\pgfplotsset{compat=1.14}
\newtheorem{theorem}{Theorem}
\newtheorem{proposition}{Proposition}
\newcommand{\longweak}{\relbar\joinrel\rightharpoonup}
\newcommand{\eps}{\varepsilon}
\newcommand{\R}{\mathbb{R}}
\DeclareMathOperator*{\esssup}{ess\,sup}
\numberwithin{equation}{section}
\title[Well-posedness of the Blackstock equation] {Well-posedness and numerical treatment of the Blackstock equation in nonlinear acoustics}
\author[Marvin Fritz, Vanja Nikoli\' c, and Barbara Wohlmuth]{}
\subjclass{Primary: 35; Secondary: 35L70}
 \keywords{nonlinear wave equation, well-posedness, energy decay, nonlinear acoustics.}
\email{marvin.fritz@ma.tum.de}
\email{vanja.nikolic@ma.tum.de}
\email{wohlmuth@ma.tum.de}
\crefname{section}{§}{§§}
\Crefname{section}{§}{§§}
\thanks{$^*$ Corresponding author: Marvin Fritz, \url{marvin.fritz@ma.tum.de}}
\begin{document} 
\maketitle
\centerline{\scshape Marvin Fritz$^*$, Vanja Nikoli\' c, and Barbara Wohlmuth} 
\medskip
{\footnotesize
   \centerline{Technical University of Munich, Department of Mathematics, Chair of Numerical Mathematics}
   \centerline{Boltzmannstra\ss e 3, 85748 Garching, Germany}
} 
\vspace{12mm}

\begin{abstract}
We study the Blackstock equation which models the propagation of nonlinear sound waves through dissipative fluids.  Global well-posedness of the model with homogeneous Dirichlet boundary conditions is shown for small initial data. To this end, we employ a fixed-point technique coupled with well-posedness results for a linearized model and appropriate energy estimates. Furthermore, we obtain exponential decay for the energy of the solution. We present additionally a finite element-based method for solving the Blackstock equation and illustrate the behavior of solutions through several numerical experiments. 
\end{abstract}
\vspace{14mm}
%%%%%%%%%%%%%%%%%%%%%%%%%%%%%%%%%%% Introduction %%%%%%%%%%%%%%%%%%%%%%%%%%%%%%%%%%%%
\section{Introduction}
The goal of the present work is to provide well-posedness and a numerical study of an initial-boundary value problem for the Blackstock equation
\begin{align} \label{Blackstock_introduction}
\psi_{tt}-(c^2+B/A \, \psi_t)\Delta \psi-b \Delta \psi_t=2 \nabla \psi \cdot \nabla \psi_t,
\end{align}
which serves as a model for nonlinear ultrasound propagation in thermoviscous fluids. The equation is given in terms of the acoustic velocity potential $\psi$, with $c$ denoting the speed of sound, $b$ the sound diffusivity, and $B/A$ the parameter of nonlinearity of the medium.  \\ 
\indent Our research is motivated by many applications of high-intensity focused ultrasound (HIFU) in medicine and industry. For instance, HIFU is widely used in the noninvasive treatment of kidney stones \cite{chaussy1980extracorporeally, ikeda2006cloud, yoshizawa2009high}. In recent years, there have been many studies on the benefits of HIFU in the treatment of cancer in a number of organs, including the liver \cite{yang1991high}, prostate \cite{blana2004high, poissonnier2007control}, and brain \cite{coluccia2014first, maloney2015emerging}.  \\
\indent The equation \eqref{Blackstock_introduction} was derived by Blackstock in \cite{Blackstock} as a one-equation approximation of the compressible Navier-Stokes system. Since it later appeared independently in the works of Crighton \cite{crighton1979model} as well as Lesser and Sebass \cite{lesser1968structure}, it is also referred to as the Blackstock-Lesser-Seebass-Crighton equation; see \cite{jordan2012propagation}. In a study performed in $1$D \cite{ChristovJordan}, this equation was shown to be the most consistent one among the weakly nonlinear acoustic models in the lossless case $b\to 0$. Furthermore, it was shown in \cite{christov2016acoustic} that the Blackstock equation agrees to a large extent with the exact result based on the fully nonlinear theory in the small Mach number limit. Nevertheless, the Blackstock equation has received less attention in the mathematical literature compared to the well-studied Kuznetsov and the Westervelt equation. \\
\indent In \cite{MizohataUkai}, Mizohata and Ukai investigated the Kuznetsov equation in the potential formulation and showed global well-posedness with Dirichlet boundary conditions for small initial data. Kaltenbacher and Lasiecka later considered the pressure-velocity formulation of the Kuznetsov equation \cite{KaltenbacherLasiecka_Kuznetsov} and the pressure formulation of the  Westervelt equation \cite{KaltenbacherLasiecka_Westervelt, KaltenbacherLasiecka_both} with different boundary conditions and showed well-posedness together with the energy decay. Meyer and Wilke  generalized and improved their results \cite{meyer2011optimal, meyer2012global} by employing a maximal $L^p$-regularity approach. Recently in \cite{Tani}, Tani studied the Cauchy  problem in $\mathbb{R}^3$ for a mathematically more general model than the Blackstock equation and showed global-in-time existence. The model studied in \cite{Tani} is also referred to in the literature as the 
Rasmussen-S\o rensen-Gaididei-Christiansen equation; see \cite{christov2016acoustic, rassmusen2011interacting}. The Blackstock equation can fit into the framework of a general evolution model studied in \cite{ebihara1979some}, where existence and uniqueness results are provided for very regular initial data.  \\
\indent In the present work, we study the Blackstock equation with homogeneous Dirichlet boundary conditions and initial data in $H_0^1 \cap H^3 \times H_0^1 \cap H^2$ and provide results on local well-posedness, global well-posedness, and exponential decay rates for the energy of solutions. To obtain local well-posedness, we employ a fixed point approach, relying on the well-posedness results for a linearized equation. Global well-posedness follows from appropriate energy estimates. Furthermore, we present a finite element-based numerical treatment of the model with different boundary conditions. \\ 
\indent The rest of the paper is organized as follows. In Section \ref{section:problem_setting}, we lay out the problem, set up the notation, and present important theoretical results for future use. Section \ref{section:linearized_Blackstock} is devoted to the well-posedness results for a linearization of the Blackstock equation. In Section \ref{section:local_wellposedness}, we tackle the local well-posedness of the initial-boundary value problem for the Blackstock equation. Section \ref{section:global_wellposedness} is concerned with the global well-posedness and energy decay. In Section \ref{section:numerical_treatment}, we present the numerical solver for the Blackstock equation. Finally, Section \ref{section:numerical_experiments} contains several numerical experiments that illustrate the behavior of the model. 
%%%%%%%%%%%%%%%%%%%%%%%%%%%%%%%%%%% Problem Setting %%%%%%%%%%%%%%%%%%%%%%%%%%%%%%%%%%%%%
\section{Problem setting} \label{section:problem_setting}
%%%%%%%%%%%%%%%%%%%%%%%%%%%%%%%%%%% Models %%%%%%%%%%%%%%%%%%%%%%%%%%%%%%%%%%%%%
There are many nonlinear acoustic models in the literature that serve as approximations of the compressible Navier-Stokes system; we refer the interested reader to the survey \cite{jordan2016survey}. One of the most popular models is the Kuznetsov equation
\begin{align} \label{Kuznetsov_eq}
(1-B/A c^{-2}\psi_t)\psi_{tt}-c^2\Delta \psi-b \Delta \psi_t=2 \nabla \psi \cdot \nabla \psi_t.
\end{align}
The acoustic velocity potential $\psi$ and the acoustic particle velocity $v$ are related by $v=-\nabla \psi$. By employing the approximation $\nabla \psi \cdot \nabla \psi_t \approx c^{-2} \psi_t \psi_{tt}$, we can transform \eqref{Kuznetsov_eq} into the Westervelt equation
\begin{align} \label{Westervelt_eq}
(1-(B/A+2)c^{-2}\psi_t)\psi_{tt}-c^2\Delta \psi-b \Delta \psi_t=0.
\end{align}
We can similarly obtain the Kuznetsov equation from the Blackstock equation \eqref{Blackstock_introduction} by making use of the approximation $\psi_t \Delta \psi \approx c^{-2} \psi_t \psi_{tt}$. The acoustic pressure $u$ and the acoustic velocity potential are related by $u \approx \varrho \psi_t$, where $\varrho$ is the mass density of the medium. \\
\indent In \cite{ChristovJordan, ChristovJordan_Corrigendum}, the three models were compared in $1$D in the inviscid case where $b=0$. The equations were reformulated as first-order systems and compared to the Euler equations, the system corresponding to the Blackstock equation performed the best; see \cite[Figures 1-5]{ChristovJordan}. A difference in the pressure profiles obtained by using the three different models can be observed in our numerical experiments in Section~\ref{section:numerical_experiments}; see Figure~\ref{fig:channel_comparison_Blackstock_Kuznetsov_Westervelt}.
\subsection{The initial-boundary value problem for the Blackstock equation} Let $\Omega$ be a bounded, $C^{1,1}$ regular domain in $\mathbb{R}^d$, where $d \in \{1,2,3\}$, and let $T>0$. We study the following initial-boundary value problem for the Blackstock equation 
\begin{align} \label{Blackstock}
\begin{cases}
\psi_{tt}-c^2(1+k \psi_t)\Delta \psi-b \Delta \psi_t=2 \nabla \psi \cdot \nabla \psi_t \quad \text{in } \Omega \times (0,T),
\smallskip\\ \psi=0 \quad \text{ on }  \partial \Omega \times (0,T),  
\smallskip\\ (\psi, \psi_t)=(\psi_0, \psi_1) \quad \text{ on }   \Omega \times \{t=0\},
\end{cases}
\end{align}
where $k=c^{-2}B/A$. Since the sign of the constant $k$ does not play a significant role when proving well-posedness, we carry out the analysis of the model with the assumption that $k \in \mathbb{R}$.\\
\indent An important task in the mathematical analysis of the Kuznetsov and the Westervelt equations is to avoid degeneracy \cite{KaltenbacherLasiecka_Westervelt, KaltenbacherLasiecka_both, KaltenbacherLasiecka_Kuznetsov}, which can occur if the factor next to $\psi_{tt}$ vanishes. \\
\indent In the well-posedness results for the Blackstock equation, we do not require $1+k \psi_t$ to be positive almost everywhere, only almost everywhere bounded. In particular, the Blackstock equation is allowed to degenerate to
\begin{align} \label{degenerate_eq}
\psi_{tt}-b\Delta \psi_t = 2 \nabla \psi \cdot \nabla \psi_t,
\end{align}
which can be rewritten as $\partial_t(\psi_t-b\Delta \psi-|\nabla \psi|^2)=0$. We show that $\psi_t$ is almost everywhere bounded by first deriving a bound in $L^\infty(0,T; H^2)$ and then by employing the embedding $H^2 \hookrightarrow L^\infty$.
% If $\psi_t=-1/k$, $k\neq 0$, the Blackstock equation degenerates to the equation
%\begin{align} \label{degenerate_eq}
%\psi_{tt}-b\Delta \psi_t = 2 \nabla \psi \cdot \nabla \psi_t,
%\end{align}
%which can be rewritten as $\partial_t(\psi_t-b\Delta \psi-|\nabla \psi|^2)=0$. 
% During the course of our proofs we derive a bound on 
%$\|\psi_t\|_{L^\infty(\Omega \times (0,T))}$ to ensure that $c^2(1+k\psi_t)$ is almost everywhere bounded. Such estimate arises from bounding $\|\psi_t\|_{L^\infty(\Omega \times (0,T))}$ by the initial conditions in the appropriately chosen norm.
%%%%%%%%%%%%%%%%%%%%%%%%%%%%%%%%%%% Notation and Preliminaries %%%%%%%%%%%%%%%%%%%%%%%%%%%%%%%%%%%%%
\subsection{Notation} \label{section:notation} Before proceeding further, let us briefly set the notation. We often omit the domain when denoting a Banach space and only write $L^p$, $H^m$, $W^{m,p}$. However, in case of $d$-dimensional vector functions in one of these spaces, we write $L^p(\Omega,\mathbb{R}^d)$, $H^m(\Omega,\mathbb{R}^d)$, $W^{m,p}(\Omega,\mathbb{R}^d)$. By a slight abuse of notation, we don't make this distinction when writing the norm and always use $|\cdot|_{L^p}$, $|\cdot|_{H^m}$, $|\cdot|_{W^{m,p}}$ . \\
\indent For a given Banach space $X$, we equip the Bochner space
$$L^p(0,T;X) =\{\, u:(0,T) \to X ~:~ u \text{ Bochner measurable, } \int_0^T |u(t)|_X^p \, \textup{d}t < \infty \, \},$$
where $1 \leq p < \infty$, with the norm $\|u\|_{L^p X}^p = \int_0^T |u(t)|_X^p \, \textup{d}t$. For $p=\infty$, we modify it standardly and define the norm in $L^\infty(0,T;X)$ as $\|u\|_{L^\infty X}=\esssup_{t \in (0,T)} |u(t)|_X.$ \\
\indent Furthermore, for given Banach spaces $X$ and $Y$, we equip the Sobolev-Bochner space
$$W^{1,p,q}(0,T;X,Y)=\{\, u \in L^p(0,T;X): u_t \in L^q(0,T;Y) \, \}, \quad 1 \leq p, q \leq \infty,$$
with the norm $\|u\|_{W^{1,p,q}XY}=\|u\|_{L^p X} + \|u_t\|_{L^q X}$. For short-hand notation when $p=q$ and $X=Y$ we use
$$W^{1,p}(0,T;X)=W^{1,p,p}(0,T;X,X).$$  
\indent Throughout the paper $C$ stands for a generic positive constant and $x \lesssim y$ stands for $x \leq C y$. 
\subsection{Theoretical preliminaries} We collect here some theoretical results which we often use in our proofs. We frequently employ Young's $\varepsilon$-inequality \cite[Appendix B]{Evans}
\begin{align} \label{Young_inequality}
    xy \leq \varepsilon x^p+C(\varepsilon)y^q, \quad \text{where} \ x,y>0, \, \varepsilon>0, \, 1<p,q<\infty, \ \frac{1}{p}+\frac{1}{q}=1,
\end{align}
and $C(\varepsilon)=(\varepsilon p)^{-q/p}q^{-1}$, as well as the following two special cases of the Gagliardo-Nirenberg inequality \cite[Theorem 1.24]{Roubicek}
\begin{equation}
\begin{aligned} \label{GN_inequality}
 |f|_{L^4} &\leq K_{\text{GN}}\, |f|_{L^2}^{1-d/4} |\nabla f|_{L^2}^{d/4}, &&\quad \text{for} \ f \in H^1_0, \ K_\text{GN}<\infty, \\
 |\nabla f|_{L^4} &\lesssim  K_{\text{GN}}\, |\nabla f|_{L^2}^{1-d/4} |\Delta f|_{L^2}^{d/4}, &&\quad \text{for} \ f \in H^1_0 \cap H^2. 
\end{aligned}
\end{equation}
Let $u$ and $v$ be non-negative continuous functions and $C_1,C_2<\infty$ non-negative constants such that
$$u(t) + v(t) \leq C_1+C_2 \int_0^t u(s) \, \textup{d}s \quad \text{ for all } t\in [0,T].$$ Then the following modification of Gronwall's inequality holds \cite[Lemma 3.1]{garcke2017well}
\begin{equation} u(t) + v(t) \leq C_1 e^{C_2 T} \quad \text{ for all } t\in[0,T].
\label{Gronwall_inequality}
\end{equation}
\indent In our well-posedness proofs, we rely heavily on different Sobolev embeddings. For future use we introduce here the embedding constants $K_1, \ldots, K_5<\infty$ by
\begin{equation} \label{embed_constants}
\begin{aligned}
|f|_{L^4} &\leq K_1 |\nabla f|_{L^2}, && \quad f \in H_0^1 \hookrightarrow L^4, \\
|\nabla f|_{L^4} &\leq K_2 |\Delta f|_{L^2}, &&\quad f\in H_0^1\cap H^2 \hookrightarrow W_0^{1,4}, \\
|f |_{L^\infty} &\leq K_3 |\nabla f|_{L^4}, &&\quad f \in W_0^{1,4} \hookrightarrow L^\infty, \\
|\nabla f|_{L^\infty} &\leq K_4 |\Delta f|_{L^4}, &&\quad f \in H_0^1\cap W^{2,4} \hookrightarrow W_0^{1,\infty}, \\[-.1cm]
|\Delta f|_{L^4}&\leq K_5 \big( |\Delta f|_{L^2}^2+|\nabla \Delta f|_{L^2}^2\big)^{1/2},  &&\quad f \in H_0^1 \cap H^3 \hookrightarrow H_0^1 \cap W^{2,4}, %\\
%|f|_{L^2} &\leq K_{\text{P}} |\nabla f|_{L^2}, &&\quad f \in H_0^1,
\end{aligned}
\end{equation}
recalling that we assumed that the dimension $d\leq 3$. We also introduce the Poincar\'e constant $K_{\text{P}}<\infty$, where
\begin{equation} \label{Poincare_constant}
\begin{aligned}
|f|_{L^2} &\leq K_{\text{P}} |\nabla f|_{L^2}, &&\quad f \in H_0^1.
\end{aligned}
\end{equation}
Furthermore, we often employ the compact embedding \cite[Corollary 4]{simon1986compact}
\begin{align*}
W^{1,\infty,2}(0,T;X,Z) \hookrightarrow \hookrightarrow C([0,T];Y),   
\end{align*}
for a given Gelfand triple $X \hookrightarrow \hookrightarrow Y \hookrightarrow Z$, as well as the the continuous embedding \cite[Ch.1, Theorem 3.1]{lions1972non}
\begin{align*}
W^{1,2,2}(0,T;X,Y) \hookrightarrow C([0,T];[X,Y]_{1/2}),
\end{align*}
for $X \hookrightarrow Y$. Here $[X,Y]_{1/2}$ denotes the interpolation space between $X$ and $Y$; see \cite[Ch.1, Definition 2.1]{lions1972non} for a  precise definition. 
%%%%%%%%%%%%%%%%%%%%%%%%%%%%%%%%%%% Linearized Blackstock %%%%%%%%%%%%%%%%%%%%%%%%%%%%%%%%%%%%
\section{Results for the linearized Blackstock equation} \label{section:linearized_Blackstock}
We begin by considering an initial-boundary value problem for a linearization of the Blackstock equation \eqref{Blackstock} which has space-time variable coefficients
\begin{align} \label{IBVP_linearizedBlackstock}
\begin{cases}
\psi_{tt}-\alpha(x,t)\Delta \psi-b \Delta \psi_t=\beta(x,t) \cdot \nabla \psi_t+f(x,t) \quad \text{in } \Omega \times (0,T),
\smallskip \\ \psi=0 \quad \text{ on }  \partial \Omega \times (0,T),  
\smallskip \\ (\psi, \psi_t)=(\psi_0, \psi_1) \quad \text{ on }   \Omega \times \{t=0\}. 
\end{cases}
\end{align}
Here $\alpha$ and $f$ are scalar functions and $\beta$ is a vector-valued function. We will specify their regularity in the upcoming propositions. A result on well-posedness of \eqref{IBVP_linearizedBlackstock} when $\beta=0$ can be found in \cite[Section 7.2]{kaltenbacher2014efficient} with stronger assumptions on $\alpha$ than Proposition~\ref{Proposition1} requires: $\alpha \in W^{1,1}(0,T; L^\infty(\Omega))$, $\alpha$ almost everywhere bounded from below by a positive constant, and $\|\alpha_t\|_{L^2 L^\infty}$ sufficiently small. \\
\indent We refer to the partial differential equation in \eqref{IBVP_linearizedBlackstock} as the linearized Blackstock equation. After proving well-posedness for this linear model, we insert $\alpha=c^2(1+kv_t)$, $\beta=2 \nabla v$, $f=0$ and define the operator $\mathcal{F}: v \mapsto \psi$ on which we employ a fixed-point theorem. Note that the model \eqref{IBVP_linearizedBlackstock} is more general than we immediately need for the fixed-point technique. However, the function $f$ comes into play when we set out to prove the contraction property of $\mathcal{F}$. \\
\indent We prove two well-posedness results for the linearized Blackstock equation. For the well-posedness of the nonlinear model in a general setting, Proposition \ref{Proposition2} is the relevant one. In the one-dimensional case the conclusion of Proposition \ref{Proposition1} is sufficient to show well-posedness. 
%%%%%%%%%%%%%%%%%%%%%%%%%%%%%%%%%%%%%% Proposition 1 %%%%%%%%%%%%%%%%%%%%%%%%%%%%%%%%
\subsection{Well-posedness of the linear model} We first show existence of a unique solution of the initial-boundary value problem \eqref{IBVP_linearizedBlackstock} when the initial data belongs to $(H_0^1 \cap H^2,H_0^1)$.
\begin{proposition} \label{Proposition1}
	Let $\Omega \subset \mathbb{R}^d$, where $d \in \{1,2,3\}$, be a bounded, open, and $C^{1,1}$ regular domain. Let $b>0$ and let the following regularity assumptions hold
	\begin{itemize}
	    \item $\alpha \in L^\infty(\Omega \times (0,T))$, \smallskip
	  %  \item there exist $\underline{\alpha}_0, \overline{\alpha}$ such that $0 \leq \underline{\alpha}_0 \leq \alpha (x,t) \leq \overline{\alpha}_0$ a.e. in $\Omega \times (0,T)$, \smallskip 
	    \item $\beta \in L^\infty(0,T; L^4(\Omega,\R^d))$, \smallskip
	    \item $f \in L^2(0,T;L^2)$, \smallskip 
	    \item $(\psi_0,\psi_1) \in (H_0^1 \cap H^2,H_0^1).$
	\end{itemize}
    Then for every $T>0$, the initial-boundary value problem \eqref{IBVP_linearizedBlackstock} for the linearized Blackstock equation admits a unique solution $\psi$ in the $L^2(0,T;L^2)$ sense that satisfies
	\begin{align*}
	\begin{cases}
	\psi \in C([0,T];H_0^1\cap H^2), \smallskip \\
	\psi_t \in C([0,T];H_0^1) \cap  L^2(0,T;H^2), \smallskip \\
	\psi_{tt} \in L^2(0,T;L^2).
	\end{cases}
	\end{align*}
Furthermore, the following energy estimate holds
\begin{equation} \label{Prop1Energy}
	\begin{aligned}
	& \| \psi \|^2_{L^\infty H^2} +\| \psi_t \|^2_{L^\infty H^1}+\| \psi_t\|^2_{L^2H^2}  +\|\psi_{tt}\|_{L^2L^2}^2\\
	\leq& \, C_{\textup{P}_1} (T,\alpha,\beta)\big( |\psi_0|_{H^2}^2+|\psi_1|_{H^1}^2 +\|f\|_{L^2L^2}^2\big).
	\end{aligned}
	\end{equation}
The constant above is given by
\begin{equation*} %\label{CP1}
\begin{aligned}
C_{\text{P}_1}(T,\alpha,\beta) = C \exp\Big( C T \big(1+\|\alpha\|_{L^\infty(\Omega \times (0,T))}+\|\alpha\|_{L^\infty(\Omega \times (0,T))}^2 + \|\beta\|_{L^\infty L^4}^2 + \|\beta\|_{L^\infty L^4}^\frac{2}{1-d/4}\big) \Big) .
 \end{aligned}
 \end{equation*}
\end{proposition}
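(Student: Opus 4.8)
The plan is to construct the solution by a Faedo--Galerkin scheme and to control it through a single higher-order energy estimate dictated by the strong damping $-b\Delta\psi_t$. Let $\{w_j\}_{j\geq 1}$ be the eigenfunctions of $-\Delta$ under homogeneous Dirichlet conditions, which form an orthogonal basis of $H_0^1\cap H^2$ and satisfy $-\Delta w_j=\lambda_j w_j$. I would seek $\psi_n(t)=\sum_{j=1}^n d_j^n(t)w_j$ solving the equation in \eqref{IBVP_linearizedBlackstock} tested against $w_1,\dots,w_n$, with $\psi_n(0)\to\psi_0$ in $H_0^1\cap H^2$ and $\psi_{n,t}(0)\to\psi_1$ in $H_0^1$. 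Since $\alpha,\beta,f$ enter only as time-measurable, integrable coefficients of a linear system for the vector $d^n$, Carath\'eodory's theorem yields a local (and, once the a priori bound below is available, global) absolutely continuous solution. The decisive feature of this basis is that $-\Delta\psi_{n,t}=\sum_j\lambda_j d_{j,t}^n w_j$ again lies in the span of $\{w_1,\dots,w_n\}$, so it is an admissible test function.

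The heart of the proof is a uniform-in-$n$ energy estimate, and this is where I expect the main difficulty. Testing with $-\Delta\psi_{n,t}$ produces
\[
\frac{d}{dt}\tfrac12|\nabla\psi_{n,t}|_{L^2}^2+b|\Delta\psi_{n,t}|_{L^2}^2=\int_\Omega\alpha\Delta\psi_n\Delta\psi_{n,t}-\int_\Omega(\beta\cdot\nabla\psi_{n,t})\Delta\psi_{n,t}-\int_\Omega f\Delta\psi_{n,t}.
\]
The subtle point is that $\alpha$ is merely bounded, with no time regularity, so one cannot integrate the first term by parts in time to form $\tfrac{d}{dt}\int_\Omega\alpha|\Delta\psi_n|^2$; instead both the elliptic and the convective terms must be absorbed directly into the dissipation $b|\Delta\psi_{n,t}|_{L^2}^2$. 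For the elliptic term I would use Young's inequality \eqref{Young_inequality} to obtain $|\alpha|_{L^\infty}|\Delta\psi_n|_{L^2}|\Delta\psi_{n,t}|_{L^2}\leq\tfrac{b}{6}|\Delta\psi_{n,t}|_{L^2}^2+C\|\alpha\|_{L^\infty}^2|\Delta\psi_n|_{L^2}^2$. For the convective term the Gagliardo--Nirenberg inequality \eqref{GN_inequality} gives $|\nabla\psi_{n,t}|_{L^4}\lesssim|\nabla\psi_{n,t}|_{L^2}^{1-d/4}|\Delta\psi_{n,t}|_{L^2}^{d/4}$, so that by H\"older's inequality
\[
\Big|\int_\Omega(\beta\cdot\nabla\psi_{n,t})\Delta\psi_{n,t}\Big|\lesssim\|\beta\|_{L^\infty L^4}\,|\nabla\psi_{n,t}|_{L^2}^{1-d/4}\,|\Delta\psi_{n,t}|_{L^2}^{1+d/4}.
\]
Since $1+d/4<2$ for $d\leq 3$, Young's inequality with the conjugate exponents $\tfrac{8}{4+d},\tfrac{8}{4-d}$ absorbs a factor $\tfrac{b}{6}|\Delta\psi_{n,t}|_{L^2}^2$ and leaves precisely the term $C\|\beta\|_{L^\infty L^4}^{2/(1-d/4)}|\nabla\psi_{n,t}|_{L^2}^2$ that appears in $C_{\mathrm{P}_1}$; the exact Gagliardo--Nirenberg exponent is essential here, as it is what renders the residual power of $|\Delta\psi_{n,t}|_{L^2}$ subquadratic.

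To close the estimate I would add a small multiple of $\tfrac{d}{dt}|\Delta\psi_n|_{L^2}^2=2\int_\Omega\Delta\psi_n\Delta\psi_{n,t}$ (again absorbing $|\Delta\psi_{n,t}|_{L^2}$ into the damping) and work with $E_n:=\tfrac12|\nabla\psi_{n,t}|_{L^2}^2+\tfrac{\lambda}{2}|\Delta\psi_n|_{L^2}^2$ for suitably small $\lambda>0$; the remaining terms are bounded by $C_1E_n+C|f|_{L^2}^2$, with $C_1$ of the form displayed in $C_{\mathrm{P}_1}$, and Gronwall's inequality \eqref{Gronwall_inequality} yields the $L^\infty$-in-time bounds on $|\nabla\psi_{n,t}|_{L^2}$ and $|\Delta\psi_n|_{L^2}$. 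Integrating the dissipation then controls $\|\psi_{n,t}\|_{L^2H^2}$ (using elliptic regularity on the $C^{1,1}$ domain), reading $\psi_{n,tt}$ off the equation and using \eqref{GN_inequality} once more bounds $\|\psi_{n,tt}\|_{L^2L^2}$, and Poincar\'e \eqref{Poincare_constant} together with elliptic regularity upgrades these to all the norms in \eqref{Prop1Energy}. With these uniform bounds I would extract weak-$*$ limits $\psi_n\overset{*}{\rightharpoonup}\psi$ in $L^\infty(0,T;H_0^1\cap H^2)$, $\psi_{n,t}\overset{*}{\rightharpoonup}\psi_t$ in $L^\infty(0,T;H_0^1)\cap L^2(0,T;H^2)$, and $\psi_{n,tt}\rightharpoonup\psi_{tt}$ in $L^2(0,T;L^2)$, and pass to the limit in the weak formulation. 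The products $\alpha\Delta\psi_n$ and $\beta\cdot\nabla\psi_{n,t}$ converge because $\alpha\in L^\infty$ and $\beta w_j\in L^\infty(0,T;L^4)\hookrightarrow L^2(0,T;L^2)$ pair continuously against the weakly convergent sequences $\Delta\psi_n$ and $\nabla\psi_{n,t}$. Temporal continuity $\psi\in C([0,T];H_0^1\cap H^2)$ and $\psi_t\in C([0,T];H_0^1)$ follows from the embedding $W^{1,2,2}(0,T;X,Y)\hookrightarrow C([0,T];[X,Y]_{1/2})$ quoted in the preliminaries, applied with $(X,Y)=(H^2,H^2)$ and $(H^2,L^2)$.

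Finally, for uniqueness, if $\psi^1,\psi^2$ are two solutions in the stated class, their difference $\phi$ solves \eqref{IBVP_linearizedBlackstock} with $f=0$ and zero initial data, and I would rerun the identical energy computation on $\phi$. The only point needing care is the identity $\tfrac{d}{dt}|\nabla\phi_t|_{L^2}^2=2\langle\phi_{tt},-\Delta\phi_t\rangle$, which is legitimate at the limiting regularity because it is exactly the Lions formula for the Gelfand triple $H_0^1\cap H^2\hookrightarrow H_0^1\hookrightarrow(H_0^1\cap H^2)'$, into whose dual $\phi_{tt}\in L^2(0,T;L^2)$ embeds via $v\mapsto\int_\Omega\phi_{tt}(-\Delta v)$. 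Since $\phi$ carries zero data and $f=0$, Gronwall's inequality \eqref{Gronwall_inequality} forces the corresponding energy to vanish identically, whence $\phi=0$.
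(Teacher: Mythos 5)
Your proposal is correct and follows the paper's overall Faedo--Galerkin framework (spectral basis, Carath\'eodory existence for the ODE system, Gronwall, weak-$*$ limits, and the same interpolation embedding for continuity in time), but the heart of the argument --- the uniform energy estimate --- is organized in a genuinely leaner way. The paper tests the Galerkin system with four functions, $\psi_t^n$, $-\Delta\psi^n$, $\psi_{tt}^n$, and $-\Delta\psi_t^n$, and sums the four resulting bounds \eqref{Prop1Energy1}--\eqref{Prop1Energy4}; in particular $\|\psi_{tt}^n\|_{L^2L^2}$ is obtained by testing with $\psi_{tt}^n$. You instead close everything from the single test function $-\Delta\psi_{n,t}$, recover control of $|\Delta\psi_n|_{L^2}$ by adding the auxiliary term $\lambda\,\tfrac{\textup{d}}{\textup{d}t}|\Delta\psi_n|_{L^2}^2$ to the energy (note that any fixed $\lambda>0$ works here; smallness is not needed, only that $\lambda$ is a constant), and read $\psi_{n,tt}$ off the equation afterwards. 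The last step is legitimate because the Galerkin equation gives $\psi_{n,tt}$ as the $L^2$-orthogonal projection onto $V_n$ of $\alpha\Delta\psi_n+b\Delta\psi_{n,t}+\beta\cdot\nabla\psi_{n,t}+f$, and that projection is nonexpansive in $L^2$. Both routes rest on the same two absorption mechanisms (Young for the $\alpha$-term; Gagliardo--Nirenberg plus Young with conjugate exponents $\tfrac{8}{4+d}$, $\tfrac{8}{4-d}$ for the $\beta$-term), and your Gronwall rate is dominated by the stated constant, so \eqref{Prop1Energy} with $C_{\textup{P}_1}$ follows a fortiori. Your uniqueness argument is, if anything, more careful than the paper's: the paper inserts the difference of two solutions into \eqref{Prop1Energy}, tacitly assuming the estimate holds for every solution in the regularity class, whereas you rerun the computation at the limiting regularity and justify $\tfrac{\textup{d}}{\textup{d}t}|\nabla\phi_t|_{L^2}^2=2\int_\Omega \phi_{tt}(-\Delta\phi_t)\,\textup{d}x$ by Lions' lemma for the triple $H_0^1\cap H^2\hookrightarrow H_0^1\hookrightarrow (H_0^1\cap H^2)'$, which is exactly the right justification.

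The one step you pass over is the attainment of the initial data by the limit: passing to the limit against $\eta\in C_c^\infty(0,T)$ identifies the equation for $\psi$ but says nothing about $\psi(0)$ and $\psi_t(0)$. The paper settles this with the Aubin--Lions--Simon compact embeddings, which upgrade the weak-$*$ convergence to strong convergence in $C([0,T];H_0^1)$ and $C([0,T];L^2)$, so that $\psi^n(0)\to\psi(0)$ and $\psi_t^n(0)\to\psi_t(0)$ can be matched with the convergent Galerkin data. In your setting the same conclusion follows without any compactness: evaluation at $t=0$ is a bounded linear, hence weakly continuous, map on $H^1(0,T;H_0^1)$ and on $W^{1,2,2}(0,T;H_0^1\cap H^2,L^2)$, so the uniform bounds give $\psi^n(0)\rightharpoonup\psi(0)$ and $\psi_t^n(0)\rightharpoonup\psi_t(0)$, while $\psi_0^n\to\psi_0$ and $\psi_1^n\to\psi_1$ strongly by construction. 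This is routine, but it should be stated to complete the existence part of the proof.
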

\begin{proof}
\noindent We employ the Faedo-Galerkin approach \cite{Evans, Roubicek, Temam} to show well-posedness, where we approximate our problem in space and then show that a sequence of solutions of the approximate problems converges to a solution of the original problem \eqref{IBVP_linearizedBlackstock}. \\
\indent Note that coefficient $\alpha$ does not have to be positive in \eqref{IBVP_linearizedBlackstock}. We also remark that we assumed that $\alpha \in L^\infty(\Omega \times (0,T))$, since it is more general than assuming $\alpha$ to be in $L^\infty(0,T;L^\infty)$. An example of a function which is in $L^\infty(\Omega \times (0,T))$, but not in $L^\infty(0,T;L^\infty)$ can be found in \cite[Example 1.42]{Roubicek}. \medskip \\
\noindent \textbf{Discretization in space.}
To discretize our problem in space, we select smooth functions $\{w_k\}_{k \in \mathbb{N}}$ which form an orthogonal basis of $H_0^1(\Omega)\cap H^2(\Omega)$. For our analysis it is convenient to choose the eigenfunctions of the Laplace-Dirichlet problem 
$$\begin{cases} -\Delta w_k = \lambda_k w_k, \\
w_k|_{\partial \Omega}=0, 
\end{cases}$$
where $k \in \mathbb{N}$. Then the basis is additionally orthonormal in $L^2(\Omega)$; see \cite[Ch.1, Section 1.7]{lions1969quelques}. For a fixed $n \in \mathbb{N}$, we denote by 
\begin{align*}
V_n=\text{span}\,  \{w_1, \ldots, w_n\}
\end{align*}
the finite-dimensional subspace of $H_0^1(\Omega) \cap H^2(\Omega)$ spanned by the first $n$ vectors of the basis. We then consider Galerkin approximations
\begin{align*}
& \psi^n(x,t)=\displaystyle \sum_{i=1}^n \xi_i(t) w_i(x), 
\end{align*}
where $\xi_i: (0,T) \to \mathbb{R}$ are coefficient functions for $i\in[1,n]$. We approximate the initial data by
\begin{align*}
& \psi^n_0(x)=\displaystyle \sum_{i=1}^n \xi_{i, 0}\, w_i(x),  \\
& \psi^n_1(x)=\displaystyle \sum_{i=1}^n \xi_{i,1} \, w_i(x).
\end{align*}
Here coefficients $\xi_{i,0}, \xi_{i,1} \in \mathbb{R}$ are conveniently chosen as
\begin{equation*}
\begin{aligned}
    \xi_{i,0} &=(\psi_0,w_i)_{L^2}, \\
    \xi_{i,1} &=(\psi_1,w_i)_{L^2},
    \end{aligned}
\end{equation*}
for $i \in [1,n]$. In this way it follows by construction that 
\begin{equation}  \label{Prop1GalerkinIC}
\begin{aligned}
  \|\psi_0^n\|_{H^2} &\leq \|\psi_0\|_{H^2} &&\text{and} &&\psi_0^n \longrightarrow \psi_0  \text{ in } H_0^1\cap H^2, \\
  \|\psi_1^n\|_{H^1} &\leq \|\psi_1\|_{H^1} &&\text{and} &&\psi_1^n \longrightarrow \psi_1 \text{ in } H_0^1;
\end{aligned}
\end{equation}
see \cite[Lemma 7.5]{robinson2001infinite}. We will use these bounds later to derive energy estimates that are uniform with respect to $n$. Now we can consider the following approximation of our original problem
\begin{equation} \label{Prop1Galerkin}
\begin{aligned}
\begin{cases}
( \psi^n_{tt} , v )_{L^2} - (\alpha \Delta \psi^n,v)_{L^2}-b(\Delta \psi_t^n,v)_{L^2}= (\beta \cdot \nabla \psi_t^n,v )_{L^2}+(f,v)_{L^2}, \smallskip \\
\text{for every $v \in V_n$ pointwise a.e. in $(0,T)$}, \smallskip \\
\psi^n(0)=\psi_0^n, \ \psi^n_t(0)=\psi_1^n.
\end{cases}
\end{aligned}
\end{equation}
We can rewrite this approximated problem as a system of $n$ ordinary differential equations for the coefficient functions $\xi_i$, where $i\in[1, n]$. First we note that \eqref{Prop1Galerkin} is equivalent to 
\begin{equation} \label{Prop1GalerkinTime}
\begin{cases}
\displaystyle \sum_{i=1}^n \int_{\Omega}\Bigl ((\xi_i)_{tt} \, w_i w_j -\xi_i \, \alpha \Delta w_i w_j- (\xi_i)_t \, b \Delta w_i  w_j -(\xi_i)_t \, \beta \cdot \nabla w_i w_j  \Bigr)\, \textup{d}x=\int_{\Omega} f w_j \, \textup{d}x, \smallskip  \\
\xi_i(0)=\xi_{i,0}, \quad (\xi_{i})_t(0)=\xi_{i,1}, \quad j \in [1, n].
\end{cases}
\end{equation}
We then introduce matrices $M^n=[M_{ij}]$, $K^n(t)=[K_{ij}]$, $C^n(t)=[C_{ij}]$, and vector $F^n(t)=[F_j]$ whose elements are computed according to
\begin{equation*}
\begin{aligned}
& M_{ij}=(w_i, w_j)_{L^2}=\delta_{ij},\\
& K_{ij}(t)=-(\alpha \Delta w_i, w_j)_{L^2}, \\
& C_{ij}(t)=-b \, (\Delta w_i, w_j)_{L^2}-( \beta \cdot \nabla w_i, w_j)_{L^2},\\
& F_j(t)=(f, w_j)_{L^2},
\end{aligned}
\end{equation*}
for $i, j \in [1,n]$ and almost every $t \in (0,T)$. We note that $M^n$ is the identity matrix, while matrices $K^n(t)$ and $C^n(t)$ and vector $F^n(t)$ are well-defined thanks to the fact that
\begin{align*}
& (\alpha(t) \Delta w_i, w_j)_{L^2} \leq |\alpha(t)|_{L^2}  |\Delta w_i|_{L^2} | w_j|_{L^\infty}, 
\\& (f(t), w_j)_{L^2} \leq |f(t)|_{L^2} | w_j|_{L^2}, 
\\& (\beta(t) \cdot \nabla w_i, w_j)_{L^2} \leq |\beta(t)|_{L^2}  |\nabla w_i|_{L^4} | w_j|_{L^4}, 
\end{align*}
almost everywhere in time. By introducing vectors $\xi^n=[\xi_1 \ldots \xi_n]^T$, $\xi_0^n=[\xi_{1,0} \ldots \xi_{n, 0}]^T$, and $\xi_1^n=[\xi_{1,1} \ldots \xi_{n,1}]^T$, problem \eqref{Prop1GalerkinTime} can be rewritten in the form of a matrix equation
\begin{equation} \label{Prop1GalerkinMatrix_old}
\begin{cases}
\xi_{tt}^n+K^n(t)\xi^n+C^n(t) \xi_t^n=F^n(t), \\
\xi^n(0)=\xi_0^n, \quad \xi^n_t(0)=\xi_1^n.
\end{cases}
\end{equation}
With $u^n=[\xi^n \ \ \xi^n_t]^T$, $u_0^n=[\xi_0^n \ \ \xi_1^n]^T$, $A^n(t)=\begin{bmatrix}
0 & I \\ -K^n(t) & -C^n(t) \end{bmatrix}$, and $G^n=[0 \ \ F^n(t)]^T$ we can further rewrite \eqref{Prop1GalerkinMatrix_old} as
\begin{equation} \label{Prop1GalerkinODE}
\begin{cases} 
\dfrac{\textup{d}}{\textup{d}t} u^n =A^n(t) u^n+G^n(t), \\
 u^n(0)=u_0^n.
\end{cases}
\end{equation}
The existence now follows from the standard theory of ordinary differential equations. It can be seen that the right-hand side of the ODE in \eqref{Prop1GalerkinODE} $$g(t,r)=A^n(t)r+G^n(t)$$ is continuous with respect to $r$ for a fixed $t$ and it is measurable with respect to $t$ for a fixed $r$. According to Carathéodory's theorem \cite[Theorem 1.44]{Roubicek}, we have local-in-time existence of an absolutely continuous solution $u^n$ and thus $\psi^n, \psi_t^n$ on some sufficiently short interval $[0, T_n]$.  Furthermore, we can estimate the second time derivative of the solution of \eqref{Prop1GalerkinMatrix_old} in the following way
$$\begin{aligned} |\xi_{tt}^n|_{L^2(0,T_n)}^2 =& \,|-K^n\xi^n-C^n\xi^n_t+F^n|_{L^2(0,T_n;\R^d)}^2 \\
\leq& \, |\xi^n|^2_{L^\infty(0,T_n)} \sum_{i,j}^n \int_0^{T_n}  \big|(\alpha \Delta w_i, w_j)_{L^2} \big|^2 \, \textup{d}t +\|f\|_{L^2L^2}^2\\
&+|\xi_t^n|_{L^\infty(0,T_n)}^2  \sum_{i,j}^n \int_0^{T_n}  \big| (b\Delta w_i-\beta\cdot \nabla w_i, w_j)_{L^2}  \big|^2\, \textup{d}t \\
\lesssim& \,   \|\alpha\|_{L^2L^2}^2+\|f\|_{L^2L^2}^2+\|\beta\|_{L^2L^2}^2<\infty,
\end{aligned}$$
where we note that the vector norm is chosen to be the Euclidean norm. We can conclude that for every fixed $n \in \mathbb{N}$, a solution $\psi^n$ of \eqref{Prop1Galerkin} exists with the regularity $C^1([0,T_n];V_n)\cap H^2(0,T_n;V_n).$ The upcoming energy estimates allow us to extend the solution to the whole interval $[0,T]$. \medskip \\
%%%%%%%%%%%%%%%%%%%%%%%%%% ENERGY ESTIMATES %%%%%%%%%%%%%%%%%%%%%%%%%%%%%%%%%%%%%%%%%%
\noindent \textbf{Energy estimates for approximate solutions.} Next we set out to derive energy estimates for $\psi^n$ which are uniform with respect to $n$. To this end, we test our approximate problem \eqref{Prop1Galerkin} with four different test functions. To be able to later prove the well-posedness of the nonlinear model, it is important that in these estimates we track the initial data and the norms of $\alpha, \beta$, and $f$ precisely. The embedding constants are typically not singled out in the final estimates, but contained in a generic constant $C$. \medskip \\ 
%%%%%%%%%%%%%%%%%%%%%%%%%%%%%%%%%%%%%%%%%%%%%%%%%%%%%%%%%%%%%%%%%%%%%%%%%%%%%%%%%%%%%%%%%%%%
\noindent\textit{Testing with $\psi^n_t$.} Since $\psi^n_t(s) \in V_n$ for all $s \in [0,T_n]$, we are allowed to take $\psi^n_t(s)$ as a test function in \eqref{Prop1Galerkin}. We test \eqref{Prop1Galerkin} with $\psi^n_t(s)$, integrate with respect to time from $0$ to $t$, and perform integration by parts in time on the term containing the second time derivative. Together with employing H\"older's inequality, these actions yield the estimate
\begin{equation*} 
\begin{aligned}
&\frac12 | \psi^n_t(t) |^2_{L^2}+ b \| \nabla \psi_t^n \|_{L^2L^2}^2 
\\\leq& \, \frac12 |\psi_1^n|_{L^2}^2 +  \|\alpha\|_{L^\infty(\Omega \times (0,T))} \|\Delta \psi^n\|_{L^2L^2} \|\psi_t^n\|_{L^2L^2}
\\ & +\|\beta\|_{L^\infty L^4} \|\nabla \psi_t^n\|_{L^2L^2} \|\psi_t^n\|_{L^2 L^4}+ \|f\|_{L^2L^2} \|\psi^n_t\|_{L^2 L^2},
\end{aligned}
\end{equation*}
for all $t \in [0,T_n]$. We can use Poincaré's inequality to estimate $\|\psi_t^n\|_{L^2L^2}$ within the terms on the right-hand side. For the $\beta$-term, we make use of the Gagliardo-Nirenberg's inequality \eqref{GN_inequality} 
\begin{equation*} 
\begin{aligned}
& \|\beta\|_{L^\infty L^4} \|\nabla \psi_t^n\|_{L^2L^2} \|\psi_t^n\|_{L^2 L^4} \leq K_\text{GN} \| \beta\|_{L^\infty L^4} \|\nabla \psi_t^n\|_{L^2L^2}^{1+d/4} \|\psi_t^n\|_{L^2 L^2}^{1-d/4}.
\end{aligned}
\end{equation*}
By additionally employing Young's $\eps$-inequality \eqref{Young_inequality} with either $p=\frac{2}{1+d/4}, q=\frac{2}{1-d/4}$ or $p=q=2$, we arrive at
\begin{equation} \label{Prop1Energy1}
\begin{aligned}
& \frac12 | \psi^n_t(t) |^2_{L^2}+ (b-3\eps) \| \nabla \psi_t^n \|_{L^2L^2}^2 
\\ \leq& \, \frac12 |\psi_1^n|_{L^2}^2 +C\|\alpha\|_{L^\infty(\Omega \times (0,T))}^2  \| \Delta \psi^n \|_{L^2L^2}^{2} + C\|\beta\|_{L^\infty L^4}^\frac{2}{1-d/4} \| \psi^n_t \|^{2}_{L^2L^2}  + C \|f\|_{L^2L^2}^2,
\end{aligned}
\end{equation}
for all $t \in [0,T_n]$ and some conveniently chosen $\eps>0$. We note that we are still missing a bound on $\|\Delta \psi^n\|_{L^2L^2}^2$ in \eqref{Prop1Energy1}, which we get by testing our problem \eqref{Prop1Galerkin} with $-\Delta \psi^n(s)$, $s\in [0,T_n]$.\medskip \\
%%%%%%%%%%%%%%%%%%%%%%%%%%%%%%%%%%%%%%%%%%%%%%%%%%%%%%%%%%%%%%%%%%%%%%%%%%%%%%%%%%%%%%%%%%%%
\noindent \textit{Testing with $-\Delta \psi^n$.} Since $\Delta \psi^n(s) \in V_n$ for all $s \in [0,T_n]$, we indeed are allowed to employ $-\Delta \psi^n(s)$ as a test function in \eqref{Prop1Galerkin}. After integrating the resulting equation with respect to time and applying Young's $\varepsilon$-inequality, we find that
\begin{equation} \label{Prop1Energy2}
    \begin{aligned}
    & \frac{b}{2} |\Delta \psi^n(t)|^2_{L^2} 
    \\\leq & \, \frac{b}{2} |\Delta \psi_0^n|_{L^2}^2+ \eps \|\psi^n_{tt}\|_{L^2 L^2}^2 + \eps \|\Delta \psi_t^n\|_{L^2L^2}^2 + \frac{1}{2} \|f\|_{L^2L^2}^2\\
    & +C (\|\alpha\|_{L^\infty L^\infty}+\|\beta\|_{L^\infty L^4}^2+1) \|\Delta \psi^n \|_{L^2L^2}^2,
    \end{aligned} 
\end{equation}
for all $t\in [0,T_n]$ and some $\varepsilon>0$. We observe that we still need a bound on $\|\psi_{tt}^n\|_{L^2L^2}^2$ and on $\|\Delta \psi_t^n\|_{L^2L^2}^2$. Thus, we continue by testing the problem \eqref{Prop1Galerkin} first with $\psi_{tt}^n(s)$, and then with $-\Delta \psi_t^n(s), s \in [0,T_n]$.\medskip \\
%%%%%%%%%%%%%%%%%%%%%%%%%%%%%%%%%%%%%%%%%%%%%%%%%%%%%%%%%%%%%%%%%%%%%%%%%%%%%%%%%%%%%%%%%%%%
\noindent \textit{Testing with $\psi^n_{tt}$.}
Testing \eqref{Prop1Galerkin} with  $\psi^n_{tt} \in C([0,T];V_n)$, integrating with respect to time, and applying H\" older's inequality yields 
\begin{equation*}  
\begin{aligned}
&\| \psi_{tt}^n \|_{L^2L^2}^2 + \frac{b}{2} | \nabla \psi_t^n(t) |_{L^2}^2 
\\  \leq& \, \frac{b}{2} |\nabla \psi_1^n|_{L^2}^2+ \|\psi_{tt}^n\|_{L^2L^2} \big( \|\alpha\|_{L^\infty(\Omega \times (0,T))} \|\Delta \psi^n \|_{L^2L^2}  + \|\beta \|_{L^\infty L^4}\| \nabla \psi_t^n\|_{L^2L^4} + \|f \|_{L^2 L^2} \big),
\end{aligned}
\end{equation*} 
for all $t\in [0,T_n]$. Applying the Gagliardo-Nirenberg's inequality to the $\beta$-term and then Young's $\eps$-inequality then leads to
\begin{equation}  \label{Prop1Energy3}
\begin{aligned}
&(1-3\eps) \| \psi_{tt}^n \|_{L^2L^2}^2 + \frac{b}{2} | \nabla \psi_t^n(t) |_{L^2}^2 \\
\leq& \, \frac{b}{2} |\nabla \psi_1^n|_{L^2}^2+ C\|\alpha\|_{L^\infty(\Omega \times (0,T))}^2 \|\Delta \psi^n\|_{L^2L^2}^2 +\frac{1}{4\eps} \|f\|_{L^2L^2}^2 
+ \eps \|\Delta \psi_t^n\|_{L^2L^2}^2 \\
&+ C \|\beta\|_{L^\infty L^4}^\frac{2}{1-d/4} \|\nabla \psi_t^n\|_{L^2L^2}^2,
\end{aligned}
\end{equation} 
for all $t\in [0,T_n]$ and some $\eps>0$. However, we still cannot absorb the term $\|\Delta \psi_t^n\|_{L^2L^2}^2$ on the right-hand side in \eqref{Prop1Energy3} by any term on the left side of the estimates derived so far. We have to continue further by testing \eqref{Prop1Galerkin} with $-\Delta \psi^n_t(s), s\in[0,T_n]$. \medskip \\
%%%%%%%%%%%%%%%%%%%%%%%%%%%%%%%%%%%%%%%%%%%%%%%%%%%%%%%%%%%%%%%%%%%%%%%%%%%%%%%%%%%%%%%%%%%%55
\noindent \textit{Testing with $-\Delta \psi^n_{t}$.} We note that $\Delta \psi^n_t \in C([0,T_n];V_n)$ . Testing \eqref{Prop1Galerkin} with $-\Delta \psi^n_t(s) \in V_n, s\in [0,T_n]$, integrating with respect to time, and applying H\" older's inequality results in
\begin{equation*} 
\begin{aligned} & \,\frac12 |\nabla \psi_t^n(t)|_{L^2}^2+b\|\Delta \psi_t^n\|_{L^2L^2}^2 \\
 \leq& \, \frac12 |\nabla \psi_1^n|_{L^2}^2 + \|\Delta \psi_t^n\|_{L^2L^2} \big( \|\alpha\|_{L^\infty(\Omega \times (0,T))} \|\Delta \psi^n\|_{L^2L^2}  + \|\beta \|_{L^\infty L^4} \|\nabla \psi_t^n \|_{L^2L^4}  +\|f \|_{L^2 L^2} \big),\end{aligned} 
\end{equation*}
for all $t\in [0,T_n]$. As before, we apply Young's $\eps$-inequality to obtain
\begin{equation} \label{Prop1Energy4}
\begin{aligned}
& \,\frac12 |\nabla \psi_t^n(t)|_{L^2}^2+(b-3\eps)\|\Delta \psi_t^n\|_{L^2L^2}^2
\\\leq& \frac12 |\nabla \psi_1^n|_{L^2}^2  + C \|\alpha\|_{L^\infty(\Omega \times (0,T))}^2 \|\Delta \psi^n\|_{L^2L^2}^2 + \frac{1}{4\eps}\|f\|_{L^2L^2}^2 + C \|\beta\|_{L^\infty L^4}^2 \|\nabla \psi_t^n\|_{L^2L^2}^2,
\end{aligned} 
\end{equation} 
for all $t\in [0,T_n]$ and a conveniently chosen $\eps>0$. \\
\indent Now we can add the derived estimates \eqref{Prop1Energy1}-\eqref{Prop1Energy4}. Then by additionally bringing the remaining $\eps$-terms to the left-hand side we get
$$\begin{aligned}
&| \psi^n(t) |^2_{H^2} +| \psi^n_t (t)|^2_{ H^1} +\|\psi^n_t\|_{L^2H^2}^2+\|\psi^n_{tt}\|_{L^2L^2}^2 
\\ \lesssim& \,   |\psi^n_0|_{H^2}^2 +|\psi^n_1|_{H^1}^2 +\|f\|_{L^2L^2}^2 + \|\psi^n\|_{L^2H^2}^2(1+\|\alpha\|_{L^\infty(\Omega \times (0,T))}+\|\alpha\|_{L^\infty(\Omega \times (0,T))}^2 + \|\beta\|_{L^\infty L^4}^2)\\
&+\|\psi_t^n\|_{L^2 H^1}^2 \Big( \|\beta\|_{L^\infty L^4}^2+\|\beta\|_{L^\infty L^4}^\frac{2}{1-d/4}\Big),
\end{aligned}$$
for all $t \in [0,T_n]$. We then employ Gronwall's inequality \eqref{Gronwall_inequality} to find that there is a constant $C>0$, independent of $n$, such that
\begin{equation} \label{Prop1LowEnergy}
\begin{aligned} 
&\| \psi^n \|^2_{L^\infty H^2} +\| \psi^n_t \|^2_{L^\infty H^1} +\|\psi^n_t\|_{L^2H^2}^2+\|\psi^n_{tt}\|_{L^2L^2}^2 
\\\leq& \, C \exp\Big( C T \big(1+\|\alpha\|_{L^\infty(\Omega \times (0,T))}+\|\alpha\|_{L^\infty(\Omega \times (0,T))}^2 + \|\beta\|_{L^\infty L^4}^2 + \|\beta\|_{L^\infty L^4}^\frac{2}{1-d/4}\big) \Big) \\
&\, \times \big(|\psi_0|_{H^2}^2 + |\psi_1|_{H^1}^2 + \|f\|_{L^2L^2}^2\big). 
\end{aligned}
\end{equation}
Note that above we used \eqref{Prop1GalerkinIC} to uniformly bound $|\psi_0^n|_{H^2}^2$ and $|\psi_1^n|_{H^1}^2$. The estimate \eqref{Prop1LowEnergy} is independent of $T_n$ and consequently there is no blow-up in finite time, which allows us to extend the existence interval by setting $T_n=T$ for all $n\in \mathbb{N}$. \medskip \\
%%%%%%%%%%%%%%%%%%%%%%%%% Convergence of approx solutions %%%%%%%%%%%%%%%%%%%%%%%%%%%
\noindent \textbf{Convergence of approximate solutions.} Our next goal is to prove that a subsequence of $\{\psi^n\}_{n \in \mathbb{N}}$ converges to a solution of the linearized Blackstock equation. Due to the derived uniform estimate \eqref{Prop1LowEnergy} and Banach-Alaoglu's theorem, there exists a subsequence that we again denote by $\{\psi^n\}_{n \in \mathbb{N}}$ and a function $\psi: \Omega \times (0,T) \to \mathbb{R}$ such that  
\begin{alignat*}{4}
\psi^n  &\longweak \psi &&\text{ weakly-$\star$}  &&\text{ in } &&L^\infty(0,T;H_0^1\cap H^2),  \\
\psi_t^n &\longweak \psi_t &&\text{ weakly-$\star$} &&\text{ in } &&L^\infty(0,T;H^1_0),\\
\psi_t^n &\longweak \psi_t &&\text{ weakly} &&\text{ in } &&L^2(0,T;H_0^1\cap H^2),\\
\psi_{tt}^n &\longweak \psi_{tt} &&\text{ weakly} &&\text{ in } &&L^2(0,T;L^2), 
\end{alignat*}
as $n \to \infty$. Moreover, due to the compact embeddings  \cite[Corollary 4]{simon1986compact}
$$\begin{aligned}  W^{1,\infty,2}(0,T;H_0^1 \cap H^2,H_0^1) &\hookrightarrow \hookrightarrow C([0,T];H_0^1), \\ W^{1,\infty,2}(0,T;H_0^1,L^2) &\hookrightarrow \hookrightarrow C([0,T];L^2),
\end{aligned}$$
we even know that
\begin{equation} \label{strong_conv}
\begin{aligned}
\psi^n &\longrightarrow \psi &&\text{ strongly in} &&C([0,T];H_0^1), \\
\psi^n_t &\longrightarrow \psi_t &&\text{ strongly in} &&C([0,T];L^2),
\end{aligned}
\end{equation}
as $n \to \infty$. Next we want to show that the limit $\psi$ solves the initial-boundary value problem \eqref{IBVP_linearizedBlackstock} for the linearized Blackstock equation. \\
\indent Coming back to the Galerkin problem \eqref{Prop1Galerkin}, we test it with an arbitrary function $\eta \in C_c^\infty(0,T)$ to obtain
\begin{equation} \label{interim_eq}
\begin{aligned}
-\int_0^T  ( \psi^n_{t}(t), w_j)_{L^2} \eta'(t)\textup{d}t  =\int_0^T &\Big( (\alpha(t) \Delta \psi^n(t),w_j)_{L^2} +b (\Delta \psi^n_t(t), w_j)_{L^2} \\
& +(\beta(t) \cdot \nabla \psi_t^n(t),w_j)_{L^2} + ( f(t),w_j)_{L^2} \Big) \eta(t) \, \textup{d}t,
\end{aligned}
\end{equation}
for all $j \in \{1,...,n\}$. Note that the functional 
$$\begin{aligned}
 z \mapsto \int_0^T (z_t(t),w_j)_{L^2} \eta'(t) +(\alpha(t) \Delta z(t),w_j) \eta(t)+ b (\Delta z_t(t), w_j )_{L^2} \eta(t)  +(\beta(t)\cdot \nabla z_t(t),w_j\big)_{L^2} \eta(t) \,\textup{d}t
\end{aligned}$$
is linear and continuous on $H^1(0,T;H_0^1\cap H^2)$. Thus letting $n \to \infty$ in  \eqref{interim_eq} leads to
\begin{equation*} 
\begin{aligned}
 -\int_0^T  (\psi_{t}(t), w_j)_{L^2} \eta'(t) \textup{d}t = \int_0^T &\Big( (\alpha(t) \Delta \psi(t),w_j)_{L^2} +b (\Delta \psi_t(t),  w_j)_{L^2}\\
 & +(\beta(t) \cdot \nabla \psi_t(t),w_j)_{L^2} + ( f(t),w_j)_{L^2} \Big) \eta(t) \,\textup{d}t,
\end{aligned}
\end{equation*}
for all $j \in \mathbb{N}$ and all $\eta \in C_c^\infty(0,T)$. By construction $\cup_{n}V_n= \text{span}\{w_1,w_2,...\}$ is dense in $H_0^1 \cap H^2$ and therefore in $L^2$. We can then conclude that $\psi$ solves the equation
$$\psi_{tt} - \alpha \Delta \psi - b \Delta \psi_t = \beta \cdot \nabla \psi_t + f \quad \text{ in } L^2(0,T;L^2).$$
\noindent Due to the strong convergences stated in \eqref{strong_conv}, we know that 
\begin{alignat*}{2}
 \psi^n(0) &\longrightarrow \psi(0) &&\text{ in } H_0^1,\\
 \psi_t^n(0) &\longrightarrow \psi_t(0) &&\text{ in } L^2.
\end{alignat*}
Together with \eqref{Prop1GalerkinIC}, it follows that $\psi(0)=\psi_0$ and $\psi_t(0)=\psi_1$. Thus, $\psi$ is indeed a solution to \eqref{IBVP_linearizedBlackstock}. \medskip \\
\noindent \textbf{Uniqueness and the energy estimate.} Taking the limit inferior of the derived estimate \eqref{Prop1HighEnergy} for $\psi^n$ provides us with the desired energy estimate \eqref{Prop1Energy} for $\psi$ by exploiting the fact that the norms are weakly lower semicontinuous.\\
\indent Uniqueness follows by the linearity of the partial differential equation: assuming that we have two solutions $\psi^{(1)}$ and $\psi^{(2)}$, we can directly insert $\psi=\psi^{(1)}-\psi^{(2)}$ into the energy estimate \eqref{Prop1Energy} with $\psi_0=\psi_1=f=0$ to obtain $\psi=0$. \\
\indent Finally, we note that continuity in time of the solution follows due to the continuous embeddings
\begin{alignat*}{3}
\psi &\in H^1(0,T;H_0^1\cap H^2) &&\hookrightarrow && \, C([0,T];H_0^1 \cap H^2), \\
\psi_t &\in W^{1,2,2}(0,T;H_0^1 \cap H^2, L^2) &&\hookrightarrow && \, C([0,T];H_0^1);
\end{alignat*}
see \cite[Ch.1, Theorem 3.1]{lions1972non}.
\end{proof}
%%%%%%%%%%%%%%%%%%%%%%%%%%%%%%%%%%%%%% Proposition 2 %%%%%%%%%%%%%%%%%%%%%%%%%%%%%%%%
\subsection{Higher regularity of the solution} To show the well-posedness of the nonlinear model by using a fixed-point approach, the regularity of $\psi$ obtained in Proposition \ref{Proposition1} is not sufficient in a general two- and three-dimensional setting. Up to now we cannot expect that $\psi_t$ is bounded in $L^\infty(\Omega \times (0,T))$, because according to Proposition \ref{Proposition1} we only know that $\psi_t \in L^\infty(0,T;H^1)$. Therefore, we impose more regularity on the initial conditions.
\begin{proposition} \label{Proposition2}
	Let $\Omega \subset \mathbb{R}^d$, where $d \in \{1,2,3\}$, be a bounded, open, and $C^{2,1}$ regular domain and let $b>0$. Furthermore, assume that
	\begin{itemize}
	    \item $\alpha \in  W^{1,\infty,2}(0,T;W^{1,4},L^4)$, \smallskip 
	 %   \item there exist $\underline{\alpha}_0, \overline{\alpha}_0$ such that $0 \leq \underline{\alpha}_0 \leq \alpha (x,t) \leq \overline{\alpha}_0$ a.e. in $\Omega \times (0,T)$, \smallskip 
	    \item $\beta \in W^{1,\infty, \infty}(0,T;W^{1,4}(\Omega,\R^d),L^4(\Omega,\R^d)),$ \smallskip
	    \item $f \in W^{1,2,2}(0,T;H^1,L^2),$ \smallskip
	    \item $(\psi_0,\psi_1) \in (H_0^1 \cap H^3,H_0^1\cap H^2).$
	\end{itemize}
    Then for every $T>0$, the initial-boundary value problem for the linearized Blackstock equation \eqref{IBVP_linearizedBlackstock} admits a unique solution $\psi$ in the $C([0,T];L^2)$ sense. Moreover, the solution satisfies
	$$\begin{cases}
	\psi \in C([0,T];H_0^1\cap H^3), \\
	\psi_t \in C([0,T];H_0^1\cap H^2) \cap  L^2(0,T;H^3), \\
	\psi_{tt} \in C([0,T]; L^2) \cap L^2(0,T;H_0^1),
	\end{cases}$$ 
and the following energy estimate holds
\begin{equation} \label{Prop2Energy}
	\begin{aligned}
	& \| \psi \|^2_{L^\infty H^3} +\| \psi_t \|^2_{L^\infty H^2} +\|\psi_t\|_{L^2H^3}^2+\|\psi_{tt}\|_{L^\infty L^2}^2+\|\psi_{tt}\|_{L^2 H^1}^2 \\
	\leq& \, C_{\textup{P}_2}(T,\alpha,\beta) \big( |\psi_0|_{H^3}^2+|\psi_1|_{H^2}^2 +\|f\|_{L^2H^1}^2+\|f_t\|_{L^2L^2}^2+|f(0)|_{L^2}^2\big). 
	\end{aligned}
	\end{equation}
\end{proposition}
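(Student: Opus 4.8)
\noindent The plan is to keep the Faedo--Galerkin skeleton from the proof of Proposition~\ref{Proposition1}, now discretising the richer data so that $\|\psi_0^n\|_{H^3}\le\|\psi_0\|_{H^3}$ and $\|\psi_1^n\|_{H^2}\le\|\psi_1\|_{H^2}$, and to add two ingredients on top of the four test functions used there. The estimates of Proposition~\ref{Proposition1} carry over unchanged and already furnish, uniformly in $n$, the lower tier $\psi^n\in L^\infty H^2$, $\psi_t^n\in L^\infty H^1\cap L^2H^2$ and $\psi_{tt}^n\in L^2L^2$; what remains is to gain one order in time (to reach $\psi_{tt}\in L^\infty L^2\cap L^2H^1$) and one order in space (to reach $\psi\in L^\infty H^3$ and $\psi_t\in L^\infty H^2\cap L^2H^3$). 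I would obtain the former by differentiating the scheme in time, and the latter by an elliptic-regularity bootstrap on the limit equation; crucially these two steps can be decoupled, the time-regularity estimate resting only on the Proposition~\ref{Proposition1} bounds.

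For the time-regularity estimate I would differentiate the Galerkin formulation with respect to $t$ --- legitimate since the hypotheses $\alpha\in W^{1,\infty,2}(0,T;W^{1,4},L^4)$, $\beta\in W^{1,\infty,\infty}$ and $f\in W^{1,2,2}$ make the coefficient matrices and the load differentiable in time, so that $\psi_{ttt}^n$ exists in $L^2$ --- and then test the differentiated identity with $\psi_{tt}^n$. The term $-b(\Delta\psi_{tt}^n,\psi_{tt}^n)=b|\nabla\psi_{tt}^n|_{L^2}^2$ supplies the dissipation and $(\psi_{ttt}^n,\psi_{tt}^n)$ the time derivative $\tfrac12\tfrac{d}{dt}|\psi_{tt}^n|_{L^2}^2$, so that after integrating in time and applying Gronwall's inequality \eqref{Gronwall_inequality} one controls $\psi_{tt}\in L^\infty L^2\cap L^2H^1$. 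The new source terms produced by the differentiation are $\alpha_t\Delta\psi^n$, $\beta_t\cdot\nabla\psi_t^n$ and $f_t$; the last pairs directly with $\psi_{tt}^n$ and accounts for the $\|f_t\|_{L^2L^2}^2$ in \eqref{Prop2Energy}, while the initial value $\psi_{tt}^n(0)$, read off from the equation at $t=0$ as the $L^2$-projection of $\alpha(0)\Delta\psi_0^n+b\Delta\psi_1^n+\beta(0)\cdot\nabla\psi_1^n+f(0)$, is bounded in $L^2$ by the data and accounts for $|\psi_0|_{H^3}^2+|\psi_1|_{H^2}^2+|f(0)|_{L^2}^2$; the remaining terms $\alpha\Delta\psi_t^n$ and $\beta\cdot\nabla\psi_{tt}^n$ are disposed of by Cauchy--Schwarz and Young's inequality against quantities already bounded by Proposition~\ref{Proposition1}.

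For the spatial gain I would read the equation in \eqref{IBVP_linearizedBlackstock} as an elliptic problem for $\psi_t$, namely $b\Delta\psi_t=\psi_{tt}-\alpha\Delta\psi-\beta\cdot\nabla\psi_t-f$ with $\psi_t\in H_0^1$, and invoke elliptic regularity on the $C^{2,1}$ domain (this is exactly where the strengthened boundary regularity is used). Bounding the right-hand side in $L^2$ --- using the just-obtained $\psi_{tt}\in L^\infty L^2$, the bound $\psi\in L^\infty H^2$, and the Gagliardo--Nirenberg inequality \eqref{GN_inequality} together with Young's inequality to absorb $\beta\cdot\nabla\psi_t$ --- yields $\psi_t\in L^\infty H^2$. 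Bounding it in $H^1$ instead, where $\alpha\Delta\psi\in H^1$ forces $\|\psi\|_{H^3}$ to appear, gives $\|\psi_t\|_{H^3}\lesssim\|\psi_{tt}\|_{H^1}+\|\psi\|_{H^3}+\|\psi_t\|_{H^2}+\|f\|_{H^1}$; feeding this into $\|\psi(t)\|_{H^3}\le\|\psi_0\|_{H^3}+\int_0^t\|\psi_t\|_{H^3}$ and applying \eqref{Gronwall_inequality} closes $\psi\in L^\infty H^3$ and then $\psi_t\in L^2H^3$, accounting for the $\|f\|_{L^2H^1}^2$ term. Passage to the limit (Banach--Alaoglu plus the compact embeddings \cite{simon1986compact}) and uniqueness (by linearity, via the energy estimate with zero data) proceed as in Proposition~\ref{Proposition1}, and the stated time-continuity follows from the interpolation embedding \cite{lions1972non}: $\psi\in C([0,T];H^3)$ from $\psi_t\in L^2H^3$, $\psi_t\in W^{1,2,2}(0,T;H^3,H^1)\hookrightarrow C([0,T];H^2)$, and $\psi_{tt}\in W^{1,2,2}(0,T;H^1,H^{-1})\hookrightarrow C([0,T];L^2)$, the latter once $\psi_{ttt}\in L^2H^{-1}$ is checked from the differentiated equation.

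The main obstacle I anticipate is controlling the source $\alpha_t\Delta\psi$ created by the time differentiation: with only $\alpha_t\in L^2L^4$ and, at the Proposition~\ref{Proposition1} level, only $\Delta\psi\in L^\infty L^2$, the product cannot be placed in $L^2L^2$ by a naive Hölder estimate, and trying to compensate by claiming $\Delta\psi\in L^\infty L^4$ would require the very $H^3$-bound one is still proving, creating an apparent circularity. The way around it is to estimate $(\alpha_t\Delta\psi^n,\psi_{tt}^n)$ by the split $|\alpha_t|_{L^4}|\Delta\psi^n|_{L^2}|\psi_{tt}^n|_{L^4}$, keeping $\Delta\psi^n$ in $L^2$ (already controlled by Proposition~\ref{Proposition1}) and using \eqref{GN_inequality} on $\psi_{tt}^n$ to trade the resulting factor $|\nabla\psi_{tt}^n|_{L^2}^{d/4}$ against the dissipation via Young's inequality, the integrable weight $|\alpha_t|_{L^4}^{2/(2-d/4)}\in L^1(0,T)$ then feeding the Gronwall argument; the term $\beta_t\cdot\nabla\psi_t^n$ is handled identically, with a bounded weight since $\beta_t\in L^\infty L^4$. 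This is precisely what lets the time-regularity estimate stand on the Proposition~\ref{Proposition1} bounds alone and be carried out before the spatial bootstrap. A second, milder delicacy is that one should not attempt to reach $H^3$ by testing with $\Delta^2\psi^n$ or $\Delta^2\psi_t^n$: since $f$ is assumed only in $H^1$ and not in $H_0^1$, the term $(f,\Delta^2\psi_t^n)$ generates an uncontrollable boundary contribution, which is why the top order must be extracted by elliptic regularity on the limit rather than inside the Galerkin scheme.
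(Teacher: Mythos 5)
Your time-regularity half is essentially the paper's own argument: the paper likewise differentiates the Galerkin system in time (after a short bootstrap showing $\psi_{ttt}^n \in L^2(0,T;V_n)$), tests with $\psi_{tt}^n$, reads off $\psi_{tt}^n(0)$ from the equation at $t=0$, and defuses the critical term $\alpha_t\Delta\psi^n$ exactly as you propose --- keeping $\Delta\psi^n$ in $L^2$ (controlled by Proposition~\ref{Proposition1}), placing $\psi_{tt}^n$ in $L^4$, and absorbing the resulting gradient factor into the dissipation $b\|\nabla\psi_{tt}^n\|_{L^2L^2}^2$ via Young's inequality. Your elliptic reading $b\Delta\psi_t=\psi_{tt}-\alpha\Delta\psi-\beta\cdot\nabla\psi_t-f$ to obtain $\psi_t\in L^\infty H^2$ is also sound, and is simpler than the paper's auxiliary-problem trick borrowed from Mizohata--Ukai: it works because $\psi_t(s)\in H^2$ for a.e.\ $s$ is already guaranteed by Proposition~\ref{Proposition1}, so you are only upgrading the temporal integrability of a quantity known to be finite.

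The gap is in the $H^3$ step. The estimate $\|\psi_t(s)\|_{H^3}\lesssim\|\psi_{tt}(s)\|_{H^1}+\|\psi(s)\|_{H^3}+\|\psi_t(s)\|_{H^2}+\|f(s)\|_{H^1}$ presupposes $\psi(s)\in H^3$ (only then is $\alpha\Delta\psi(s)$ in $H^1$), whereas at this stage you only know $\psi\in L^\infty(0,T;H^2)$. Consequently, in $\|\psi(t)\|_{H^3}\le\|\psi_0\|_{H^3}+C\int_0^t\bigl(\cdots+\|\psi(s)\|_{H^3}\bigr)\,\textup{d}s$ the function $s\mapsto\|\psi(s)\|_{H^3}$ is not known to be finite --- let alone continuous, which the cited Gronwall lemma \eqref{Gronwall_inequality} requires --- and Gronwall yields nothing: the inequality is vacuously satisfied by $+\infty$. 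This is the very circularity you flagged for $\alpha_t\Delta\psi$, resurfacing one level up and left unresolved; nor can it be evaded by running the bootstrap on the Galerkin approximations, since $\psi^n$ satisfies the equation only after $L^2$-projection onto $V_n$, and that projection is not uniformly $H^1$-bounded on functions with nonzero trace such as $\alpha\Delta\psi^n$ --- which is precisely why the paper performs the spatial upgrade on the limit. The paper breaks the circle with one qualitative step your proposal is missing: it treats \eqref{Prop2SpaceDiffReg}, i.e.\ $\partial_t\Delta\psi+\tfrac{\alpha}{b}\Delta\psi=\tfrac{1}{b}\bigl(\psi_{tt}-\beta\cdot\nabla\psi_t-f\bigr)$, as a linear ODE for $\Delta\psi$ in the Banach space $H^1$. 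Its right-hand side lies in $L^2(0,T;H^1)$ by the bounds already in hand ($\psi_{tt}\in L^2H^1$ from your time-differentiated estimate, $\psi_t\in L^2H^2$ from Proposition~\ref{Proposition1}), and multiplication by $\alpha\in L^\infty(0,T;W^{1,4})$ is bounded on $H^1$, so the explicit integrating-factor formula produces a solution in $C([0,T];H^1)$ which coincides with $\Delta\psi$ by uniqueness of the ODE in $C([0,T];L^2)$; only then does Grisvard's elliptic regularity on the $C^{2,1}$ domain give $\psi\in C([0,T];H_0^1\cap H^3)$, after which quantitative bounds (the paper's space-differentiated energy estimates, or equally your elliptic estimate plus Gronwall) are legitimate. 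Without this step, your spatial bootstrap assumes the finiteness it is meant to establish.
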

\begin{proof}
The main idea of the proof is to derive new energy estimates which will allow us to extend the regularity of $\psi$. To this end, we differentiate the Galerkin equation \eqref{Prop1Galerkin} with respect to time and the linearized Blackstock equation \eqref{IBVP_linearizedBlackstock} with respect to space. \\
\indent Proposition~\ref{Proposition1} already provides us with a unique solution. However, now the coefficients and the source term in the Galerkin equation \eqref{Prop1Galerkin} are all continuous in time due to the embeddings
\begin{equation*} 
\begin{aligned}
\alpha &\in W^{1,\infty,2}(0,T;W^{1,4},L^4) &&\hookrightarrow C([0,T];L^4), \\
\beta &\in W^{1,\infty, \infty}(0,T;W^{1,4}(\Omega,\R^d),L^4(\Omega,\R^d)) &&\hookrightarrow C([0,T]; L^4(\Omega,\R^d)), \\
f &\in H^1(0,T;L^2) &&\hookrightarrow C([0,T];L^2).
\end{aligned}
\end{equation*}
Recall that we rewrote the Galerkin problem as the initial-value problem \eqref{Prop1GalerkinODE}. In the present setting, the right-hand side of the equation is continuous with respect to $t$ for a fixed $r$. We are thus allowed to apply the Cauchy-Peano existence theorem to infer that \eqref{Prop1GalerkinODE} has a local solution $u^n \in C^1([0,T_n];\mathbb{R}^{2d})$. By definition of $u^n$, we have $\xi^n \in C^2([0,T_n];\mathbb{R}^d)$ and by construction of the approximate solution it follows that $\psi^n \in C^2([0,T_n];V_n)$. We can extend the time interval to $[0,T]$ by employing the same argument as in Proposition~\ref{Proposition1}. \smallskip \\ 
%%%%%%%%%%%%%%%%%%%%%%%%%%%%%%%%%%%%%%%%%%%%%%%%%%%%%%%%%%%%%%%%%%%%%%%%%%%%%%%%%%%%%%%%%%%%%%%%%%%%%%%%%%%%%%%%%%%%
\noindent \textit{Bootstrap argument to show $\psi^n \in H^3(0, T; V_n)$.} To take the time derivative of the Galerkin equation \eqref{Prop1Galerkin}, we first have to justify that $\psi_{ttt}^n$ is indeed well-defined. We observe that with the assumptions of Proposition~\ref{Proposition2}, we can show that $K^n \in H^1(0,T;\mathbb{R}^{n \times n})$ and $C^n \in H^1(0,T;\mathbb{R}^{n \times n})$. Indeed, it holds that
$$\begin{aligned}
\|K_{ij}\|_{H^1(0,T)}^2 &\lesssim \|\alpha\|_{L^2 L^2}^2 \|w_i\|_{H^2}^2 \|w_j\|_{H^1}^2+\|\alpha_t\|_{L^2 L^2}^2 \|w_i\|_{H^2}^2 \|w_j\|_{H^2}^2 \\&\quad +\|f\|_{L^2L^2}^2 \|w_j\|_{L^2}^2+\|f_t\|_{L^2L^2}^2 \|w_j\|_{L^2}^2, \\
\|C_{ij}\|_{H^1(0,T)}^2 &\lesssim  b \|w_i\|_{H^2}^2 \|w_j\|_{L^2}^2 + \|\beta\|_{L^2 L^2}^2 \|w_i\|_{H^2}^2 \|w_j\|_{H^1}^2 + \|\beta_t\|_{L^2 L^2}^2 \|w_i\|_{H^2}^2 \|w_j\|_{H^1}^2,
\end{aligned}$$
for all $i,j \in [1,n]$. Thus we have $$K^n \xi^n + C^n \xi_t^n \in H^1(0,T;\R^n), \ F^n \in H^1(0,T;\R^n) $$ and by \eqref{Prop1GalerkinMatrix_old} the same is valid for $\xi_{tt}^n$. In particular, $\xi_{ttt}^n \in L^2(0,T;\R^n)$ and hence $\psi^n_{ttt} \in L^2(0,T;V_n)$. \smallskip \\
%%%%%%%%%%%%%%%%%%%%%%%%%%%%%%%%%%%  \psi_{tt}(0) %%%%%%%%%%%%%%%%%%%%%%%%%%%%%%%%%%%%%%%%%%%%%%%%%%%%%%%%
\noindent \textbf{Higher-order energy estimate for $\psi^n$.} Before proceeding further, we show that we can bound $|\psi_{tt}^n(0)|_{L^2}$ with the data, which we need to obtain the final estimate. \noindent As we have seen, testing the Galerkin equation \eqref{Prop1Galerkin} with $\psi_{tt}^n(t) \in V_n$, $t\in[0,T]$, yields 
$$\begin{aligned} |\psi^n_{tt}(t)|_{L^2}^2 =-\big( &b\Delta \psi^n_t(t)+\alpha(t) \Delta \psi^n(t)+ \beta(t) \cdot \nabla \psi_t^n(t) +f(t),\psi^n_{tt}(t) \big)_{L^2},
\end{aligned}$$
for all $t\in [0,T]$. At $t=0$ we have
\begin{equation} \label{Prop1ICtwice}
\begin{aligned} |\psi^n_{tt}(0)|_{L^2} \leq \, b |\Delta \psi^n_1|_{L^2}+|\alpha(0)|_{L^4} |\Delta \psi^n_0|_{L^4} + |\beta(0)|_{L^4} |\nabla \psi^n_1|_{L^4} +|f(0)|_{L^2}.
\end{aligned} 
\end{equation}
%%%%%%%%%%%%%%%%%%%%%%%%%%%%%%%%%  Derivative w.r.t. time %%%%%%%%%%%%%%%%%%%%%%%%%%%%%%%%%%%%%%%%%%%%%%%%
\noindent \textit{Differentiated Galerkin system.}
We know that $\psi^n \in C^2([0,T];V_n) \cap H^3(0,T;V_n)$ which means that we are allowed to take the weak time derivative of the Galerkin equation \eqref{Prop1Galerkin}. Taking the derivative results in 
\begin{equation} \label{Prop1DiffTimeGalerkin}
\begin{aligned}
& (\psi_{ttt}^n,v)_{L^2} -(\alpha_t \Delta \psi^n,v)_{L^2} - (\alpha \Delta \psi_t^n,v)_{L^2} - b (\Delta \psi_{tt}^n, v)_{L^2} \\ &= (\beta_t \cdot \nabla \psi_t^n,v)_{L^2}+(\beta \cdot \nabla \psi_{tt}^n,v)_{L^2} + (f_t, v)_{L^2}, \\
\end{aligned} 
\end{equation} 
for all $v \in V_n$, pointwise a.e. in $[0,T]$. \smallskip \\
\noindent \textit{Testing with $ \psi^n_{tt}$.} 
Testing the equation \eqref{Prop1DiffTimeGalerkin} with $\psi_{tt}^n(t) \in V_n$, integrating with respect to time, employing H\" older's inequality, and taking the essential supremum in time yields
\begin{equation*} 
\begin{aligned} 
& \frac12 \|\psi^n_{tt}\|^2_{L^\infty L^2}+ b \|\nabla \psi^n_{tt}\|^2_{L^2 L^2} \\
\leq &  \, \frac12|\psi^n_{tt}(0)|^2_{L^2}+\|\psi^n_{tt}\|_{L^2 L^2} \big( \|\alpha\|_{L^\infty(\Omega \times (0,T))} \|\Delta \psi_t^n\|_{L^2L^2} +\| f_t\|_{L^2 L^2}  \big)
\\ &+\|\psi_{tt}^n\|_{L^2L^4} \big(\|\alpha_t\|_{L^2L^4} \|\Delta \psi^n\|_{L^\infty L^2}+ \|\beta_t\|_{L^2L^4} \|\nabla \psi_t^n\|_{L^\infty L^2}+\| \beta\|_{L^\infty L^4} \|\nabla \psi_{tt}^n\|_{L^2 L^2}\big).
\end{aligned}  
\end{equation*} 
By further utilizing Poincaré's, Gagliardo-Nirenberg's and Young's $\eps$-inequality, we obtain
\begin{equation*} 
\begin{aligned} 
& \frac12 \|\psi^n_{tt}\|^2_{L^\infty L^2}+ (b-5\eps) \|\nabla \psi^n_{tt}\|^2_{L^2 L^2} \\
\leq &  \, \frac12|\psi^n_{tt}(0)|^2_{L^2}+ \frac{K_\text{P}^2}{4\eps} \big( \|\alpha\|_{L^\infty(\Omega \times (0,T))}^2 \|\Delta \psi_t^n\|_{L^2L^2}^2  +\| f_t\|_{L^2 L^2}^2  \big)
\\ &+ \frac{K_1^2}{4\eps} \big( \|\alpha_t\|_{L^2L^4}^2 \|\Delta \psi^n\|_{L^\infty L^2}^2+ \|\beta_t\|_{L^2L^4}^2 \|\nabla \psi_t^n\|_{L^\infty L^2}^2 \big) +C\| \beta\|_{L^\infty L^4}^\frac{2}{1-d/4} \|\psi_{tt}^n\|_{L^2 L^2}^2.
\end{aligned}  
\end{equation*} 
 Next we make use of the bound for $|\psi_{tt}^n(0)|_{L^2}$ we derived in \eqref{Prop1ICtwice} and the energy estimate \eqref{Prop1LowEnergy} we derived in the proof of Proposition \ref{Proposition1} to bound $\|\Delta \psi^n\|_{L^\infty L^2}^2$, $\|\Delta \psi_t^n\|_{L^2 L^2}^2$, $\|\nabla \psi_t^n\|_{L^\infty L^2}^2$, and $\|\psi_{tt}^n\|_{L^2L^2}^2$. In this way we find 
\begin{equation*} 
%\label{Prop2_some_est}
\begin{aligned} 
& \|\psi^n_{tt}\|^2_{L^\infty L^2}+  \|\psi^n_{tt}\|^2_{L^2 H^1} \\
\lesssim & \,  |\alpha(0)|_{L^4}^2 |\psi^n_0|_{H^3}^2 + (1+|\beta(0)|_{L^4}^2) |\psi^n_1|_{H^2}^2 +|f(0)|_{L^2}^2  +  \|f_t\|_{L^2L^2}^2\\
&+ C_{\text{P}_1}(T,\alpha,\beta) \big(|\psi_0^n|_{H^2}^2+|\psi_1^n|_{H^1}^2+\|f\|_{L^2L^2}^2 \big) \big( \|\alpha\|_{L^\infty(\Omega \times (0,T))}^2+\|\alpha_t\|_{L^2 L^4}^2 + \|\beta\|_{L^\infty L^4}^\frac{2}{1-d/4}\\
&+ \|\beta_t\|_{L^2 L^4}^2 \big).
\end{aligned}  
\end{equation*} 
We add  this estimate to the lower energy estimate \eqref{Prop1LowEnergy} we obtained in Proposition \ref{Proposition1} to get the higher energy estimate
\begin{equation} \label{Prop1HighEnergy} 
\begin{aligned}
&\| \psi^n \|^2_{L^\infty H^2} +\| \psi^n_t \|^2_{L^\infty H^1} +\|\psi^n_t\|_{L^2H^2}^2+\|\psi_{tt}^n\|_{L^\infty L^2}^2+\|\psi_{tt}^n\|_{L^2H^1}^2 
\\ & \leq \, C_{1}(T, \alpha, \beta) \big( |\psi_0|_{H^3}^2+|\psi_1|^2_{H^2}+ \|f\|_{L^2L^2}^2+\|f_t\|_{L^2L^2}^2+|f(0)|_{L^2}^2\big).
\end{aligned}
\end{equation}
The constant above is given by 
$$\begin{aligned} C_{1}(T,\alpha,\beta) &= C_{\text{P}_1}(T,\alpha,\beta) \big(1+\|\alpha\|_{L^\infty(\Omega \times (0,T))}^2+\|\alpha_t\|_{L^2 L^4}^2 + \|\beta\|_{L^\infty L^4}^\frac{2}{1-d/4} + \|\beta_t\|_{L^2 L^4}^2  \big) \\ &\quad + C \big(1+|\alpha(0)|_{L^4}^2+|\beta(0)|_{L^4}^2 \big),
\end{aligned}$$
where $C_{\textup{P}_1}(T,\alpha,\beta)$ denotes the constant given by Proposition \ref{Proposition1}.\\
\indent We can then proceed as in Proposition~\ref{Proposition1} via weak limits to obtain a solution $\psi$ and then prove its uniqueness and continuous dependence on the data. \smallskip \\
\noindent \textbf{Higher regularity in space.} We now want to show $H^3$-regularity in space of the solution. We already know by Proposition \ref{Proposition1} that there is a $\psi$ such that
\begin{equation} \label{Prop2SpaceDiffReg}
    \frac{\textup{d}}{\textup{d}t} \Delta \psi+ \frac{\alpha}{b} \Delta \psi  =\frac{\psi_{tt}- \beta \cdot \nabla \psi_t- f}{b}
\end{equation} 
in the $L^2(0,T;L^2)$ sense. Note that the right-hand side belongs to $L^2(0,T;H^1)$ thanks to the regularity assumptions on $\beta$, $f$, and the regularity of $\psi$ given by Proposition \ref{Proposition1}. From the equation \eqref{Prop2SpaceDiffReg}, we find that
$$ \Delta \psi(t) = \displaystyle e^{-\int_0^t \frac{\alpha(\tau)}{b} \textup{d}\tau} \left(\Delta \psi_0 + \int_0^t e^{\int_0^s \frac{\alpha(\tau)}{b} \textup{d}\tau} \frac{\psi_{tt}(s)-\beta(s) \cdot \nabla \psi_t(s)- f(s)}{b} \, \textup{d}s \right)$$
for all $t\in [0,T]$, by continuity of $\Delta \psi$ and the Lebesgue integral. By a regularity result for elliptic partial differential equations \cite[Theorem 2.5.1.1]{grisvard2011elliptic}, $\psi \in C([0,T];H_0^1 \cap H^3)$ and hence by \eqref{Prop2SpaceDiffReg} also $\psi_t \in L^2(0,T;H_0^1\cap H^3)$. As a consequence, the linearized Blackstock equation is satisfied in $L^2(0,T;H^1)$. Therefore, we are allowed to take the gradient of the equation. We conclude that $\psi$ satisfies the vector-valued system
\begin{equation} \label{Prop2DiffSpace}
\nabla \psi_{tt} - \nabla \alpha \Delta \psi - \alpha \nabla \Delta \psi -b \nabla \Delta \psi_t= \nabla \beta  \nabla \psi_t+\beta \Delta \psi_t + \nabla f
\end{equation}
in the $L^2(0,T;L^2(\Omega,\mathbb{R}^d))$ sense. \smallskip \\ 
\noindent \textbf{Energy estimate.} Testing the equation \eqref{Prop2DiffSpace} with $-\chi_{(0,t)} \nabla\Delta \psi_t \in L^2(0,T;L^2(\Omega,\mathbb{R}^d))$ yields, after employing the typical inequalities, 
\begin{equation} \label{Prop2Energy1}
\begin{aligned} 
&(b/2-5\eps) \|\nabla\Delta \psi_{t}\|^2_{L^2L^2} \\
\leq& \, \frac12 |\Delta \psi_1|_{L^2}^2 + C\|\alpha \|_{L^\infty W^{1,4}}^2 \big(\|\Delta \psi\|_{L^2L^2}^2+ \|\nabla \Delta \psi\|_{L^2L^2}^2\big) +C \|\beta\|_{L^\infty W^{1,4}}^2 \|\Delta \psi_t\|_{L^2L^2}^2 \\
&+ C\|\nabla f\|_{L^2 L^2}^2+\frac{1}{2b} \|\nabla \psi_{tt}\|_{L^2L^2}.
\end{aligned} 
\end{equation}
Above we have made use of 
$$\|\Delta \psi\|_{L^2L^4}^2 \leq K_5^2 \big( \|\Delta \psi\|_{L^2L^2}^2+\|\nabla \Delta \psi\|_{L^2L^2}^2\big),$$
as introduced in the theoretical preliminaries. Further testing \eqref{Prop2DiffSpace} with $-\chi_{(0,t)} \nabla \Delta \psi \in L^2(0,T;L^2(\Omega,\mathbb{R}^d))$ and using Young's $\eps$-inequality yields 
\begin{equation} \label{Prop2Energy2}
\begin{aligned} 
&\frac{b}{2} |\nabla\Delta \psi(t)|^2_{L^2} 
\\\leq& \, \frac{1}{2}\|\nabla \psi_{tt}\|_{L^2L^2}^2+\frac{1}{2}\|\nabla \Delta \psi\|^2_{L^2L^2}+C\|\nabla \alpha\|_{L^\infty L^4}^{2} \big( \|\Delta \psi\|_{L^2L^2}^2 + \|\nabla \Delta \psi\|_{L^2L^2}^2 \big) \\
&+\|\alpha\|_{L^\infty L^\infty}\|\nabla \Delta \psi\|^2_{L^2L^2}+\frac{b}{2} |\nabla \Delta \psi_0|_{L^2}^2+C(\|\beta\|_{L^\infty W^{1,4}}^2+1)\|\nabla \Delta \psi\|_{L^2L^2}^2+\frac12 \|\Delta \psi_t\|_{L^2L^2}^2\\
&+C \|\nabla f\|_{L^2 L^2}^2.
\end{aligned}
\end{equation}
%%%%%%%%%%%%%%%%%%%%%%%%%%%%%%%%%%%%%%%%%%%%%%%%%%%%%%%%%%%%%%%%%%%%%%%%%%%%%%
\noindent We then add \eqref{Prop2Energy2} to the continuous version of the estimate \eqref{Prop1HighEnergy} and apply the modification of Gronwall's inequality \eqref{Gronwall_inequality}  to get
\begin{equation} \label{higher_est}
\begin{aligned}
&\|\nabla \Delta \psi\|_{L^\infty L^2}^2  \\
\lesssim & \,   C_2(T, \alpha, \beta) \, \big( |\psi_0|_{H^3}^2+|\psi_1|^2_{H^2}+ \|f\|_{H^1L^2}^2+ \|f_t\|_{L^2L^2}^2+|f(0)|_{L^2}^2\big) ,
\end{aligned}
\end{equation}
where the constant is given by $$C_2(T, \alpha, \beta)= \, C_{1}(T, \alpha, \beta)\, \textup{exp}(\, CT(\|\nabla \alpha\|_{L^\infty L^4}^{2}+\|\alpha\|_{L^\infty L^\infty}+\|\beta\|_{L^\infty W^{1,4}}^2+1)).$$
\indent To obtain a bound on $\psi_t$ in $L^\infty(0,T; H^2)$, we employ a trick from \cite{MizohataUkai} and now consider the following initial boundary value problem
 \begin{align} \label{diff_problem}
\begin{cases}
v_{tt}-\alpha \Delta v-b \Delta v_t=h(x,t) \quad \text{in } \Omega \times (0,T),
\smallskip\\ v=0 \quad \text{ on }  \partial \Omega \times (0,T),  
\smallskip\\ (v, v_t)=(\psi_1, \, \alpha(0) \Delta \psi_0+b\Delta \psi_1+ \beta(0) \cdot \nabla \psi_1 +f(0)) \quad \text{ on }   \Omega \times \{t=0\},
\end{cases}
\end{align}
where the right side of the partial differential equation is given by $$h=\alpha_t \Delta \psi + \beta_t \cdot \nabla \psi_t+ \beta \cdot \nabla \psi_{tt}+f_t.$$ 
We can estimate $h$ as follows
\begin{equation}
\begin{aligned}
    \|h\|_{L^2L^2} \leq& \, \|\alpha_t\|_{L^2 L^4} \|\Delta \psi\|_{L^\infty L^4} +\| \beta_t\|_{L^\infty L^4} \|\nabla \psi_t\|_{L^2 L^4}\\
 & + \|\beta\|_{L^\infty L^\infty}\|\nabla \psi_{tt}\|_{L^2 L^2}+\|f_t\|_{L^2L^2}.
\end{aligned}
\end{equation}
By employing the embedding $H_0^1 \cap H^3 \hookrightarrow H_0^1 \cap W^{2,4}$ from the preliminaries, we further get
\begin{equation*}
\begin{aligned}
\|h\|^2_{L^2L^2} \lesssim C_3(T, \alpha, \beta)\big( |\psi_0|_{H^3}^2+|\psi_1|^2_{H^2}+ \|f\|_{L^2H^1}^2+\|f_t\|^2_{L^2L^2}+|f(0)|_{L^2}^2\big),
\end{aligned}
\end{equation*}
where 
\begin{align*}
    C_3(T, \alpha, \beta)=\|\alpha_t\|^2_{L^2 L^4}C_2(T, \alpha, \beta)+C_1(T, \alpha, \beta)\big(\|\alpha_t\|^2_{L^2 L^4}+\|\beta\|^2_{L^\infty L^\infty}+\| \beta_t\|^2_{L^\infty L^4}\big)+C.
\end{align*}
Therefore, we know that $h \in L^2(0,T; L^2)$, $v(0) \in H_0^1 \cap H^2$, and $v_t(0) \in L^2$. It can be shown, similarly to Proposition~\ref{Proposition1}, that problem \eqref{diff_problem} has a unique solution such that
\begin{equation} \label{regularity_v}
\begin{cases} 
v \in C([0, T]; H_0^1) \cap C_w([0,T];H_0^1\cap H^2), \\
v_t \in C([0,T]; L^2) \cap L^2(0,T; H_0^1),\\
v_{tt} \in L^2(0,T;H^{-1}). 
\end{cases}
\end{equation}
Moreover, the following estimate holds
\begin{equation} \label{Prop1Energy_old}
\begin{aligned}
& \| v \|^2_{L^\infty H^2} +\| v_t \|^2_{L^\infty L^2} +\|\nabla v_t\|_{L^2L^2}^2 \\
\lesssim& \, \textup{exp}\big(CT(\|\alpha\|^2_{L^\infty L^\infty}+1)\big)\big(|v(0)|_{H^2}^2+ |v_t(0)|_{L^2}^2 +\|h\|_{L^2L^2}^2\big).
\end{aligned}
\end{equation}
By integrating the partial differential equation in \eqref{diff_problem} from $0$ to $t$, it can be shown that $\psi=\psi_0+\int_0^t v(x,s) \, \textup{ds}$. Therefore, from \eqref{regularity_v} we have that 
\begin{equation*} 
\begin{cases} 
\psi_{tt} \in C([0,T]; L^2) \cap L^2(0,T; H_0^1),\\
\psi_{ttt} \in L^2(0,T;H^{-1}). 
\end{cases}
\end{equation*}
Moreover, we obtain 
\begin{equation} \label{estimate_psi_t}
\begin{aligned}
 \| \psi_t \|^2_{L^\infty H^2} 
 \lesssim & \, C_4(T, \alpha, \beta)\big(|\psi_0|_{H^3}^2+ |\psi_1|^2_{H^2}+\|f\|^2_{L^2 H^1}+\|f_t\|^2_{L^2L^2}+|f(0)|_{L^2}^2\big), % \\
\end{aligned}
\end{equation}
where $C_4(T, \alpha, \beta)=(C+C_3(T, \alpha, \beta))\, \textup{exp}(CT(1+\|\alpha\|^2_{L^\infty L^\infty}))$. \\
\indent We can now add \eqref{Prop2Energy1} multiplied by $b>0$, \eqref{higher_est}, \eqref{estimate_psi_t}, and the continuous version of \eqref{Prop1HighEnergy} and apply Gronwall's inequality to get the final estimate \eqref{Prop2Energy}, where
% $$C_{\textup{P}_2}(T, \alpha, \beta)= \, C \displaystyle \bigg(\sum_{i=1}^4 C_i(T, \alpha, \beta)+1\bigg) \,\text{exp}\big(CT(\|\alpha\|^2_{L^\infty W^{1,4}}+\|\beta\|^2_{L^\infty W^{1,4}}) \big).$$
$$C_{\textup{P}_2}(T, \alpha, \beta)=(C_2+C_4)\,\text{exp}\big(CT(\|\alpha\|^2_{L^\infty W^{1,4}}+\|\beta\|^2_{L^\infty W^{1,4}}) \big).$$
 Finally, continuity in time of $\psi$ and $\psi_t$ follows due to the continuous embeddings
\begin{alignat*}{3}
 \psi &\in H^1(0,T;H_0^1\cap H^3) &&\hookrightarrow && \, C([0,T];H_0^1\cap H^3), \\
 \psi_t &\in W^{1,2,2}(0,T;H_0^1\cap H^3,H_0^1) && \hookrightarrow && \,  C([0,T];H_0^1\cap H^2); 
\end{alignat*}
see \cite[Ch.1, Theorem 3.1]{lions1972non}. 
\end{proof}
%%%%%%%%%%%%%%%%%%%%%%%%%%%%%%%% Nonlinear Blackstock %%%%%%%%%%%%%%%%%%%%%%%%%%%%%%%%%%%%%%%%
\section{Local well-posedness}\label{section:local_wellposedness}
%%%%%%%%%%%%%%%%%%%%%%%%%%%%%%%%%%%%%%%%%%%%%%%%%%%%%%%%%%%%%%%%%%%%%%%%%%%%%%%%
\indent We now return to the nonlinear Blackstock model and employ the Banach fixed point theorem to show existence and uniqueness of a solution for small data. \\
\indent We introduce the space
\begin{equation*}
\begin{aligned}
\mathcal{X}=\Bigl\{v:(0,T)\times \Omega \to \mathbb{R} \ \Big\vert & \,  v \in L^\infty(0,T;H_0^1\cap H^3),\\
&v_t \in L^\infty (0,T;H_0^1\cap H^2)\cap L^2(0,T;H^3), \\
& v_{tt} \in C([0,T]; L^2) \cap L^2(0,T;H_0^1) \, \Bigr\},
\end{aligned} 
\end{equation*}
equipped with the norm 
\begin{align*}
\|v\|_{\mathcal{X}} =\max\Big\{ \| v \|_{L^\infty H^3} , \|v_t\|_{L^\infty H^2}, \|v_{t}\|_{L^2H^3},\|v_{tt}\|_{C L^2}, \|v_{tt}\|_{L^2 H^1} \Big\}.
\end{align*}
We are now ready to state the main result.
\begin{theorem}[Local well-posedness]\label{Theorem_Local_wellp} Let $\Omega \subset \mathbb{R}^d$, where $d \in \{1,2,3\}$, be a bounded, open, and $C^{2,1}$ regular domain. Let $c^2, b>0$, and $k \in \mathbb{R}$. Assume that $(\psi_0,\psi_1) \in (H_0^1 \cap H^3,H_0^1\cap H^2)$ and that 
\begin{align*}
    |\psi_0|_{H^3}^2+|\psi_1|_{H^2}^2  \leq \kappa, \quad \kappa>0.
\end{align*}
Then for sufficiently small $\mathcal{\kappa}$ and final time $T>0$, there exists a unique solution $\psi \in \mathcal{X}$ of the initial-boundary value problem \eqref{Blackstock} for the Blackstock equation. The solution depends continuously on the initial data in the $\mathcal{X}$-norm.
\end{theorem}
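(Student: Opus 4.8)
The plan is to realize the solution as a fixed point of the solution operator for the linearized model. Concretely, I would fix a radius $R>0$, work on the closed ball $B_R=\{v\in\mathcal{X}:\|v\|_{\mathcal{X}}\le R\}$, and define $\mathcal{F}\colon v\mapsto\psi$ by letting $\psi$ solve \eqref{IBVP_linearizedBlackstock} with $\alpha=c^2(1+kv_t)$, $\beta=2\nabla v$, $f=0$, and initial data $(\psi_0,\psi_1)$. The first task is to check that $\mathcal{F}$ is well-defined on $B_R$, i.e.\ that these coefficients meet the hypotheses of Proposition~\ref{Proposition2}. This follows from the structure of $\mathcal{X}$ together with the embeddings in \eqref{embed_constants}: $v_t\in L^\infty H^2\hookrightarrow L^\infty W^{1,4}$ and $v_{tt}\in L^2 H^1\hookrightarrow L^2 L^4$ give $\alpha\in W^{1,\infty,2}(0,T;W^{1,4},L^4)$, while $v\in L^\infty H^3\hookrightarrow L^\infty W^{2,4}$ and $v_t\in L^\infty H^2\hookrightarrow L^\infty W^{1,4}$ give $\beta\in W^{1,\infty,\infty}(0,T;W^{1,4},L^4)$. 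Hence Proposition~\ref{Proposition2} applies and $\mathcal{F}(v)\in\mathcal{X}$.

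For the \emph{self-mapping} property I would use the energy estimate \eqref{Prop2Energy} with $f=0$, which reads $\|\mathcal{F}(v)\|_{\mathcal{X}}^2\lesssim C_{\textup{P}_2}(T,\alpha,\beta)(|\psi_0|_{H^3}^2+|\psi_1|_{H^2}^2)\le C_{\textup{P}_2}(T,\alpha,\beta)\,\kappa$. The point is that all norms of $\alpha$ and $\beta$ entering $C_{\textup{P}_2}$ are bounded, for $v\in B_R$, by fixed constants (depending only on $c^2,k,b$ and the embedding constants) as soon as we restrict to $R\le1$, using in particular $H^2\hookrightarrow L^\infty$; and the exponential-in-$T$ factor is bounded for $T$ below some $T_0$. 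Thus $C_{\textup{P}_2}(T,\alpha,\beta)\le\widetilde{C}$ uniformly over $R\le1$ and $T\le T_0$. Choosing $R^2=2\widetilde{C}\kappa$ and then $\kappa$ small enough that $R\le1$ yields $\|\mathcal{F}(v)\|_{\mathcal{X}}^2\le\widetilde{C}\kappa=R^2/2\le R^2$, so $\mathcal{F}$ maps $B_R$ into itself.

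The \emph{contraction} I would prove not in the $\mathcal{X}$-norm but in the weaker metric $\|w\|_{*}=\big(\|w\|_{L^\infty H^2}^2+\|w_t\|_{L^\infty H^1}^2+\|w_t\|_{L^2H^2}^2+\|w_{tt}\|_{L^2L^2}^2\big)^{1/2}$ controlled by Proposition~\ref{Proposition1}. Given $v_1,v_2\in B_R$ with images $\psi_i=\mathcal{F}(v_i)$, the difference $\psi=\psi_1-\psi_2$ has zero initial data and, after adding and subtracting, solves \eqref{IBVP_linearizedBlackstock} with coefficients $\alpha_1=c^2(1+kv_{1,t})$, $\beta_1=2\nabla v_1$ and source
\[
f=c^2 k\,(v_{1,t}-v_{2,t})\,\Delta\psi_2+2\,\nabla(v_1-v_2)\cdot\nabla\psi_{2,t}.
\]
Since the data vanish, \eqref{Prop1Energy} gives $\|\psi\|_{*}^2\lesssim C_{\textup{P}_1}(T,\alpha_1,\beta_1)\|f\|_{L^2L^2}^2$, and I would estimate $\|f\|_{L^2L^2}$ by placing the $\psi_2$-factors (bounded by $R$) in $L^\infty$-in-time and the $(v_1-v_2)$-factors in $L^2$-in-time: using $H^1\hookrightarrow L^4$ and $H^2\hookrightarrow W^{1,4}$ one finds $\|f\|_{L^2L^2}\lesssim\sqrt{T}\,R\,\|v_1-v_2\|_{*}$. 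As $C_{\textup{P}_1}$ is again uniformly bounded for $R\le1$ and $T\le T_0$, shrinking $T$ makes $\sqrt{T}$ small, so $\|\mathcal{F}(v_1)-\mathcal{F}(v_2)\|_{*}\le q\,\|v_1-v_2\|_{*}$ with $q<1$.

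To close the argument I would note that $B_R$, although defined through the strong norm, is complete for the weaker metric $\|\cdot\|_{*}$: a $\|\cdot\|_{*}$-Cauchy sequence in $B_R$ is bounded in $\mathcal{X}$, hence has a weak-$\star$ limit in $\mathcal{X}$ which, by uniqueness of limits and weak-$\star$ lower semicontinuity of the $\mathcal{X}$-norm, lies in $B_R$ and coincides with the $\|\cdot\|_{*}$-limit. The Banach fixed point theorem then provides a unique $\psi\in B_R$ with $\mathcal{F}(\psi)=\psi$; by construction this $\psi$ solves \eqref{Blackstock}, and uniqueness in $\mathcal{X}$ follows by noting that any small-data solution satisfies the a priori bound placing it in $B_R$. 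Continuous dependence on the data follows from the same difference computation applied to two solutions with distinct initial data, where now $\psi(0),\psi_t(0)$ are the data differences and Propositions~\ref{Proposition1}--\ref{Proposition2} yield a linear bound of the solution difference by the data difference. I expect the main obstacle to be the simultaneous balancing of the two norms — carrying the high regularity of Proposition~\ref{Proposition2} needed for self-mapping while extracting genuine smallness (the factor $R$ from small data, or $\sqrt{T}$ from a short horizon) in the source estimate for the contraction — together with the verification that $B_R$ is complete in the weaker metric.
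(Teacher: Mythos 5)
Your proposal is correct in substance and follows the same overall strategy as the paper (fixed point for the solution map of \eqref{IBVP_linearizedBlackstock} with $\alpha=c^2(1+kv_t)$, $\beta=2\nabla v$, self-mapping via the Proposition~\ref{Proposition2} estimate and smallness of $\kappa$), but your contraction step takes a genuinely different route. The paper proves contractivity of $\mathcal{F}$ directly in the strong norm $\|\cdot\|_{\mathcal{X}}$: it feeds the difference equation into Proposition~\ref{Proposition2} with source $f=c^2k\,\Delta\psi^{(2)}v_t+2\nabla\psi_t^{(2)}\cdot\nabla v$, which forces estimates not only of $\|f\|_{L^2L^2}$ but also of $\|\nabla f\|_{L^2L^2}$ and $\|f_t\|_{L^2L^2}$ (hence terms containing $v_{tt}$ and $\psi^{(2)}_{tt}$); the payoff is that the ball (with the initial-data constraint) is a closed subset of the Banach space $\mathcal{X}$, so completeness is free and Banach's theorem applies at once. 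You instead contract only in the lower-order norm $\|\cdot\|_*$ controlled by Proposition~\ref{Proposition1} --- the Kaltenbacher--Lasiecka style ``self-map in the high topology, contract in the low topology'' --- which buys a much lighter source estimate (no differentiation of $f$, and your bound $\|f\|_{L^2L^2}\lesssim R\,\|v_1-v_2\|_*$, with or without the extra $\sqrt{T}$, does follow from the embeddings you cite; smallness of $R\sim\sqrt{\kappa}$ alone already gives $q<1$), at the price of having to prove completeness of the ball in the weak metric.

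That completeness step is the one place where your argument needs repair. Weak-$\star$ compactness gives a limit $v$ satisfying the $\mathcal{X}$-norm bounds with $v_{tt}\in L^\infty(0,T;L^2)$, but membership of $v_{tt}$ in $C([0,T];L^2)$ --- which the definition of $\mathcal{X}$ requires --- does not follow from weak-$\star$ convergence together with $\|\cdot\|_*$-convergence, so the $\|\cdot\|_*$-closure of $B_R$ may a priori leave $\mathcal{X}$. The standard fix is to run the iteration in the ball with $L^\infty(0,T;L^2)$ in place of $C([0,T];L^2)$ (or in the $\|\cdot\|_*$-closure of $B_R$) and recover the continuity a posteriori: the fixed point $\psi$ solves the linear problem \eqref{IBVP_linearizedBlackstock} with coefficients $\alpha=c^2(1+k\psi_t)$, $\beta=2\nabla\psi$ satisfying the hypotheses of Proposition~\ref{Proposition2}, and uniqueness for that linear problem identifies $\psi$ with its Proposition~\ref{Proposition2} solution, which lies in $\mathcal{X}$. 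A related caveat concerns continuous dependence: carried out with Proposition~\ref{Proposition1} only, your difference computation yields continuity in the $\|\cdot\|_*$-norm, whereas the theorem asserts it in the $\mathcal{X}$-norm; to upgrade it you must, exactly as the paper does, apply Proposition~\ref{Proposition2} to the difference of the two solutions (so the $\nabla f$- and $f_t$-estimates you avoided in the contraction step reappear here) and absorb the resulting $R$-small terms into the left-hand side.
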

\begin{proof} %We carry out the proof with the assumption that $k \neq 0$. In the simpler case when $k=0$, the proof follows analogously with minor modifications. \\ 
\indent Following \cite{KaltenbacherLasiecka_Westervelt, KaltenbacherLasiecka_Kuznetsov}, we employ the Banach fixed-point theorem. To this end we introduce an operator $$\mathcal{F}: v \mapsto \psi,$$ where
\begin{equation} \label{space_B}
\begin{aligned} 
v \in	\mathcal{B}=\Bigl\{v \in \mathcal{X} \ \Big\vert & \, (v, v_t)\vert_{t=0}=(\psi_0, \psi_1),  \ \|v\|_{\mathcal{X}} \leq M \Bigr\},
\end{aligned} 
\end{equation}
and $\psi$ is given as the solution in the $L^\infty(0,T;L^2)$ sense to
\begin{align}\label{eq_psi_v}
\psi_{tt}-c^2(1+k v_t)\Delta \psi-b \Delta \psi_t=2 \nabla v \cdot \nabla \psi_t
\end{align}
with $ (\psi, \psi_t)\vert_{t=0}=(\psi_0, \psi_1)$. The parameter $M>0$  in \eqref{space_B} will be chosen to guarantee that $\mathcal{F}$ is a contractive self-mapping. \smallskip \\
%%%%%%%%%%%%%%%%%%%%%%% B is closed %%%%%%%%%%%%%%%%%%%%%%%%%%%%%%%%%%%%%%%%%
\noindent\textbf{$\mathcal{B}$ is closed in the topology induced by $\|\cdot \|_{\mathcal{X}}$.} We first show that $\mathcal{B}$ is closed in the topology of $\mathcal{X}$, which will later allow us to apply the Banach fixed-point theorem on $\mathcal{F}$ to conclude that it has a unique fixed point. \\
\indent Let  $\{ v^n \}_{n\in \mathbb{N}} \subset \mathcal{B}$ be an arbitrary $\|\cdot\|_{\mathcal{X}}$-converging sequence and let $v=\displaystyle \lim_{n\to \infty} v^n$ in $\mathcal{X}$. We need to show that $v \in \mathcal{B}$. The typical embeddings yield
\begin{alignat*}{2} v^n &\longrightarrow v &&\text{ strongly in } C([0,T];H_0^1 \cap H^3), \\
v_t^n &\longrightarrow v_t &&\text{ strongly in } C([0,T];H_0^1\cap H^2)\cap L^2(0,T;H^3), \\
v_{tt}^n &\longrightarrow v_{tt} &&\text{ strongly in } C([0,T];L^2)\cap L^2(0,T;H^1_0), 
\end{alignat*} 
from which it follows that $v(0)=\psi_0$ and $v_t(0)=\psi_1$. Moreover, 
\begin{align*}
\|v\|_{\mathcal{X}} &\leq \liminf_{n \to \infty} \|v^n-v\|_{\mathcal{X}}+\|v^n\|_{\mathcal{X}} \leq M. %,\\
%\|v_t\|_{L^\infty L^\infty} &\leq \liminf_{n \to \infty} \|v_t^n-v_t\|_{L^\infty L^\infty}+\|v^n_t\|_{L^\infty L^\infty} 
%\\ &\leq K_2K_3 \liminf_{n \to \infty} \|v_n-v\|_\mathcal{X}+\|v^n_t\|_{L^\infty L^\infty}  \leq m \leq \frac{1}{|k|}.
\end{align*}
Therefore we can conclude that $v \in \mathcal{B}$. \smallskip \\
%%%%%%%%%%%%%%%%%%%%%%%%%%%%%%% F is a self-mapping %%%%%%%%%%%%%%%%%%%%%%%%%%%%%%%%%%%%%%%%%
\noindent \textbf{$\mathcal{F}$ is a self-mapping.} Next we want to show that $\psi=\mathcal{F}v \in \mathcal{B}$ for every $v \in \mathcal{B}$. The equation \eqref{eq_psi_v} fits into the framework of Proposition \ref{Proposition2} if we set
\begin{alignat*}{3}
&\alpha&&=c^2(1+kv_t) &&\in W^{1,\infty,2}(0,T; W^{1,4}, L^4), \\
&\beta&&=2\nabla v &&\in W^{1,\infty, \infty}(0,T; W^{1,4}(\Omega,\R^d), L^4(\Omega,\R^d)), \\
&f &&= 0 &&  \in W^{1,2,2}(0,T;H^1,L^2).
\end{alignat*}
According to Proposition~\ref{Proposition2}, it follows that $\psi \in \mathcal{X}$. Thanks to the energy estimate \eqref{Prop2Energy}, if the initial data is small in the sense of
\begin{equation*} 
\begin{aligned}
 \widetilde{C}(T, M)(   |\psi_0|_{H^3}^2+|\psi_1|_{H^2}^2 ) \leq M^2,
\end{aligned}
\end{equation*}
it also follows that \\
\begin{align*}
 \|\psi\|_{\mathcal{X}} \leq M.
\end{align*}
The constant $\widetilde{C}(T, M)$ is obtained from $C_{\textup{P}_2}(T, \alpha, \beta)$ by replacing the $\alpha$ and $\beta$ terms by their upper bounds which depend on $M$. We infer that $\psi \in \mathcal{B}$. \smallskip \\
%%%%%%%%%%%%%%%%%%%%%%%%%%%%%%% F is contractive %%%%%%%%%%%%%%%%%%%%%%%%%%%%%%%%%%%%%%%%%
\noindent	\textbf{$\mathcal{F}$ is contractive.} To show contractivity, we take $v^{(1)}, v^{(2)} \in \mathcal{B}$ and set $\psi^{(1)}=\mathcal{F} v^{(1)}$, $\psi^{(2)}=\mathcal{F} v^{(2)} \in \mathcal{B}$, and $v=v^{(1)}-v^{(2)}$. The difference $\psi=\psi^{(1)}-\psi^{(2)}$ then satisfies the equation
\begin{equation*}
\begin{aligned}
\psi_{tt}-c^2(1+kv_t^{(1)}) \Delta \psi-b \Delta \psi_t=c^2 k  \Delta \psi^{(2)} v_t +2 \nabla v^{(1)}\cdot \nabla \psi_t+2  \nabla \psi_{t}^{(2)}\cdot \nabla v 
\end{aligned}
\end{equation*}
in the $L^\infty(0,T;L^2)$ sense with $(\psi, \psi_t)\vert_{t=0}=(0,0)$. This equation also fits into the framework of Proposition \ref{Proposition2} if we choose
\begin{alignat*}{3}
&\alpha&&=c^2(1+kv_t^{(1)}) &&\in W^{1,\infty,2}(0,T; W^{1,4}, L^4), \\
&\beta&&=2\nabla v^{(1)} &&\in W^{1,\infty, \infty}(0,T; W^{1,4}(\Omega,\R^d), L^4(\Omega,\R^d)), \\
&f &&= c^2k \Delta \psi^{(2)}v_t + 2 \nabla \psi_t^{(2)}\cdot \nabla v &&  \in W^{1,2,2}(0,T;H^1,L^2).
\end{alignat*}
Thus Proposition \ref{Proposition2} provides the energy estimate
$$\begin{aligned}
	& \| \psi \|^2_{L^\infty H^3} +\| \psi_t\|^2_{L^\infty H^2} +\|\psi_t\|_{L^2H^3}^2 +\|\psi_{tt}\|_{L^\infty L^2}^2+\|\psi_{tt}\|_{L^2 H^1}^2 \\
	\leq& \, \widetilde{C}(T, M) \big( \|f\|_{L^2L^2}^2+\|\nabla f\|_{L^2L^2}^2+\|f_t\|_{L^2L^2}^2\big),
\end{aligned}$$
We further estimate the $f$-terms on the right-hand side by
$$\begin{aligned}
&\|f\|_{L^2L^2}^2+\|\nabla f\|_{L^2L^2}^2+\|f_t\|_{L^2L^2}^2 \\ 
\leq& \, c^4 k^2 \|\psi^{(2)}\|_{L^2H^3}^2 \|v_t\|_{L^\infty L^\infty}^2 + 4 K_2^4 \|\psi_t^{(2)}\|_{L^2H^2}^2 \|v\|_{L^\infty H^3}^2 +c^4 k^2 \|\psi^{(2)}\|_{L^2 H^3}^2 \|v_t\|_{L^\infty L^\infty}^2 
\\ &+ c^4 k^2 K_5^2 \|\psi^{(2)}\|_{L^2 H^3}^2 \|v_t\|_{L^\infty H^2}^2 
+4 K_4^2 K_5^2 \|\psi_t^{(2)}\|_{L^2L^2}^2 \|v\|_{L^\infty H^3}^2 + 4 K_2^2 K_5^2 \|\psi_t^{(2)}\|_{L^2 H^2}^2 \|v\|_{L^\infty H^3}^2
\\&+ c^4k^2 \|\psi_t^{(2)}\|_{L^2 H^2}^2 \|v_t\|_{L^\infty L^\infty}^2 
+c^4 k^2K_1^2 K_5^2 \|\psi^{(2)}\|_{L^\infty H^3}^2 \|v_{tt}\|_{L^2 H^1}^2+ 4K_4^2 K_5^2 \|\psi_{tt}^{(2)}\|_{L^2 H^1}^2 \|v\|_{L^\infty H^3}^2 
\\ &+ 4 K_2^4 \|\psi_t^{(2)}\|_{L^2 H^3}^2 \|v_t\|_{L^\infty H^2} ^2
\end{aligned}$$
\noindent We can additionally make use of the estimate
\begin{align*}
\|\psi^{(2)}\|_{L^2H^3}^2\leq T\|\psi^{(2)}\|_{\mathcal{X}}^2 
%\leq T M^2 \, \widetilde{C}(T, M) \, (   |\psi_0|_{H^3}^2+|\psi_1|_{H^2}^2  ), 
\leq T M^2,
\end{align*}
and proceed similarly for the other terms involving the $L^2$-norm in time of $\psi^{(2)}, \psi_t^{(2)},\psi_{tt}^{(2)}$. In this way, we obtain
$$\begin{aligned}
    \|\mathcal{F}v^{(1)}-\mathcal{F}v^{(2)}\|_{\mathcal{X}}^2 
    %\leq \, T \, \widetilde{C}(T, M) \, (   |\psi_0|_{H^3}^2+|\psi_1|_{H^2}^2  )\, \|v\|_{\mathcal{X}}^2.
    \leq \, C M^2 \, \widetilde{C}(T,M) \,  (T+1) \, \|v\|_{\mathcal{X}}^2
\end{aligned}$$
Altogether for small enough $M$ and final time $T$, we have
\begin{align*}
\|\mathcal{F}v^{(1)}-\mathcal{F}v^{(2)}\|_{\mathcal{X}} \leq q \|v^{(1)}-v^{(2)}\|_{\mathcal{X}} \quad \text{for every } v^{(1)}, v^{(2)} \in \mathcal{B}, \quad \text{where} \ 0< q <1. \smallskip
\end{align*}
\noindent \textbf{Existence and uniqueness.} Finally, the existence and uniqueness of a solution to the initial-boundary value problem \eqref{Blackstock} for the Blackstock equation follow from the Banach fixed-point theorem. \smallskip\\
\noindent \textbf{Continuous dependence on the data.} It remains to prove that the solution depends continuously on the data. We introduce $\psi$ as the unique fixed point of $\mathcal{F}$ that corresponds to initial data $(\psi_0,\psi_1)\in H_0^1 \cap H^3 \times H_0^1 \cap H^2$ and parameter $M$. Further, let $\phi$ be the unique fixed point to a different contraction mapping that corresponds to data $(\phi_0,\phi_1)\in H_0^1\cap H^3 \times H_0^1\cap H^2$ and parameter $M_\phi$. We employ the triangle inequality 
$$\|\psi-\phi\|_\mathcal{X} \leq \|\mathcal{F} \psi-\mathcal{F}\phi\|_\mathcal{X} +\|\mathcal{F}\phi-\phi\|_\mathcal{X},$$
where we can already estimate the first norm by the contractivity of $\mathcal{F}$. Note that $\mathcal{F}\phi$ is the solution of 
$$(\mathcal{F}\phi)_{tt}-c^2(1+k\phi_t)\Delta \mathcal{F}\phi -b\Delta (\mathcal{F}\phi)_t = 2 \nabla \phi \cdot \nabla (\mathcal{F}\phi)_t$$
with initial conditions $(\mathcal{F}\phi)(0)=\psi_0, (\mathcal{F}\phi)_t(0)=\psi_1$.  Let us introduce $\widetilde{\phi}= \mathcal{F}\phi-\phi$. It holds that
\begin{equation} \label{EqContDependenceProof} \widetilde{\phi}_{tt} - c^2(1+k\phi_t)\Delta \widetilde{\phi} - b \Delta \widetilde{\phi}_t =2\nabla \phi \cdot \nabla \widetilde{\phi}_t 
\end{equation}
with initial conditions $\widetilde{\phi}(0)=\psi_0-\phi_0, \widetilde{\phi}_t(0)=\psi_1-\phi_1$.\\
\indent We already know the regularity of $\phi$ as it is the unique local solution of the Blackstock equation corresponding to the initial conditions $(\phi_0,\phi_1)$. Thus, we can apply Proposition~\ref{Proposition2} on the partial differential equation \eqref{EqContDependenceProof}, where we set $\alpha=c^2(1+k\phi_t)$, $\beta=2 \nabla \phi$, and $f=0$. In this way we obtain existence and uniqueness of a solution $\widetilde{\phi}$ that satisfies the energy estimate
\begin{equation} \label{EqContDependenceProof2} \|\mathcal{F}\phi - \phi \|_\mathcal{X}=\|\widetilde{\phi}\|_\mathcal{X} \leq  \widetilde{C}(T, M_\phi) \big(|\psi_0-\phi_0|_{H^3} + |\psi_1-\phi_1|_{H^2} \big). 
\end{equation} 
In order to prove continuous dependence on the data, we now employ the contractivity of $\mathcal{F}$ and the estimate \eqref{EqContDependenceProof2} in the following way
$$\begin{aligned}
 \|\psi-\phi\|_\mathcal{X} \leq& \, \|\mathcal{F}\psi-\mathcal{F} \phi\|_\mathcal{X} + \|\mathcal{F} \phi-\phi\|_X \\
 \leq& \, q\|\psi-\phi\|_\mathcal{X} + \widetilde{C}(T, M_\phi) \big(|\psi_0-\phi_0|_{H^3}+|\psi_1-\phi_1|_{H^2}\big), \quad 0<q<1.
\end{aligned}$$
Bringing the $q$-term to the left-hand side yields
$$\|\psi-\phi\|_\mathcal{X} \leq \frac{ \widetilde{C}(T, M_\phi)}{1-q}  \big(|\psi_0-\phi_0|_{H^3}+|\psi_1-\phi_1|_{H^2}\big), \quad 0< q <1,$$
which completes the proof.
\end{proof}
%%%%%%%%%%%%%%%%%%%%%%%%%%%%%%%% Global well-posedness %%%%%%%%%%%%%%%%%%%%%%%%%%%%%%%%%%%%%%%%
\section{Global well-posedness and energy decay}\label{section:global_wellposedness}
To obtain global well-posedness, we follow the general approach taken in \cite{KaltenbacherLasiecka_Westervelt, KaltenbacherLasiecka_Kuznetsov} and revisit the energy estimates from Propositions \ref{Proposition1} and \ref{Proposition2} for the nonlinear Blackstock equation to get an estimate independent of final time. We restrict ourselves now to the \textit{non-degenerate} case and look for a solution in the space
\begin{equation*} 
\begin{aligned} 
\hat{\mathcal{B}}=\Bigl\{v \in \mathcal{X} \ \Big\vert & \, (v, v_t)\vert_{t=0}=(\psi_0, \psi_1), \ \|v_t\|_{L^\infty(\Omega \times (0,T))}\leq m < 1/|k|,  \ \|v\|_{\mathcal{X}} \leq M \Bigr\},
\end{aligned} 
\end{equation*}
when $k \neq 0$. It can be shown analogously to Theorem~\ref{Theorem_Local_wellp} that a unique local-in-time solution exists in this space. If $k=0$, then we set $\hat{\mathcal{B}}=\mathcal{B}$.\\
\indent We introduce the energy as
\begin{equation} \mathcal{E}[\psi](t)=|\psi(t)|_{H^3}^2 + |\psi_t(t)|_{H^2}^2+|\psi_{tt}(t)|_{L^2}^2, \quad \text{for }\psi \in \mathcal{X}, \label{GlobalEnergy} \end{equation}
and the corresponding initial energy as $$\mathcal{E}[\psi](0)=|\psi_0|_{H^3}^2+|\psi_1|_{H^2}^2+| c^2(1+k \psi_1) \Delta \psi_0 + b \Delta \psi_1 + 2 \nabla \psi_0 \cdot \nabla \psi_1 |^2_{L^2}.$$
\begin{theorem}[Global well-posedness] \label{TheoremGlobal} Let $\Omega \subset \mathbb{R}^d$, where $d \in \{1,2,3\}$, be a bounded, open, and $C^{2,1}$ regular domain. Let $c^2>0$ and $k \in \mathbb{R}$. Furthermore, let $(\psi_0,\psi_1) \in (H_0^1 \cap H^3,H_0^1\cap H^2)$. Then there exists a $\kappa>0$, such that for $b$  sufficiently large, the solution of the initial-boundary value problem \eqref{Blackstock} corresponding to the initial data $\mathcal{E}[\psi](0)\leq \kappa$ is global in time. In other words, there exist positive constants $\kappa$ and $\mathcal{M}$, such that as long as $\mathcal{E}[\psi](0)\leq \kappa$, then 
\begin{align*}
\mathcal{E}[\psi](t) \leq\mathcal{M} \quad \text{for all} \ t\in [0,\infty).
\end{align*}
\end{theorem}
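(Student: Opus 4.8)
The plan is to upgrade the local solution from Theorem~\ref{Theorem_Local_wellp} to a global one by a continuation (barrier) argument: I would show that the energy $\mathcal{E}[\psi]$ defined in \eqref{GlobalEnergy} obeys an a priori bound that is \emph{independent of the final time} $T$, so that the local solution never leaves $\hat{\mathcal{B}}$ and, in particular, never approaches degeneracy. Concretely, by the local result a solution exists on some interval with $\mathcal{E}[\psi](t)\le \mathcal{M}$ and $\|\psi_t\|_{L^\infty(\Omega\times(0,T))}\le m<1/|k|$; I would let $T^\star$ be the supremum of times on which such a bound persists and aim to prove that, for $\kappa$ small and $b$ large, the bound is strictly improved on $[0,T^\star)$, forcing $T^\star=\infty$.

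The core is a time-uniform energy estimate for the \emph{nonlinear} equation. Propositions~\ref{Proposition1} and~\ref{Proposition2} cannot be invoked as black boxes, since their constants $C_{\textup{P}_1},C_{\textup{P}_2}$ grow like $e^{CT}$; instead I would repeat their testing scheme — testing \eqref{Blackstock} and its time derivative and its gradient with $\psi_t$, $-\Delta\psi$, $\psi_{tt}$, $-\Delta\psi_t$, then with $\psi_{tt}$ after differentiating in time, and with $-\nabla\Delta\psi_t$, $-\nabla\Delta\psi$ after differentiating in space — but now treating $\alpha=c^2(1+k\psi_t)$ and $\beta=2\nabla\psi$ through $\psi$ itself. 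The point is that $\|\alpha\|_{L^\infty}\le c^2(1+|k|m)$ is bounded, while $\alpha_t=c^2k\psi_{tt}$, $\nabla\alpha=c^2k\nabla\psi_t$, $\beta_t=2\nabla\psi_t$, and $\nabla\beta=2\nabla^2\psi$ all reduce to pieces of the energy and dissipation. Keeping the dissipative contributions $b|\nabla\psi_t|^2_{L^2}$, $b|\Delta\psi_t|^2_{L^2}$, $b|\nabla\Delta\psi_t|^2_{L^2}$, $b|\nabla\psi_{tt}|^2_{L^2}$ on the left with their genuine coefficients, and estimating every nonlinear and cross term by Young's $\eps$-inequality together with the embeddings \eqref{embed_constants} and the bound $\|\psi\|_\mathcal{X}\le M$, I expect to reach a differential inequality of the schematic form
\begin{equation*}
\frac{\textup{d}}{\textup{d}t}\mathcal{G}(t)+c_0\,\mathcal{E}(t)\le C\big(\sqrt{\mathcal{E}(t)}+\mathcal{E}(t)\big)\,\mathcal{E}(t),
\end{equation*}
where $\mathcal{G}$ is equivalent to $\mathcal{E}$ and $c_0>0$ is produced by the damping (via Poincaré's inequality \eqref{Poincare_constant}) once $b$ is taken large enough to absorb the cross-terms.

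With such an inequality in hand the barrier argument closes in the usual way. I would first note that the initial energy is finite because $\mathcal{E}[\psi](0)$ is defined to include $|\psi_{tt}(0)|^2_{L^2}$, obtained from \eqref{Blackstock} evaluated at $t=0$, which is exactly the term $|c^2(1+k\psi_1)\Delta\psi_0+b\Delta\psi_1+2\nabla\psi_0\cdot\nabla\psi_1|^2_{L^2}$. If $\mathcal{E}[\psi](0)\le\kappa$ with $\kappa$ small, then on the maximal interval the right-hand side $C(\sqrt{\mathcal{E}}+\mathcal{E})\mathcal{E}$ is dominated by $\tfrac{c_0}{2}\mathcal{E}$, so $\mathcal{G}$ is non-increasing and $\mathcal{E}(t)\le\mathcal{M}$ for a suitable $\mathcal{M}$ comparable to $\kappa$. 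This strictly improves the assumed barrier, and since the resulting control $\|\psi_t\|_{L^\infty}\lesssim\sqrt{\mathcal{E}}$ keeps $1+k\psi_t$ away from degeneracy, a contradiction with the maximality of $T^\star$ yields $T^\star=\infty$.

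The main obstacle is precisely the derivation of the time-uniform inequality: one must choose the combination of test functions and $\eps$-weights so that the strong damping controls the \emph{entire} $H^3$-energy, including the top spatial derivatives and $\psi_{tt}$, while every term generated by the nonlinearities — in particular $c^2k\,\psi_{tt}\Delta\psi$ and $2\nabla\psi\cdot\nabla\psi_{tt}$, which sit at the highest level — is absorbed either into the dissipation, using that $b$ is large, or into the cubic remainder $C\mathcal{E}^{3/2}$, using that $\mathcal{E}$ is small. Balancing the two requirements ``$b$ large'' and ``$\kappa$ small'' so that both the coercivity $c_0>0$ and the absorption succeed simultaneously is the delicate point of the argument.
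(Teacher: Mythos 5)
Your proposal follows essentially the same route as the paper's proof: the paper likewise revisits the testing scheme of Propositions~\ref{Proposition1} and~\ref{Proposition2} with the multipliers $-\Delta \psi$, $-\Delta \psi_t$, $\psi_{tt}$ for the time-differentiated equation and $-\nabla \Delta \psi$ for the space-differentiated one (all cut off by $\chi_{(0,t)}$, i.e.\ in time-integrated rather than differential form), avoids Gronwall, and uses smallness of $\|\psi\|_{\mathcal{X}}$ together with largeness of $b$ to keep the dissipation coefficients $C_1,\ldots,C_4$ in \eqref{global_wellp_constants} positive, arriving at the $T$-independent bound \eqref{GlobalEnergyEstimate2}. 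The remaining differences are organizational: the paper justifies the delicate multiplier $-\chi_{(0,t)}\Delta\psi_t$ for the time-differentiated equation via an auxiliary initial-boundary value problem (since $\Delta \psi_t \notin L^2(0,T;H_0^1)$), and it extends the solution globally by successively repeating the time-uniform estimate on the intervals $[T,2T], [2T,3T],\ldots$, which is precisely the continuation step that your barrier argument formalizes.
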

\begin{proof} We follow a similar procedure as in the proofs of Propositions \ref{Proposition1} and \ref{Proposition2}, but this time we avoid Gronwall's lemma to get a bound independent of $T$, where $T$ corresponds to the maximal time that guarantees local well-posedness in $\hat{\mathcal{B}}$ of the initial-boundary value problem \eqref{Blackstock} for the Blackstock equation. By utilizing this time-independent estimate, we are able to extend the local solution of Theorem \ref{Theorem_Local_wellp} to the global time domain $[0,\infty)$. We begin by testing the Blackstock equation \eqref{Blackstock} and its differentiated version with different functions. \smallskip  \\
\noindent \textit{Testing with $-\chi_{(0,t)} \Delta \psi$.} Since we know that $\Delta \psi \in L^2(0,T;L^2)$, we can test the Blackstock equation \eqref{Blackstock} with $-\chi_{(0,t)} \Delta \psi$. This action yields
\begin{equation} \label{est1_global_wellp}
    \begin{aligned}
     &(c^2-2\eps) \int_0^t |\Delta \psi(s)|_{L^2}^2 \, \textup{d}s + \frac{b}{2} |\Delta \psi(t)|^2_{L^2}
    \\ \leq& \, \frac{b}{2} |\Delta \psi_0|_{L^2}^2+ \frac{K_\textup{P}^2}{4\eps} \int_0^t |\nabla \psi_{tt}(s)|_{L^2}^2\, \textup{d}s+\frac{K_2^4}{\eps} \esssup_{s \in (0,T)} |\Delta \psi(s)|_{L^2}^2\int_0^t |\Delta \psi_t(s)|_{L^2}^2 \, \textup{d}s \\ &+ c^2|k| \esssup_{s \in (0,T)} |\psi_t(s)|_{L^\infty} \int_0^t |\Delta \psi(s)|_{L^2}^2 \, \textup{d}s,
    \end{aligned} 
\end{equation} 
for all $t\in [0,T]$, where $\varepsilon >0$. \smallskip  \\
\noindent \textit{Testing with $-\chi_{(0,t)}\Delta \psi_{t}$.} Since $\Delta \psi_t \in L^2(0,T;L^2)$, we next take $-\chi_{(0,t)}\Delta \psi_t$ as a test function in \eqref{Blackstock}.  In this way we obtain the estimate
\begin{equation} \label{est2_global_wellp}
\begin{aligned} &\frac12 |\nabla \psi_t(t)|_{L^2}^2+\frac{c^2}{2} |\Delta \psi(t)|_{L^2}^2+b\int_0^t |\Delta \psi_t(s)|_{L^2}^2 \, \textup{d}s \\
  \leq& \, \frac12 |\nabla \psi_1|_{L^2}^2  +\frac{c^2}{2} |\Delta \psi_0|_{L^2}^2  + \frac{K_2^4}{\eps} \esssup_{s\in (0,T)} |\Delta \psi_t(s)|_{L^2}^2 \int_0^t |\Delta \psi_t(s)|_{L^2}^2\, \textup{d}s \\
  &+\frac{c^4k^2}{4\eps} \esssup_{s \in (0,T)} |\psi_t(s)|_{L^\infty}^2 \int_0^t |\Delta \psi_t(s)|_{L^2}^2 \, \textup{d}s+2\eps \int_0^t |\Delta \psi(s)|_{L^2}^2 \, \textup{d}s ,
\end{aligned} 
\end{equation} 
for all $t\in [0,T]$, where $\eps>0$. \smallskip  \\
\noindent \textit{Testing the time-differentiated equation with $ \chi_{(0,t)}\psi_{tt}$.} Since the time-differentiated version of the Blackstock equation is satisfied in the $L^2(0,T;H^{-1})$ sense, we can test it with functions in $L^2(0,T;H_0^1)$. Testing with $\chi_{(0,t)} \psi_{tt} \in L^2(0,T;H_0^1)$ yields
\begin{equation}
\begin{aligned} 
& \frac12 |\psi_{tt}(t)|^2_{L^2}+\frac{c^2}{2} |\nabla \psi_t(t)|_{L^2}^2+ (b-4\eps) \int_0^t  |\nabla \psi_{tt}(s)|^2_{L^2}\, \textup{d}s  \\
\leq & \, \frac12|\psi_{tt}(0)|^2_{L^2}+\frac{c^2}{2} |\nabla \psi_1|^2_{L^2}+ \frac{c^4k^2 K_1^4+4K_1^2 K_2^2}{4\eps}\esssup_{s\in (0,T)} |\Delta \psi(s)|_{L^2}^2 \int_0^t |\nabla \psi_{tt}(s)|_{L^2}^2\, \textup{d}s
\\ &+ \frac{K_2^4 K_\textup{P}^2}{\eps} \esssup_{s\in (0,T)} |\Delta \psi_t(s)|_{L^2}^2  \int_0^t |\Delta \psi_t(s)|_{L^2}^2 \, \textup{d}s  +\frac{c^4k^2K_\text{P}^2}{4\eps} \esssup_{s\in (0,T)} | \psi_t(s)|_{L^\infty }^2  \int_0^t |\Delta \psi_t(s)|_{L^2}^2 \, \textup{d}s,
\end{aligned}  
\end{equation} 
 for all $t\in [0,T]$ and some $\eps>0$. \smallskip \\
 \noindent \textit{Testing the time-differentiated equation with $ -\chi_{(0,t)} \Delta \psi_{t}$.} Note that we are not allowed to directly test the time-differentiated equation with $ -\chi_{(0,t)} \Delta \psi_{t}$ since $\Delta \psi_{t} \notin L^2(0,T;H_0^{1})$. However, we can employ a similar trick as before and look at the following initial-boundary value problem
\begin{align*} %\label{diff_problem_global}
\begin{cases}
v_{tt}-c^2 \Delta v-b \Delta v_t=h(x,t) \quad \text{in } \Omega \times (0,T),
\smallskip\\ v=0 \quad \text{ on }  \partial \Omega \times (0,T),  
\smallskip\\ (v, v_t)=(\psi_1, \, c^2(1+k\psi_1) \Delta \psi_0+b\Delta \psi_1+ 2\nabla \psi_0 \cdot \nabla \psi_1) \quad \text{ on }   \Omega \times \{t=0\},
\end{cases}
\end{align*}
where the right side of the partial differential equation is given by
\begin{equation*}
\begin{aligned}
h=c^2 k (\psi_{t}\Delta \psi_{t}+\psi_{tt}\Delta \psi)+2( \nabla \psi \cdot \nabla \psi_{tt} +|\nabla \psi_t|^2) \in L^2(0,T; L^2).
\end{aligned}
\end{equation*}
It can be shown by employing the Galerkin approximation that if $b$ is sufficiently large it holds that
\begin{equation*}
\begin{aligned}
    \frac{b}{4}\|\Delta v\|^2_{L^\infty L^2}+(c^2-\eps)\|\Delta v\|^2_{L^2 L^2} \leq \frac{b+1}{2}|\Delta v(0)|^2_{L^2}+\frac{c^2}{2}|\nabla v(0)|^2_{L^2}+|v_t(0)|^2_{L^2}+\frac{1}{2\eps} \|h\|^2_{L^2L^2},
\end{aligned}
\end{equation*}
first in a discretized setting and then via weak limits also in the continuous one. By noting that $\psi=\psi_0 + \int_0^t v(x,s)\, \textup{d}s$, from here we further obtain
\begin{equation} \label{est3_global_wellp}
\begin{aligned}
  &\frac{b}{4}|\Delta \psi_t(t)|^2_{L^2} +(c^2-\eps)\int_0^t |\Delta \psi_t(s)|^2_{L^2}\, \textup{d}s \\
  \leq& \, \frac{b+1}{2}|\Delta \psi_1|^2_{L^2}+\frac{c^2}{2}|\nabla \psi_1|^2+|\psi_{tt}(0)|^2_{L^2}\\
  &+\frac{c^4k^2}{2\eps}\bigg(\esssup_{s \in (0,T)}|\psi_t(s)|^2_{L^\infty}\int_0^t |\Delta \psi_t(s)|_{L^2}^2 \, \textup{d}s+ K^2_1 \esssup_{s \in (0,T)} |\Delta \psi (s)|^2_{L^4}\int_0^t |\nabla \psi_{tt}(s)|^2_{L^2}\, \textup{d}s \bigg)\\
  &+\frac{2}{\eps}\bigg(K_4^2\esssup_{s \in (0,T)} |\Delta \psi (s)|^2_{L^4}\int_0^t |\nabla \psi_{tt}(s)|^2_{L^2}\, \textup{d}s+K^4_{2} \esssup_{s \in (0,T)} |\Delta \psi_t (s)|^2_{L^2}\int_0^t |\Delta \psi_{t}(s)|^2_{L^2}\, \textup{d}s\bigg),
\end{aligned}
\end{equation}
for all $t \in [0,T]$. Here $K_4<\infty$ denotes the embedding constant from \eqref{embed_constants} and by employing the embedding $H_0^1 \cap H^3 \hookrightarrow H_0^1 \cap W^{2,4}$ we can make the term
$$\esssup_{s \in (0,T)} |\Delta \psi(s)|_{L^4}^2 \leq K_5^2 \esssup_{s \in (0,T)} |\psi(s)|_{H^3}^2  \leq K_5^2 M^2$$
sufficiently small.\\
\noindent \textit{Testing the space-differentiated equation with $-\chi_{(0,t)}\nabla \Delta \psi$.} Note that the solution provided by Theorem \ref{Theorem_Local_wellp} satisfies the Blackstock equation in the $L^2(0,T;H_0^1)$ sense. Thus it is well-defined to test the space-differentiated version with test functions in $L^2(0,T;L^2(\Omega,\mathbb{R}^d))$.
Taking $-\chi_{(0,t)}\nabla \Delta \psi \in L^2(0,T;L^2(\Omega,\mathbb{R}^d))$  as a test function in the space-differentiated Blackstock equation gives us 
\begin{equation} \label{est4_global_wellp}
\begin{aligned} &(c^2-3\eps) \int_0^t|\nabla \Delta \psi(s)|_{L^2}^2 \, \textup{d}s+\frac{b}{2} |\nabla\Delta \psi(t)|^2_{L^2} 
\\ \leq& \, \frac{b}{2} |\nabla \Delta \psi_0|_{L^2}^2 + \frac{1}{4\eps} \int_0^t|\nabla \psi_{tt}(s)|_{L^2}^2\, \textup{d}s + c^2|k| \esssup_{s \in (0,T)} |\psi_t(s)|_{L^\infty} \int_0^t |\nabla \Delta \psi(s)|_{L^2}^2 \, \text{d}s\\
&+ \esssup_{s\in (0,T)}|\Delta \psi_t(s)|_{L^2} \big(K_2 K_5(2+c^2|k|)+2 K_4 K_5  \big) \int_0^t|\nabla \Delta \psi(s)|_{L^2}^2 \, \textup{d}s \\
& + \frac{4K_4^2 K_5^2+K_2^2K_5^2(4+c^4k^2)}{4\eps}  \esssup_{s \in (0,T)} |\Delta \psi_t(s)|_{L^2}^2 \int_0^t |\Delta \psi(s)|^2 \, \text{d}s,
\end{aligned}
\end{equation}
for all $t\in [0,T]$. After adding the  derived upper bounds \eqref{est1_global_wellp}-\eqref{est4_global_wellp}, we obtain 
\begin{equation} \label{GlobalEstimate1}
    \begin{aligned}
        &\frac{b}{2} |\nabla \Delta \psi(t)|_{L^2}^2 +\frac{b+c^2}{2} |\Delta \psi(t)|_{L^2}^2+ \frac{b}{4} |\Delta \psi_t(t)|_{L^2}^2 + \frac12 |\psi_{tt}(t)|_{L^2}^2 + C_1\int_0^t|\Delta \psi(s)|_{L^2}^2 \, \textup{d}s
        \\
        & +C_2 \int_0^t|\Delta \psi_t(s) |_{L^2}^2\, \textup{d}s + C_3\int_0^t |\nabla \psi_{tt}(s)|_{L^2}^2\, \textup{d}s 
        + C_4 \int_0^t |\nabla \Delta \psi(s)|_{L^2}^2 \, \textup{d}s
        \\ \leq& \, \frac12 \max\big\{b+c^2,3+b+2c^2\big\} \big( |\psi_0|_{H^3}^2 + |\psi_1|_{H^2}^2+|\psi_{tt}(0)|_{L^2}^2 \big),
    \end{aligned}
\end{equation}
for all $t\in [0,T]$. The constants appearing above are given by
\begin{equation} \label{global_wellp_constants}
\begin{aligned}
  C_1 =& \, c^2-4\eps-c^2|k| \|\psi_t\|_{L^\infty L^\infty}- \|\Delta \psi_t\|_{L^\infty L^2}^2 \frac{4K_4^2K_5^2+K_2^2 K_5^2 (4+c^4k^2)}{4\eps} ,
\\  C_2 =& \, b+c^2-\eps -\|\Delta \psi\|_{L^\infty L^2}^2 \frac{K_2^4}{\eps} -\|\Delta \psi_t\|_{L^\infty L^2}^2 \frac{3K_2^4+K_2^4 K_{\text{P}}^2}{\eps}  -\|\psi_t\|_{L^\infty L^\infty}^2 \frac{c^4k^2(3+K_{\text{P}}^2)}{4\eps},
\\  C_3 =& \, b-4\eps- \frac{1+K_\textup{P}^2}{4 \eps} - \|\Delta \psi\|_{L^\infty L^2}^2 \frac{c^4k^2 K_1^4 + 4K_1^2 K_2^2}{4\eps}  - \|\Delta \psi\|_{L^\infty L^4}^2 \frac{c^4k^2K_1^2+2K_4^2}{2\eps}, 
%\\  C_4 &= b - \frac{4 K_2^2 K_4^2+4 K_4^2 K_5^2+c^4k^2 K_2^2 K_3^2 +c^4 k^2 K_2^2 K_4^2 }{4\eps}\|\Delta \psi_t\|_{L^\infty L^2}^2 ,
\\  C_4 =& \,  c^2 - 3\eps - c^2|k|\|\psi_t\|_{L^\infty L^\infty} - \|\Delta \psi_t\|_{L^\infty L^2} \big(K_2K_5(2+c^2|k|)+2K_4K_5\big), \\
\end{aligned}
\end{equation}
We note that the assumption of non-degeneracy is crucial to ensure that $C_1>0$. For small enough $\|\psi\|_\mathcal{X}$ and large enough $b$, we can guarantee that $C_2,...,C_4$ are positive constants as well.  \\ 
\indent By using the energy function $\mathcal{E}[\psi]$ we introduced in \eqref{GlobalEnergy}, we can rewrite \eqref{GlobalEstimate1} as
\begin{equation}
\begin{aligned}
& \min\big\{2,b\big\}\  \mathcal{E}[\psi](t) +2 \min\big\{C_1+C_4,C_2, K_\text{P}^{-2}\, C_3\big\} \int_0^t \mathcal{E}[\psi](s) \, \textup{d}s \\
\leq& \, C \max\big\{b+c^2,3+b+2c^2\big\}  \ \mathcal{E}[\psi](0) \label{GlobalEnergyEstimate2}
\end{aligned}
\end{equation}
for all $t \in [0,T]$. From here it follows that
\begin{align*} 
\min\big\{2,b\big\} \ \mathcal{E}[\psi](t) \leq \, C \max\big\{b+c^2,3+b+2c^2\big\} \ \mathcal{E}[\psi](0) 
\end{align*}
for all $t \in [0, T]$, where the constant $C$ is independent of $T$. We can then derive a similar estimate for $t \in [T, 2T]$
\begin{align*} 
\min\big\{2,b\big\} \ \mathcal{E}[\psi](t) \leq \, C \max\big\{b+c^2,3+b+2c^2\big\} \ \mathcal{E}[\psi](T). 
\end{align*}
Successively repeating this action yields the energy inequality \eqref{GlobalEnergyEstimate2} for all $t\in [0,\infty)$ and thus the global well-posedness of the initial-boundary value problem \eqref{Blackstock} for the Blackstock equation follows.
\end{proof}
\noindent We are next interested in obtaining a rate for the decay of the energy \eqref{GlobalEnergy}. 
\begin{theorem}[Energy decay] \label{TheoremExpDecay} Let the assumptions of Theorem~\ref{TheoremGlobal} hold. Then there exists a constant $\omega>0$ such that the energy $\mathcal{E}[\psi](\cdot)$ decays exponentially with decay rate $\omega$, that means
$$\mathcal{E}[\psi](t) \leq Ce^{-\omega t}\mathcal{E}[\psi](0).$$
\end{theorem}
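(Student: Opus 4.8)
The plan is to recycle the dissipative energy estimate obtained in the proof of Theorem~\ref{TheoremGlobal} and to upgrade it to a translation-invariant inequality, from which exponential decay will follow by a standard integral argument. The crucial structural feature of \eqref{GlobalEstimate1} is that its left-hand side contains, besides the instantaneous energy evaluated at the upper endpoint, a genuine dissipation integral whose integrand controls the full energy $\mathcal{E}[\psi]$ from below: by elliptic regularity for $H_0^1\cap H^3$ and $H_0^1\cap H^2$ functions together with Poincaré's inequality \eqref{Poincare_constant}, one has $|\psi|_{H^3}^2\lesssim |\Delta\psi|_{L^2}^2+|\nabla\Delta\psi|_{L^2}^2$, $|\psi_t|_{H^2}^2\lesssim |\Delta\psi_t|_{L^2}^2$, and $|\psi_{tt}|_{L^2}^2\leq K_\text{P}^2|\nabla\psi_{tt}|_{L^2}^2$. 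This is precisely what permits \eqref{GlobalEstimate1} to be recast as \eqref{GlobalEnergyEstimate2}, and it shows that the dissipation in \eqref{GlobalEnergyEstimate2} is comparable to $\mathcal{E}[\psi]$ whenever the constants $C_1,\dots,C_4$ from \eqref{global_wellp_constants} are positive, which is guaranteed for small $\|\psi\|_\mathcal{X}$ and large $b$ exactly as in Theorem~\ref{TheoremGlobal}.

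First I would re-run the testing procedure of Theorem~\ref{TheoremGlobal} on an arbitrary interval $(s,t)$ instead of $(0,t)$, that is, testing with the cut-offs $\chi_{(s,t)}$ in place of $\chi_{(0,t)}$. Since the smallness condition $\|\psi\|_\mathcal{X}\leq M$ and the non-degeneracy persist for all times by the already established global well-posedness, the constants remain uniform in $s$, and the boundary contributions at the lower endpoint are controlled by $\mathcal{E}[\psi](s)$ in the same manner in which the terms at $t=0$ were grouped into $\mathcal{E}[\psi](0)$. This yields, for all $0\leq s\leq t$, a shifted version of \eqref{GlobalEnergyEstimate2},
\begin{equation*}
a_1\,\mathcal{E}[\psi](t) + a_2\int_s^t\mathcal{E}[\psi](\tau)\,\textup{d}\tau \leq a_3\,\mathcal{E}[\psi](s),
\end{equation*}
with $a_1=\min\{2,b\}$, $a_2=2\min\{C_1+C_4,C_2,K_\text{P}^{-2}C_3\}$, and $a_3=C\max\{b+c^2,3+b+2c^2\}$ the very constants already appearing in \eqref{GlobalEnergyEstimate2}.

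To conclude, taking $s=0$ shows $\int_0^\infty\mathcal{E}[\psi]\,\textup{d}\tau\leq (a_3/a_2)\,\mathcal{E}[\psi](0)<\infty$, so that $\Phi(s):=\int_s^\infty\mathcal{E}[\psi](\tau)\,\textup{d}\tau$ is well-defined, continuously differentiable (recall that $\mathcal{E}[\psi]$ is continuous since $\psi\in C([0,\infty);H^3)$, $\psi_t\in C([0,\infty);H^2)$, $\psi_{tt}\in C([0,\infty);L^2)$), and satisfies $\Phi'(s)=-\mathcal{E}[\psi](s)$. Letting $t\to\infty$ in the shifted inequality and using $\liminf_{t\to\infty}\mathcal{E}[\psi](t)=0$ gives $a_2\Phi(s)\leq a_3\,\mathcal{E}[\psi](s)=-a_3\Phi'(s)$, hence $\Phi(s)\leq\Phi(0)e^{-(a_2/a_3)s}$ by the differential form of Gronwall's inequality. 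Finally, the shifted inequality also yields $\mathcal{E}[\psi](t)\leq (a_3/a_1)\,\mathcal{E}[\psi](s)$ for every $s\in[t-1,t]$; integrating this in $s$ over $[t-1,t]$ and inserting the bound on $\Phi$ produces $\mathcal{E}[\psi](t)\leq (a_3/a_1)\,\Phi(t-1)\leq Ce^{-\omega t}\mathcal{E}[\psi](0)$ for $t\geq 1$ with $\omega=a_2/a_3$, while the range $t\in[0,1]$ is covered directly by $\mathcal{E}[\psi](t)\leq(a_3/a_1)\,\mathcal{E}[\psi](0)$. The step I expect to be the main obstacle is the rigorous justification that the lower-endpoint boundary terms in the shifted estimate are genuinely bounded by $\mathcal{E}[\psi](s)$ with an $s$-independent constant; this hinges on the time-uniform smallness and non-degeneracy furnished by Theorem~\ref{TheoremGlobal}, without which the dissipation constants $C_1,\dots,C_4$ could fail to remain positive and the comparison between dissipation and energy would break down.
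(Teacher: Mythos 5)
Your proposal is correct and follows the paper's route: you re-run the estimates of Theorem~\ref{TheoremGlobal} with the shifted cutoffs $\chi_{(s,t)}$, observe that the constants $C_1,\dots,C_4$ stay uniform in $s$ because the essential suprema are now taken over $(0,\infty)$ and are controlled by the global bound $\mathcal{M}$, and arrive at the translation-invariant inequality $a_1\,\mathcal{E}[\psi](t)+a_2\int_s^t\mathcal{E}[\psi]\,\textup{d}\tau\leq a_3\,\mathcal{E}[\psi](s)$, exactly as in the paper. The only divergence is the final step: the paper closes by citing a modification of the Haraux--Lagnese inequality \cite[Theorem 1.5.9]{qin2016analytic} as a black box, whereas you prove that lemma from scratch --- passing to $t\to\infty$ to get $a_2\Phi(s)\leq -a_3\Phi'(s)$ for $\Phi(s)=\int_s^\infty\mathcal{E}[\psi]\,\textup{d}\tau$, applying differential Gronwall, and then converting the decay of $\Phi$ back into pointwise decay of $\mathcal{E}[\psi]$ via the quasi-monotonicity $\mathcal{E}[\psi](t)\leq (a_3/a_1)\mathcal{E}[\psi](s)$ and averaging over $[t-1,t]$. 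This unpacking is correct, properly accounts for the fact that $\mathcal{E}[\psi]$ is only quasi-monotone rather than nonincreasing, and yields the same decay rate $\omega=a_2/a_3$ as the paper's; it buys self-containedness at the cost of a slightly longer argument.
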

\begin{proof}
We employ the same estimates in the proof of Theorem~\ref{TheoremGlobal}, but instead of taking $\chi_{(0,t)}$ as a prefactor for the test functions, we now choose $\chi_{(s,t)}$ for $0 \leq s\leq t<\infty$. For instance, testing the Blackstock equation with $-\chi_{(s,t)} \Delta \psi$ gives an estimate similar to \eqref{est1_global_wellp}
\begin{equation*} 
    \begin{aligned} 
     &(c^2-2\eps) \int_s^t |\Delta \psi(\tau)|_{L^2}^2 \, \textup{d}\tau + \frac{b}{2} |\Delta \psi(t)|^2_{L^2}
    \\ \leq& \, \frac{b}{2} |\Delta \psi(s)|_{L^2}^2+ \frac{K_\textup{P}^2}{4\eps} \int_s^t |\nabla \psi_{tt}(\tau)|_{L^2}^2\, \textup{d}\tau +\frac{K_2^4}{\eps} \esssup_{\tau \in (0,\infty)} |\Delta \psi(\tau)|_{L^2}^2\int_s^t |\Delta \psi_t(\tau)|_{L^2}^2 \, \textup{d}\tau
    \\ &+ c^2|k| \esssup_{\tau \in (0,\infty)} |\psi_t(\tau)|_{L^\infty}^2 \int_s^t |\Delta \psi(\tau)|_{L^2}^2 \, \textup{d}\tau,
    \end{aligned} 
\end{equation*} 
We proceed in the same way for the other estimates. In this way, we arrive at the energy inequality
\begin{align*}
& \min\big\{2,b \big\} \mathcal{E}[\psi](t)+ 2\min\big\{C_1+C_4,C_2, K_\text{P}^{-2} \, C_3\big\} \int_s^t \mathcal{E}[\psi](\tau)\, \textup{d}\tau \\
\leq& \,  C \max\big\{2b+c^2,3+b+2c^2\big\} \mathcal{E}[\psi](s),
\end{align*}
where $C_1,...,C_4$ are the constants from \eqref{global_wellp_constants} with the difference that the essential supremum in the norms is taken over the global time domain $(0,\infty)$.
From here we first conclude that $0\leq E[\psi](t)\leq C E[\psi](s)$ for $s \leq t$
%, which means $E[\psi](\cdot)$ is non-increasing and non-negative. 
and letting $t \to \infty$ yields the inequality
$$  \int_s^\infty \mathcal{E}[\psi](\tau)\, \textup{d}\tau \leq C\frac{\max\{b+c^2,3+b+2 c^2\}}{\min\{C_1+C_4,C_2,K_\text{P}^{-2} \, C_3\}} \mathcal{E}[\psi](s)$$
for all $s \in [0,\infty)$. Applying a modification of the Haraux-Lagnese inequality \cite[Theorem 1.5.9]{qin2016analytic} provides us with the desired exponential decay estimate
$$\mathcal{E}[\psi](s) \leq Ce^{-\omega t} \mathcal{E}[\psi](0),$$
where the decay rate is given by 
$$\omega = C\ \frac{\min\{C_1+C_4,C_2,K_\text{P}^{-2} \, C_3\}}{\max\{b+c^2,3+b+2 c^2\}}.$$
\end{proof}
\noindent We can see that the decay rate goes to zero in the case of overdamping when $b \rightarrow \infty$.
\subsection{The one-dimensional case}
In the one-dimensional case when $\Omega \subset \mathbb{R}$, the situation simplifies considerably since we can make use of the embedding $$\psi_t \in L^\infty(0,T; H_0^1) \hookrightarrow L^\infty(0,T; L^\infty).$$
We can employ the fixed point approach relying only on Proposition~\ref{Proposition1} and show local well-posedness analogously to Theorem~\ref{Theorem_Local_wellp}. Moreover, we can employ the same techniques as in Theorem~\ref{TheoremGlobal} and Theorem~\ref{TheoremExpDecay} to obtain global well-posedness and the decay of the energy
 \begin{align*} 
 E(t)=|\psi(t)|_{H^2}^2+|\psi_t(t)|_{H^1}^2.
\end{align*}
 We state here the result in the one-dimensional case without proof.
\begin{theorem}\label{theorem:1D} Let $\Omega \subset \mathbb{R}$ be a finite interval and let $b$, $c^2>0$, and $k \in \mathbb{R}$. Assume that $(\psi_0,\psi_1) \in (H_0^1\cap H^2, H_0^1)$ and that $$|\psi_0|_{H^2}^2+|\psi_1|_{H^1}^2 \leq \kappa, \quad \kappa>0.$$
Then for sufficiently small $\kappa$ and final time $T>0$, there exists a unique solution $\psi$ of the initial-boundary value problem \eqref{Blackstock} for the Blackstock equations with the regularity
$$\begin{cases} \psi \in C([0,T];H_0^1\cap H^2), \\ \psi_t \in C([0,T];H_0^1)\cap L^2(0,T;H^2), \\ \psi_{tt} \in L^2(0,T;L^2). \end{cases}$$
The solutions depends continuously on the initial data. Furthermore, for sufficiently large $b$, the solution is global in time and its energy decays exponentially with rate $\omega>0$, that means
\begin{align*} 
E(t) \leq C e^{-\omega t} E(0).
\end{align*}
\end{theorem}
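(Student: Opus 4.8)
The plan is to rerun the fixed-point and energy arguments of Theorems~\ref{Theorem_Local_wellp}, \ref{TheoremGlobal}, and \ref{TheoremExpDecay}, but with the lower-regularity linear result Proposition~\ref{Proposition1} replacing Proposition~\ref{Proposition2} throughout. The single structural fact that makes this work in one space dimension is the embedding $H_0^1\hookrightarrow L^\infty$: the $L^\infty(0,T;H^1)$-regularity of $\psi_t$ already furnished by Proposition~\ref{Proposition1} immediately yields $\psi_t\in L^\infty(\Omega\times(0,T))$. In the general $d\in\{2,3\}$ case one needed the $H^3$-machinery of Proposition~\ref{Proposition2} to reach this bound through $H^2\hookrightarrow L^\infty$; here it is automatic, so no $H^3$-regularity is required anywhere and the whole argument descends one level in regularity.

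For local well-posedness I would set up $\mathcal{F}:v\mapsto\psi$ on the closed ball of radius $M$ carrying the data constraint $(v,v_t)|_{t=0}=(\psi_0,\psi_1)$ in the space
$$\mathcal{X}_1=\{v: v\in L^\infty(0,T;H_0^1\cap H^2),\ v_t\in L^\infty(0,T;H_0^1)\cap L^2(0,T;H^2),\ v_{tt}\in L^2(0,T;L^2)\},$$
where $\psi$ solves the frozen-coefficient equation \eqref{eq_psi_v}. To invoke Proposition~\ref{Proposition1} one checks $\alpha=c^2(1+kv_t)\in L^\infty(\Omega\times(0,T))$ (from $v_t\in L^\infty(0,T;H_0^1)\hookrightarrow L^\infty(0,T;L^\infty)$, the one genuinely one-dimensional step) and $\beta=2\nabla v\in L^\infty(0,T;L^4)$ (from $\nabla v\in L^\infty(0,T;H^1)\hookrightarrow L^\infty(0,T;L^4)$). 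The energy estimate \eqref{Prop1Energy} then gives the self-mapping property for small $\kappa$, since $C_{\textup{P}_1}$ depends only on $\|\alpha\|_{L^\infty(\Omega\times(0,T))}$ and $\|\beta\|_{L^\infty L^4}$, both controlled by $M$. For contractivity I would subtract the equations for $\psi^{(i)}=\mathcal{F}v^{(i)}$, obtaining as in Theorem~\ref{Theorem_Local_wellp} a linear problem for $\psi=\psi^{(1)}-\psi^{(2)}$ with forcing $f=c^2 k\, v_t\,\Delta\psi^{(2)}+2\nabla\psi_t^{(2)}\cdot\nabla v$; the key point is $f\in L^2(0,T;L^2)$ with $\|f\|_{L^2L^2}\lesssim\sqrt{T}\,M\,\|v\|_{\mathcal{X}_1}$, using $v_t,\nabla v\in L^\infty(0,T;L^\infty)$ together with $\Delta\psi^{(2)},\nabla\psi_t^{(2)}\in L^2(0,T;L^2)$, whence Proposition~\ref{Proposition1} yields contraction for small $M$ and $T$. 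Banach's theorem then gives existence and uniqueness, and continuous dependence follows verbatim from the triangle-inequality argument at the end of Theorem~\ref{Theorem_Local_wellp}.

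For the global bound and the decay I would mirror Theorems~\ref{TheoremGlobal} and \ref{TheoremExpDecay}, working in the non-degenerate ball $\|\psi_t\|_{L^\infty(\Omega\times(0,T))}\leq m<1/|k|$, but now testing the Blackstock equation \eqref{Blackstock} only with $-\chi_{(0,t)}\Delta\psi$ and $-\chi_{(0,t)}\Delta\psi_t$. These two estimates already control $E(t)=|\psi(t)|_{H^2}^2+|\psi_t(t)|_{H^1}^2$ once elliptic regularity and Poincaré's inequality are used to bound the full $H^2$- and $H^1$-norms by $|\Delta\psi|_{L^2}$ and $|\nabla\psi_t|_{L^2}$. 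Combining them and estimating the nonlinearities via $\|\psi_t\|_{L^\infty L^\infty}\leq m<1/|k|$ and the smallness of $\|\psi\|_{\mathcal{X}_1}$ produces, for $b$ large enough, a differential inequality $\min\{2,b\}\,E(t)+c\int_0^t E(s)\,\textup{d}s\leq C\,E(0)$ with $c>0$ that is the one-dimensional analogue of \eqref{global_wellp_constants} with fewer terms; dropping the integral term and iterating over $[jT,(j+1)T]$ extends the solution to $[0,\infty)$, and replacing $\chi_{(0,t)}$ by $\chi_{(s,t)}$ followed by the Haraux-Lagnese inequality yields the exponential decay exactly as in Theorem~\ref{TheoremExpDecay}.

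The main obstacle is not conceptual but one of bookkeeping: I must re-verify that the reduced set of test functions still generates enough coercive integral terms to close a $T$-independent, Gronwall-free estimate, i.e. that the one-dimensional counterparts of the constants $C_1,\dots,C_4$ in \eqref{global_wellp_constants} can be made simultaneously positive by taking $b$ large and the energy small. Everything else is a faithful, lower-regularity transcription of the preceding proofs, with $H_0^1\hookrightarrow L^\infty$ doing the work that Proposition~\ref{Proposition2} performed in higher dimensions.
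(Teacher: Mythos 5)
Your proposal is correct and follows exactly the route the paper itself indicates for this theorem (which it states without detailed proof): exploit the one-dimensional embedding $H_0^1\hookrightarrow L^\infty$ so that Proposition~\ref{Proposition1} alone supports the fixed-point argument of Theorem~\ref{Theorem_Local_wellp}, and then rerun the $T$-independent, Gronwall-free energy estimates of Theorems~\ref{TheoremGlobal} and~\ref{TheoremExpDecay} at the lower regularity level for $E(t)=|\psi(t)|_{H^2}^2+|\psi_t(t)|_{H^1}^2$. Your additional observation that only the two tests $-\chi_{(0,t)}\Delta\psi$ and $-\chi_{(0,t)}\Delta\psi_t$ are needed (with the cross term $\int_0^t|\nabla\psi_t|^2$ absorbed by $b\int_0^t|\Delta\psi_t|^2$ for $b$ large, via Poincar\'e) is a sound instance of the paper's "same techniques" and closes the argument as claimed.
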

%%%%%%%%%%%%%%%%%%%%%%%%%%%%%%%% Numerical treatment %%%%%%%%%%%%%%%%%%%%%%%%%%%%%%%%%%%%%%%%
\section{Numerical treatment of the Blackstock equation}\label{section:numerical_treatment}
\indent We now focus our attention on the numerical treatment of the Blackstock equation. We follow the well-explored numerical strategies for nonlinear sound propagation~\cite{Hoffelner, kagawa1992finite, manfred, muhr2017isogeometric, walsh2007finite}, where finite elements are employed in space and the Newmark or the Generalized-$\alpha$ scheme is used for time stepping. In particular, we employ B-splines as basis functions within the framework of Isogeometric Analysis (IGA); see~\cite{cottrell2009isogeometric, hughes2005isogeometric}. The use of B-splines as basis functions in nonlinear acoustics can be found in earlier works on Burgers' equation; see for example~\cite{davies1978application,zhu2009numerical}. \\
\indent By approximating the acoustic velocity potential as
\begin{align*}
    \psi \approx \displaystyle \sum_{i=1}^{n_\textup{eq}} N_i \, \psi_i,
\end{align*}
and using the same shape functions for the test space, we arrive at the following semi-discrete form
\begin{equation} \label{Semidiscrete_weak_form}
\begin{aligned}
\displaystyle  \sum_{j=1}^{n_\textup{eq}} \Bigl \{& \int_\Omega N_i N_j \, \textup{d}x \, \ddot{\psi}_j+c^2 \int_\Omega \nabla N_i \cdot \nabla N_j \, \textup{d}x \, \psi_j +b \int_\Omega \nabla N_i \cdot \nabla N_j \, \textup{d}x \, \dot{\psi}_j  \\
&+c^2k\int_\Omega (\nabla N_i \cdot \nabla N_j)\Bigl(\sum_{l=1}^{n_\textup{eq}} N_l \dot{\psi_l}\Bigr) \, \textup{d}x \, \psi_j \\
&-(2-c^2k)\int_\Omega   N_i \, \Bigl (\nabla N_j \cdot \nabla \sum_{l=1}^{n_\textup{eq}} N_l \dot{\psi_l}\Bigr)\, \textup{d}x \, \psi_j \Bigr \}=0,
\end{aligned}
\end{equation}
for all $t \in (0,T]$. If we denote by $\underline{w}=[w_1 \ldots w_{n_\textup{eq}}]$ the representation of a function $w$ in the basis $\{N_i\}_{i=1}^{n_\textup{eq}}$, then the initial conditions should be satisfied in the sense of
\begin{align} \label{discrete_initial}
(\underline{\psi}, \underline{\dot{\psi}})\vert_{t=0}=(\underline{\psi}_0, \underline{\dot{\psi}}_0).
\end{align} We can rewrite \eqref{Semidiscrete_weak_form}, \eqref{discrete_initial} in the matrix form as follows
\begin{equation} \label{matrix_equation}
\begin{aligned}
\begin{cases}
\mathbf{M} \ddot{\underline{\psi}}+\mathbf{K}\underline{\psi}+\mathbf{C} \dot{\underline{\psi}}+ \bm{\mathcal{T}}(\dot{\underline{\psi}})\, \underline{\psi} - \bm{\mathcal{N}}(\dot{\underline{\psi}})\, \underline{\psi}=\bm{0},\\
\underline{\psi}(0)=\underline{\psi}_0, \\
\dot{\underline{\psi}}(0)=\underline{\dot{\psi}}_0,
\end{cases}
\end{aligned}
\end{equation}
where the mass, stiffness, and damping matrices are given by
\begin{equation*}
\begin{aligned}
& \mathbf{M}= \bigwedge\limits_{e=1}^{n_e} [m_{pq}^e], \quad m_{p q}^e= \int_{\Omega^e} N_{p}N_{q} \,  \textup{d}x, \\
& \mathbf{K}= \bigwedge\limits_{e=1}^{n_e} [\kappa_{pq}^e], \quad \kappa_{p q}^e= c^2 \int_{\Omega^e} \nabla N_{p} \cdot \nabla N_{q} \,  \textup{d}x, \\
& \mathbf{C}= \bigwedge\limits_{e=1}^{n_e} [c_{pq}^e], \quad c_{p q}^e= b \int_{\Omega^e} \nabla N_{p} \cdot \nabla N_{q} \,  \textup{d}x, \end{aligned}
\end{equation*}
where $n_e$ denotes the number of elements $\Omega^e$.
The tensors can be computed via
\begin{equation*}
\begin{aligned}
& \bm{\mathcal{T}}(\dot{\underline{\psi}})= \bigwedge\limits_{e=1}^{n_e} [T_{pq}^e], \quad T_{pq}^e= c^2k \, \int_{\Omega^e} (\nabla N_p \cdot \nabla N_{q}) \Bigl(\sum_{r=1}^{n_\textup{eq}} N_r \dot{\psi}_r \Bigr) \,  \textup{d}x,\\
& \bm{\mathcal{N}}(\dot{\underline{\psi}})= \bigwedge\limits_{e=1}^{n_e} [N_{pq}^e], \quad N_{pq}^e= (2-c^2k) \, \int_{\Omega^e} N_p \, \nabla N_{q} \cdot \Bigl(\nabla \sum_{r=1}^{n_\textup{eq}} N_r \dot{\psi}_r \Bigr) \,  \textup{d}x.
\end{aligned}
\end{equation*}
\subsection{Choice of the time-integration scheme}\indent In nonlinear ultrasound propagation, the number of higher harmonics to the fundamental wave frequency grows with the distance from the source. To resolve higher harmonics, a fine spatial mesh and time discretization are typically needed. Moreover, we have to be aware of Gibbs' phenomenon, which occurs when a discontinuous function is approximated by smooth functions and causes spurious oscillations on the wave peaks. For this reason, we use the Generalized-$\alpha$ method \cite{chung1993time} for the time discretization. The scheme allows us to control the numerical dissipation that is introduced to higher harmonics, while minimally affecting the lower frequencies; see also the discussion in \cite{muhr2017isogeometric} for the Westervelt equation.\\ 
\indent Let $0=t_0< \ldots <t_m=T$ be a given partition of the time interval $[0,T]$. To simplify exposition, we assume a uniform grid with time step $\Delta t$ and define $\psi_n=\psi(\cdot, n \Delta t)$, $n\in [0, m]$. Applying the Generalized $\alpha$-scheme results in the system
%\enlargethispage{1cm}
\begin{equation*} 
\begin{aligned}
&\mathbf{M} \ddot{\underline{\psi}}_{n+1-\alpha_m}+\mathbf{C} \dot{\underline{\psi}}_{n+1-\alpha_f}+\mathbf{K} \underline{\psi}_{n+1-\alpha_f} \\
&+\bm{\mathcal{T}}(\dot{\underline{\psi}}_{n+1-\alpha_f})\underline{\psi}_{n+1-\alpha_f}-\bm{\mathcal{N}}(\dot{\underline{\psi}}_{n+1-\alpha_f})\underline{\psi}_{n+1-\alpha_f}= \,\bm{0},
\end{aligned}
\end{equation*}
where the mid-point values are defined as follows
\begin{align*}
& \ddot{\underline{\psi}}_{n+1-\alpha_m}=(1-\alpha_m)\ddot{\underline{\psi}}_{n+1}+\alpha_m \underline{\ddot{\psi}}_n, \\
& \dot{\underline{\psi}}_{n+1-\alpha_f}=(1-\alpha_f)\dot{\underline{\psi}}_{n+1}+\alpha_f \underline{\dot{\psi}}_n,\\
& \underline{\psi}_{n+1-\alpha_f}=(1-\alpha_f)\underline{\psi}_{n+1}+\alpha_f \underline{\psi}_n,
\end{align*}
for $n\in [0, m-1]$.  We then use the classical Newmark scheme \cite{newmark1959method} with parameters $\beta$ and $\gamma$:
\begin{align*}
\underline{\psi}_{n+1}&=\underline{\psi}_{n}+\Delta t \, \underline{\dot{\psi}}_{n+1}+\frac{1}{2}\Delta t^2((1-2\beta)\underline{\ddot{\psi}}_{n}+2\beta\underline{\ddot{\psi}}_{n+1}), \\
\underline{\dot{\psi}}_{n+1}&=\underline{\dot{\psi}}_{n}+\Delta t((1-\gamma)\underline{\ddot{\psi}}_{n}+\gamma\underline{\ddot{\psi}}_{n+1}).
\end{align*}
The scheme is realized through predictor-corrector steps. At this point, we also introduce the effective mass matrix
$$\mathbf{\overline{M}}=(1-\alpha_m)\mathbf{M}+\gamma (1-\alpha_f)\Delta t \mathbf{C}+ \beta (1-\alpha_f)\Delta t^2 \mathbf{K};$$
see also \cite[Section 2.5]{manfred}. Since the Blackstock equation is nonlinear, we employ an iterative scheme to solve it. \\ \\
\begin{algorithm}[H]
 \SetAlgoLined
\caption{Numerical solver for the Blackstock equation with initial data} \label{algorithm_Blackstock} \vspace{2mm}
\textbf{Input}: initial data  $\underline{\psi}_0$, $\dot{\underline{\psi}}_0$ \vspace{2mm} \\
$t=0$, $n=0$\vspace{2mm}\\
$\mathbf{M} \, \ddot{\underline{\psi}}_n=-\mathbf{C} \dot{\underline{\psi}}_n-\mathbf{K} \underline{\psi}_n -\bm{\mathcal{T}}(\dot{\underline{\psi}}_n)\, \underline{\psi}_n + \bm{\mathcal{N}}(\dot{\underline{\psi}}_n)\underline{\psi}_n$\vspace{2mm}\\
\While{$t\leq T$ \vspace{1mm}}{ 
\text{perform predictor step} \vspace{1mm} \\
$\underline{\psi}_{\textup{pred}}=\underline{\psi}_{n}+\Delta t \dot{\underline{\psi}}_n+\displaystyle \frac{\Delta t^2}{2}(1-2\beta)\ddot{\underline{\psi}}_n$\vspace{2mm}\\
$\dot{\underline{\psi}}_{\textup{pred}}=\dot{\underline{\psi}}_n+(1-\gamma)\Delta t \ddot{\underline{\psi}}_n$\vspace{2mm}\\
$\mathbf{P}=-\alpha_m \mathbf{M} \, \ddot{\underline{\psi}}_n-\mathbf{K} ((1-\alpha_f)\underline{\psi}_{\textup{pred}}+\alpha_f \underline{\psi}_n)-\mathbf{C}( (1-\alpha_f)\dot{\underline{\psi}}_{\textup{pred}}+\alpha_f \underline{\dot{\psi}}_n)$ \vspace{1mm} \\
$\kappa=0$\vspace{2mm}\\
$\underline{\psi}_{n+1}^{\kappa}=\underline{\psi}_{\textup{pred}}$\vspace{2mm}\\
$\dot{\underline{\psi}}_{n+1}^{\kappa}=\underline{\dot{\psi}}_{\textup{pred}}$\vspace{2mm}\\
\While{$\displaystyle \frac{\|\ddot{\underline{\psi}}^{\kappa+1}_{n+1}-\ddot{\underline{\psi}}^\kappa_{n+1}\|}{\|\ddot{\underline{\psi}}^{\kappa+1}_{n+1}\|} > \textup{TOL}$ \vspace{1mm}}{
 \text{solve the algebraic system of equations} \vspace{2mm} \\
 $\mathbf{\overline{M}} \, \ddot{\underline{\psi}}^{\kappa+1}_{n+1} 
 	=\, \mathbf{P}-\bm{\mathcal{T}}(\underline{\dot{\psi}}_{n+1-\alpha_f}^\kappa)\underline{\psi}_{n+1-\alpha_f}^\kappa+\bm{\mathcal{N}}(\underline{\dot{\psi}}_{n+1-\alpha_f}^\kappa)\underline{\psi}_{n+1-\alpha_f}^\kappa
 $\vspace{1mm} \\
\hspace{-0mm}\text{perform corrector step} \vspace{2mm}  \\
$\underline{\psi}_{n+1}^{\kappa+1}=\underline{\psi}_{\textup{pred}}+\beta \Delta t^2 \ddot{\underline{\psi}}^{\kappa+1}_{n+1}$\vspace{2mm}\\
$\dot{\underline{\psi}}^{\kappa+1}_{n+1}=\dot{\underline{\psi}}_{\textup{pred}}+\gamma \Delta t \ddot{\underline{\psi}}^{\kappa+1}_{n+1}$\vspace{2mm}\\
$\kappa \mapsto \kappa+1$\\} 
$n \mapsto n+1$, $\, t \mapsto t+\Delta t$} 
\end{algorithm}
\medskip

Note that the effective mass matrix does not change with time in our algorithm. Therefore, we can compute its LU decomposition only in the first time step and use it when solving the algebraic system in the following time steps. \\
\indent After obtaining the acoustic velocity potential $\psi$, the acoustic pressure is then computed in a post-processing step as $u = \varrho \psi_t$.
\subsection{Neumann boundary conditions}
We also briefly comment on how the case of having inhomogeneous Neumann data would be numerically treated. In the presence of the Neumann boundary conditions
$$ \frac{\partial \psi}{\partial n}=g \quad \text{on} \quad \Gamma_n,$$ the equation \eqref{matrix_equation} changes into
\begin{equation*}
\begin{aligned}
\begin{cases}
\mathbf{M} \ddot{\underline{\psi}}+\mathbf{K}\underline{\psi}+\mathbf{C} \dot{\underline{\psi}}+ \bm{\mathcal{T}}(\dot{\underline{\psi}})\, \underline{\psi} - \bm{\mathcal{N}}(\dot{\underline{\psi}})\, \underline{\psi}=\bm{F}+\bm{\mathcal{B}}(\dot{\underline{\psi}}),\\
\underline{\psi}(0)=\underline{\psi}_0, \\
\dot{\underline{\psi}}(0)=\underline{\psi}_1.
\end{cases}
\end{aligned}
\end{equation*}
The terms on the right-hand side are given by 
\begin{equation*}
\begin{aligned}
& \bm{F}= \bigwedge\limits_{e=1}^{n_e} [F_{pq}], \quad F_{pq}=  \, \int_{\Gamma_n^e} (c^2g+b\dot{g}) N_p \,  \textup{d}x,\\
& \bm{\mathcal{B}}(\dot{\underline{\psi}})= \bigwedge\limits_{e=1}^{n_e} [B_{pq}], \quad B_{pq}=  c^2 k \, \int_{\Gamma_n^e}  g N_p \,  \Bigl( \sum_{r=1}^{n_\textup{en}} N_r \dot{\psi}_r \Bigr) \,  \textup{d}x.
\end{aligned}
\end{equation*}
The boundary tensor $\bm{\mathcal{B}}(\dot{\underline{\psi}})$ is a new term compared to the Kuznetsov equation. It originates from the term $\psi_t \Delta \psi$ in the equation. \\
\indent Considerations of how to avoid spurious reflections from the computational boundary in numerical simulations are beyond the scope of the present work. We refer the reader interested in this topic to the existing literature on absorbing boundary conditions \cite{engquist1977absorbing, shevchenko2015absorbing} and the perfectly matched layer (PML) technique \cite{berenger1994perfectly, manfred}. 
%%%%%%%%%%%%%%%%%%%%%%%%%%%%%%%% Numerical experiments %%%%%%%%%%%%%%%%%%%%%%%%%%%%%%%%%%%%
\section{Numerical experiments} \label{section:numerical_experiments}
We illustrate the behavior of the Blackstock equation through several numerical experiments in $1$D and $2$D settings for different parameter values.\\ 
\indent We set the Newmark parameters to $(\beta, \gamma)=(0.45, 0.75)$ and the Generalized-$\alpha$ parameters to $(\alpha_m, \alpha_f)=(1/2, 1/3)$, since they provided good results in simulations of the Westervelt equation, see \cite{muhr2017isogeometric}. The tolerance is set to $\textup{TOL}=10^{-8}$ in all experiments. All the numerical results have been obtained with the help of the GeoPDEs package~\cite{vazquez2016new} in MATLAB.
\subsection{Experiments in 1D} We first test the model in a $1$D channel example. As our medium, we use water with
\begin{align} \label{water_coefficients}
    \begin{cases}
    \text{the speed of sound} \ c=1500 \ \textup{m}/\textup{s},\\
    \text{the parameter of nonlinearity} \ B/A=5, \\
    \text{the sound diffusivity} \ b=6 \cdot 10^{-9} \ \textup{m}^2/\textup{s}, \ \text{and} \\
    \text{the density} \ \varrho=1000 \ \textup{kg}/\textup{m}^3;
    \end{cases}
\end{align}
 see~\cite[Ch. 5]{manfred}. %Recall that condition \eqref{degeneracy_condition} has to be fulfilled to avoid degeneracy of the equation, which in this water setting corresponds to the upper pressure bound of around $450 \ \textup{MPa}$. In practice this bound is quite high and allows for sufficiently large data to be taken. 
 We set the channel length to $\textup{l}=0.4 \ \textup{m}$ and the initial data to 
\begin{equation} \label{initial_data_experiments}
\begin{aligned}
    & \psi_0=0, \\
    & \psi_1= \mathcal{A} \cdot \textup{exp}\Bigl (-\frac{(x-\mu)^2}{2\sigma^2}\Bigr),
\end{aligned}
\end{equation}
with $\mathcal{A}=3\cdot 10^5 \,\textup{m}^2/\textup{s}^2$, $\mu=0.2$, $\sigma^2=0.01$. We first use cubic B-Splines as basis functions with the maximum $C^2$ global regularity. We take $801$ degrees of freedom in space and $801$ degrees of freedom in time with final time set to $T=0.1 \ \textup{ms}$. Figure~\ref{fig:channel_initial_data_snapshots} shows the evolution of the pressure wave over time. Average number of iterations per time step needed to converge to a solution was around $6$.
\begin{figure}[H]
\begin{center}
\input{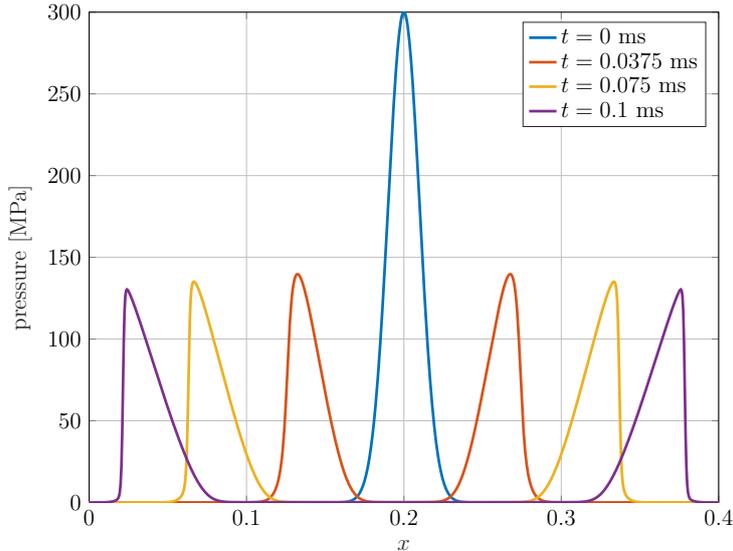}
\end{center}
\caption{Evolution of the pressure wave at different times \label{fig:channel_initial_data_snapshots}}
\end{figure}
Next we want to compare the performance of linear and quadratic splines as basis functions. We take the pressure computed with cubic splines on the fine grid as the reference solution. In Figure~\ref{fig:channel_initial_data_different_order}, we see the pressure profiles at final time when employing linear splines and quadratic splines with global $C^1$ regularity on a coarser grid which has $501$ degrees of freedom. We observe that quadratic splines perform much better, resulting in a wave that almost matches the reference solution. The error of the computed solutions is higher in the region where the wave front is steep. 
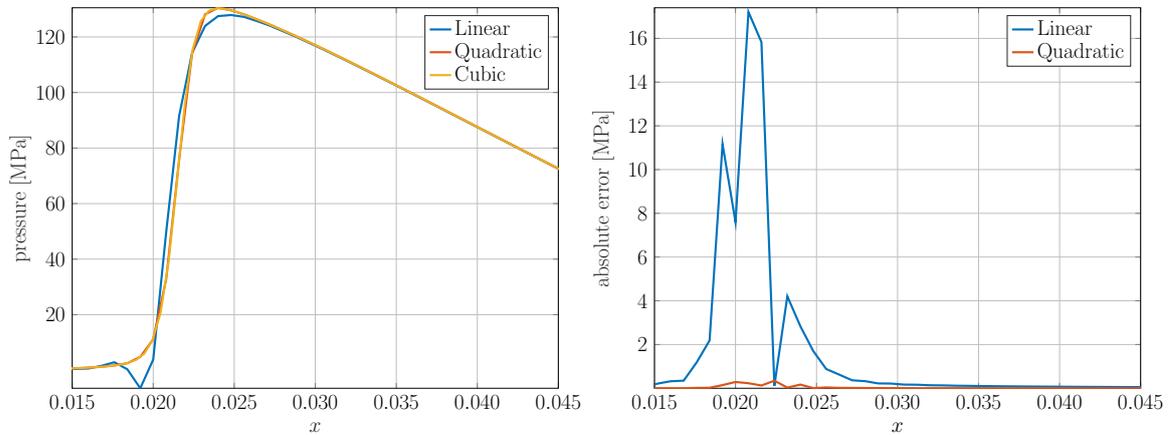
\begin{figure}[H]
\begin{center}
\begin{minipage}{.45\textwidth}
% This file was created by matlab2tikz.
%
\definecolor{mycolor1}{rgb}{0.00000,0.44700,0.74100}%
\definecolor{mycolor2}{rgb}{0.85000,0.32500,0.09800}%
\definecolor{mycolor3}{rgb}{0.92900,0.69400,0.12500}%
\begin{tikzpicture}[scale=0.47][font=\huge]

\begin{axis}[%
width=5.358in,
height=4.226in,
scaled ticks=false,
at={(0.899in,0.57in)},
scale only axis,
x tick label style={
        /pgf/number format/.cd,
            fixed,
            fixed zerofill,
            precision=3,
        /tikz/.cd
            },
xmin=0.015,
xmax=0.045, 
xtick={0.015, 0.020, 0.025, 0.030, 0.035, 0.040, 0.045},
xlabel style={font=\color{white!15!black}},
xlabel={$x$},
ymin=-6.49574941278012,
ymax=130.506413357092, ytick={20, 40, 60, 80, 100, 120},
ylabel style={at={(axis description cs:-.07,.5)}, font=\color{white!15!black}},
ylabel={pressure [MPa]},
axis background/.style={fill=white},
xmajorgrids,
ymajorgrids,
legend style={legend cell align=left, align=left, draw=white!15!black}
]
\addplot [color=mycolor1][line width=0.6mm]
  table[row sep=crcr]{%
0.0143999999999949	0.487011351421117\\
0.015199999999993	0.487700712975979\\
0.0160000000000053	0.603470897074715\\
0.0168000000000035	1.57636172543083\\
0.0176000000000016	2.86784372372048\\
0.0183999999999997	0.350276015443441\\
0.0191999999999979	-6.49574941278013\\
0.019999999999996	3.84835035065767\\
0.0207999999999942	49.7203926547292\\
0.0216000000000065	91.6088636796557\\
0.0224000000000046	114.227302853108\\
0.0232000000000028	123.955524726647\\
0.0240000000000009	127.497191269787\\
0.024799999999999	127.895758836778\\
0.0255999999999972	127.229297294708\\
0.0263999999999953	125.719608629682\\
0.0271999999999935	124.064274461188\\
0.0288000000000039	120.089732863975\\
0.0304000000000002	115.775220214196\\
0.0319999999999965	111.268227163621\\
0.033600000000007	106.636565961491\\
0.0352000000000032	101.921307267713\\
0.0367999999999995	97.1514644319994\\
0.0391999999999939	89.9419982851165\\
0.0431999999999988	77.9174157931987\\
0.0447999999999951	73.1436647274256\\
0.0455999999999932	70.770419929989\\
};
\addlegendentry{Linear}

\addplot [color=mycolor2][line width=0.6mm]
  table[row sep=crcr]{%
0.0143999999999949	0.534524848001212\\
0.015199999999993	0.704178503379723\\
0.0159999999999911	0.92389391734983\\
0.0167999999999893	1.22981607611035\\
0.0175999999999874	1.70631355112459\\
0.018400000000014	2.51342159394559\\
0.0192000000000121	4.81555404068496\\
0.0200000000000102	11.1574495341941\\
0.0208000000000084	32.7446818212571\\
0.0216000000000065	75.9037623643755\\
0.0224000000000046	113.996266129503\\
0.0232000000000028	128.132993407869\\
0.0240000000000009	130.506413357092\\
0.024799999999999	129.589275740855\\
0.0255999999999972	128.147340450225\\
0.0263999999999953	126.324324035316\\
0.0271999999999935	124.44548775675\\
0.0279999999999916	122.398373641408\\
0.0287999999999897	120.320617553827\\
0.0295999999999879	118.147029583981\\
0.030399999999986	115.947113940781\\
0.0312000000000126	113.688733129164\\
0.0320000000000107	111.407951308153\\
0.033600000000007	106.754761597989\\
0.0352000000000032	102.023557234317\\
0.0367999999999995	97.2412796616834\\
0.039999999999992	87.6046092448535\\
0.043200000000013	77.9760229318128\\
0.0448000000000093	73.1970568465725\\
0.0456000000000074	70.8214225494455\\
};
\addlegendentry{Quadratic}

\addplot [color=mycolor3][line width=0.6mm]
  table[row sep=crcr]{%
0.0149625935162021	0.648030190294463\\
0.0154613466334297	0.768038248466922\\
0.0159600997506288	0.911676094938883\\
0.016458852867828	1.08598997839232\\
0.0169576059850272	1.30333574618757\\
0.0174563591022547	1.58734415507524\\
0.0179551122194539	1.98897561203816\\
0.0184538653366531	2.6227263494853\\
0.0189526184538522	3.75625395611871\\
0.0194513715710798	6.00863477826226\\
0.0199501246882789	10.7451103721102\\
0.0204488778054781	20.5885870251399\\
0.0209476309227057	38.9963385705131\\
0.0214463840399048	66.5176977791942\\
0.021945137157104	95.4030275355022\\
0.0224438902743032	115.684980665465\\
0.0229426433915307	125.71677954902\\
0.0234413965087299	129.435299567922\\
0.0239401496259291	130.327311188873\\
0.0244389027431282	130.083770727403\\
0.0249376558603558	129.372367982506\\
0.0254364089775549	128.442361960055\\
0.0259351620947541	127.392698541581\\
0.0264339152119817	126.265355469742\\
0.0269326683291808	125.08172062237\\
0.02743142144638	123.853623396762\\
0.0279301745635792	122.589040046175\\
0.0284289276808067	121.293477721313\\
0.0289276807980059	119.971277822816\\
0.0294264339152051	118.625822733911\\
0.0304239401496318	115.876050037935\\
0.0314214463840301	113.062016115842\\
0.0324189526184568	110.197146340596\\
0.0334164588528552	107.291979341685\\
0.0349127182044811	102.876943782624\\
0.0364089775561069	98.4132301521321\\
0.038403990024932	92.4170184259281\\
0.0433915211970088	77.4022897533641\\
0.0453865336658339	71.4542937721525\\
};
\addlegendentry{Cubic}

\end{axis}
\end{tikzpicture}%
\end{minipage}
\hspace{5mm}
\begin{minipage}{.45\textwidth}
% This file was created by matlab2tikz.
%
\definecolor{mycolor1}{rgb}{0.00000,0.44700,0.74100}%
\definecolor{mycolor2}{rgb}{0.85000,0.32500,0.09800}%
\begin{tikzpicture}[scale=0.47][font=\huge]

\begin{axis}[%
width=5.358in,
height=4.226in,
scaled ticks=false,
at={(0.899in,0.57in)},
x tick label style={
        /pgf/number format/.cd,
            fixed,
            fixed zerofill,
            precision=3,
        /tikz/.cd
            },
scale only axis,
xmin=0.015,
xmax=0.045, 
xtick={0.015, 0.020, 0.025, 0.030, 0.035, 0.040, 0.045},
xlabel={$x$},
ymin=0,
ymax=17.4, ytick={2, 4, 6, 8, 10, 12, 14, 16},
ylabel style={font=\color{white!15!black}},
ylabel={absolute error [MPa]},
axis background/.style={fill=white},
xmajorgrids,
ymajorgrids,
legend style={legend cell align=left, align=left, draw=white!15!black}
]
\addplot [color=mycolor1][line width=0.6mm]
  table[row sep=crcr]{%
0.0143999999999984	0.0482260925965612\\
0.0152000000000001	0.214837710234139\\
0.0160000000000018	0.320901357834408\\
0.0167999999999999	0.347482345225337\\
0.0176000000000016	1.18036857175741\\
0.0183999999999997	2.18655978909697\\
0.0192000000000014	11.1653971629357\\
0.0199999999999996	7.59675538558199\\
0.0208000000000013	17.206048109746\\
0.0215999999999994	15.8319085483117\\
0.0224000000000011	0.114918458654984\\
0.0231999999999992	4.20707362203471\\
0.0240000000000009	2.84075061982684\\
0.024799999999999	1.70146665072831\\
0.0256000000000007	0.879163188911321\\
0.0263999999999989	0.624375319075181\\
0.0272000000000006	0.364159530454533\\
0.0279999999999987	0.324326656217739\\
0.0288000000000004	0.222363804243297\\
0.0295999999999985	0.214979203606262\\
0.0304000000000002	0.167638376884131\\
0.0311999999999983	0.162019647225424\\
0.032	0.137627491210356\\
0.0328000000000017	0.131007437665314\\
0.0335999999999999	0.117176130828888\\
0.0344000000000015	0.110514004843235\\
0.0351999999999997	0.101760348329574\\
0.0360000000000014	0.0957634607109128\\
0.0367999999999995	0.0895821960600465\\
0.0376000000000012	0.0844555820910493\\
0.0383999999999993	0.0796929743888839\\
0.039200000000001	0.0753791545115696\\
0.0399999999999991	0.0714900993304717\\
0.0408000000000008	0.067848105915786\\
0.041599999999999	0.0645571261318771\\
0.0424000000000007	0.0614444250781396\\
0.0431999999999988	0.0585956329192356\\
0.0440000000000005	0.0558939651086909\\
0.0447999999999986	0.0533866872503026\\
0.0456000000000003	0.0510039386759402\\
};
\addlegendentry{Linear}

\addplot [color=mycolor2][line width=0.6mm]
  table[row sep=crcr]{%
0.0144	0.000712596016458122\\
0.0152	0.00164008016959694\\
0.016	0.000478337559285003\\
0.0168	0.000936695904864004\\
0.0176000000000001	0.0188383991615237\\
0.0184	0.0234142105948073\\
0.0192000000000001	0.145906290529416\\
0.0200000000000001	0.287656202045547\\
0.0208	0.230337276273932\\
0.0216000000000001	0.126807233031511\\
0.0224	0.345955182260111\\
0.0232000000000001	0.0296049408127069\\
0.0240000000000001	0.168471467477292\\
0.0248	0.00794974665154519\\
0.0256000000000001	0.0388799666059613\\
0.0264	0.0196599134410024\\
0.0272000000000001	0.0170537651075274\\
0.0280000000000001	0.0110462510077357\\
0.0288	0.00852088560874759\\
0.0296000000000001	0.00565690545138714\\
0.0304000000000001	0.00425534970110653\\
0.0312000000000001	0.00280928111468254\\
0.0320000000000001	0.0020966533215046\\
0.0328000000000001	0.0013592488564253\\
0.0336000000000001	0.00101950566877423\\
0.0344000000000001	0.000642877298623334\\
0.0352000000000001	0.000489618274569503\\
0.0360000000000001	0.000297056454092282\\
0.0368000000000001	0.000233033624023216\\
0.0376000000000001	0.000133863309547289\\
0.0384000000000001	0.000110239302307358\\
0.0392000000000001	5.86299412548286e-05\\
0.0400000000000001	5.1981622979036e-05\\
0.0408000000000001	2.48064738362919e-05\\
0.0416000000000001	2.44647731930114e-05\\
0.0424000000000001	1.00512096732897e-05\\
0.0432000000000001	1.15056948959968e-05\\
0.0440000000000001	3.84468708930497e-06\\
0.0456000000000001	1.31921951473934e-06\\
};
\addlegendentry{Quadratic}

\end{axis}
\end{tikzpicture}%
\end{minipage}
\end{center}
\caption{\textbf{Left:} Comparison of the solutions at final time $t=0.1$ ms computed by using linear and quadratic splines against the reference solution; \textbf{Right:} Error of the solutions obtained by using linear and quadratic splines} \label{fig:channel_initial_data_different_order}
\end{figure}
\indent Since quadratic splines result in a reasonably accurate solution, we continue with employing quadratic basis functions with maximum $C^1$ global regularity in the following experiments. \\
\indent Next we want to see how the damping parameter $b$ and the parameter of nonlinearity $B/A$ influence the behavior of the model. A similar study was provided in \cite{hoffelner2002simulation} for the Kuznetsov equation with Neumann data. We also discuss how the size of the initial data affects the pressure wave and plot the energy decay with respect to time. \\
\subsubsection{Influence of the strong damping} First, we set all the parameters as before in \eqref{water_coefficients}, but vary the sound diffusivity $$b \in \{0, 0.1, 1, 10, 100\}\,\textup{m}^2/\textup{s}.$$ %\, \textup{m}^2/\textup{s}.$$
Figure~\ref{fig:channel_initial_data_different_b} displays how $b$ influences the wave behavior. The snapshots were taken at $t=0.8 \, \textup{ms}$. Note that when $b \approx 0 \, \textup{m}^2/\textup{s}$ and the amplitude of the initial data is sufficiently large, a steepening of the wave front can be observed. Eventually this nonlinear steepening will develop into a vertical wave front; see also Figure~\ref{fig:channel_initial_data_snapshots}. As $b$ increases, the amplitude of the wave is damped and the nonlinear behavior gets less and less pronounced. For larger values of $b$, we can see that steepening of the wave ceases, meaning that there is no formation of higher harmonics. 
\begin{figure}[H]
\input{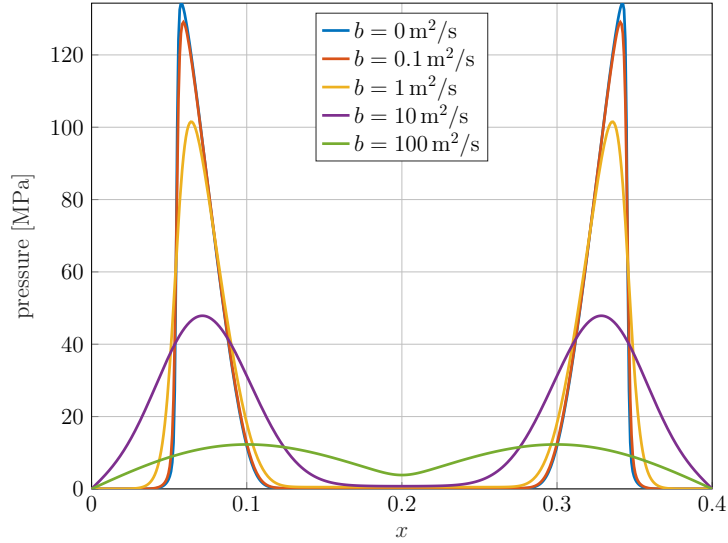}
\caption{Influence of the $b$-damping on the behavior of the model  \label{fig:channel_initial_data_different_b}}
\end{figure}
\subsubsection{Influence of the data size} Next we want to illustrate how the smallness of the initial data influences the behavior and regularity of the pressure wave. We take the same water setting as before with coefficients as in \eqref{water_coefficients}. We then plot snapshots of the ultrasound wave $t= 0.9 \, \textup{ms}$ developed from the initial wave given by \eqref{initial_data_experiments}, but we take different amplitudes for the Gaussian function $\psi_1$: $$\mathcal{A} \in \{1.5, 5, 8, 15\} \cdot 10^4 \,\textup{m}^2/\textup{s}^2.$$
Figure~\ref{fig:channel_initial_data_different_size} displays the influence that the size of the initial data has on the behavior of the wave. As seen in the previous experiment, for high-amplitude waves an obvious steepening of the wave front can be observed. When the initial data is sufficiently small, the behavior resembles the one of a linear wave with almost no distortion as the wave travels. 
\begin{figure}[H]
\input{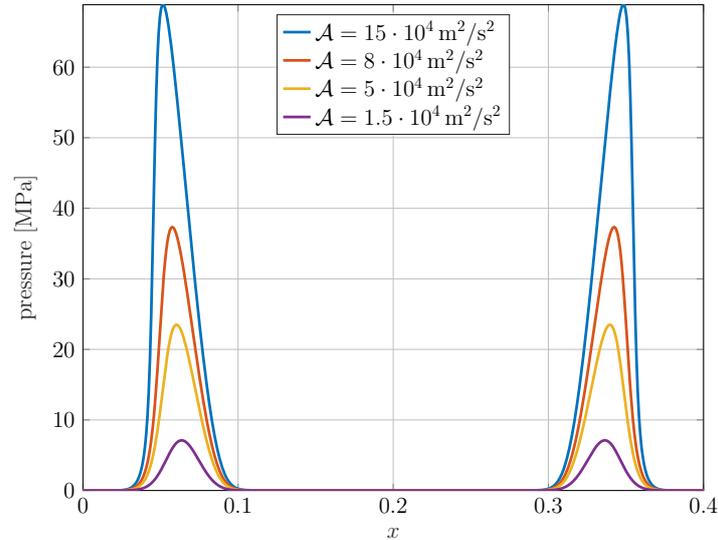}
\caption{Influence of the size of the initial data on the behavior of the model
\label{fig:channel_initial_data_different_size}}
\end{figure}
\subsubsection{Energy decay}
Figure~\ref{fig:channel_initial_data_energy_decay} shows how the energy decays with respect to time. The medium is again taken to be water with coefficients as in \eqref{water_coefficients}, but we vary the sound diffusivity $b$. The initial data is given as before by \eqref{initial_data_experiments} with the high amplitude $\mathcal{A}=3 \cdot 10^5 \,\textup{m}^2/\textup{s}^2$.  As observed in our theoretical considerations, a certain amount of dissipation is needed for the energy to decay with time. When a larger damping is present in the model, the results agree with the exponential decay rates of the energy obtained for the one-dimensional case in Theorem~\ref{theorem:1D}.
\begin{figure}[H]
\input{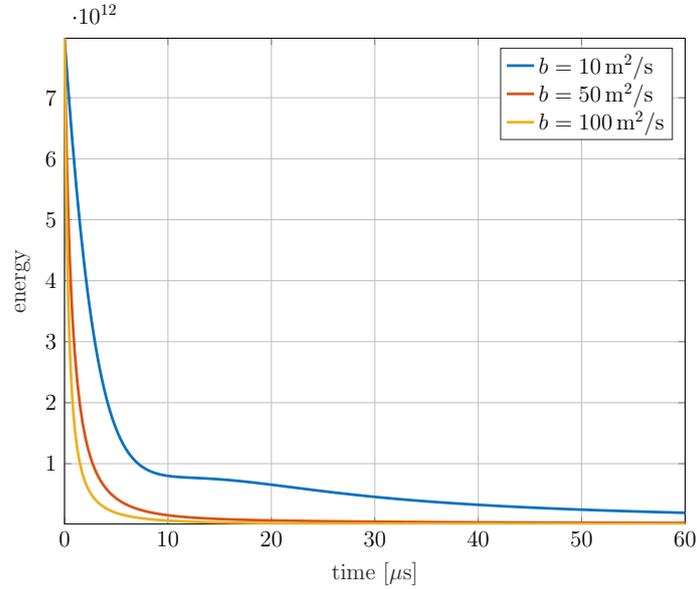}
\caption{Decay of the energy $|\Delta \psi(t)|^2_{L^2}+|\nabla \psi_t(t)|^2_{L^2}$ with respect to time with different dissipation levels \label{fig:channel_initial_data_energy_decay}}
\end{figure}
\subsubsection{Different models} Figure~\ref{fig:channel_comparison_Blackstock_Kuznetsov_Westervelt} displays differences in the pressure profiles obtained by employing the Blackstock, the Kuznetsov \eqref{Kuznetsov_eq}, and the Westervelt equation \eqref{Westervelt_eq} in the same setting of a $1$D channel, where the medium is taken to be water. 
\begin{figure}[H]
\input{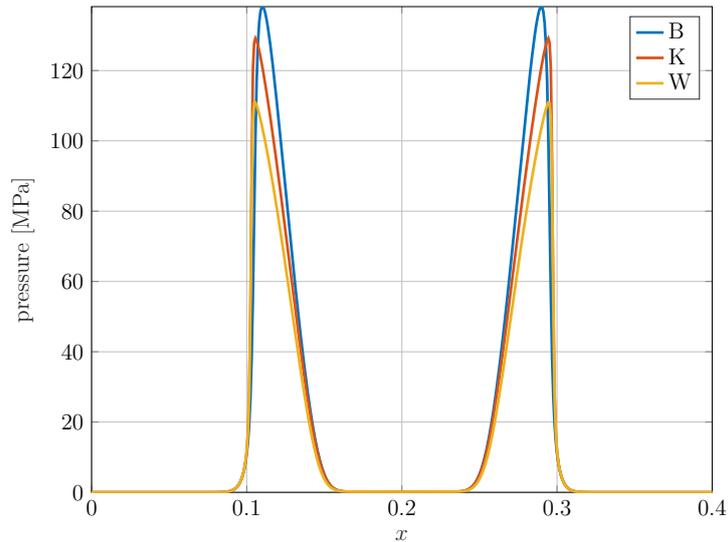}
\caption{Plot of the pressure waves obtained by using the Blackstock (B), the Kuznetsov (K), and the Westervelt (W) equations at $t=50 \, \mu \textup{s}$  \label{fig:channel_comparison_Blackstock_Kuznetsov_Westervelt}}
\end{figure}
\indent The coefficients are taken as in \eqref{water_coefficients} and the initial data as in \eqref{initial_data_experiments} with the amplitude $\mathcal{A}=3 \cdot 10^5 \,\textup{m}^2/\textup{s}^2$. We employed a fixed point iteration with respect to the second time derivative to solve the Kuznetsov and the Westervelt  equations. The average number of iterations per time step needed to converge to a solution was around $5$ for the Blackstock, $14$ for the Kuznetsov, and around $19$ for the Westervelt equation. \\
\indent A difference in the pressure profiles can easily be observed for the high-amplitude ultrasound waves. We refer also to the work in \cite{ChristovJordan}, where these models were compared to the Euler equations in the $1$D non-viscous case. The study showed that the Blackstock equation performed the best, followed by the inviscid Kuznetsov equation. The inviscid Westervelt equation had the poorest performance. 
\subsubsection{Neumann excitation.} Next we take the same water setting as before, but this time with Neumann excitation on the left side of the channel (and homogeneous Neumann conditions on the right end). For the source condition, we take a modulated sinusoidal wave 
 \begin{align} \label{Neumann_excitation}
 	g= \begin{cases} g_0\, \sin(\omega t)\,(1+ \sin(wt/4)) , \ t> 2 \pi/w, \smallskip \\ g_0\, \sin(\omega t), \hspace{25mm}  t \leq 2\pi/w. \end{cases}
 \end{align}
The amplitude of the source is taken to be $g_0=20 \pi \ \textup{m}/\textup{s}$ and the frequency $f=70 \ \textup{kHz}$. The angular frequency is then given by $\omega=2\pi f$. We take $451$ degrees of freedom in space and $1000$ time steps with final time set to $0.12 \ \textup{ms}$. \\
\indent We set $c=1500 \ \textup{m}/\textup{s}$, $b=6 \cdot 10^{-9} \ \textup{m}^2/\textup{s}$, $\varrho=1000 \ \textup{kg}/\textup{m}^3$ as before and now vary the parameter of nonlinearity of the medium. Typical values of this parameter in different media can be found, for instance, in \cite{dunn2015springer}. We take $$B/A \in \{0, 3, 7\}.$$
\begin{figure}[H]
\input{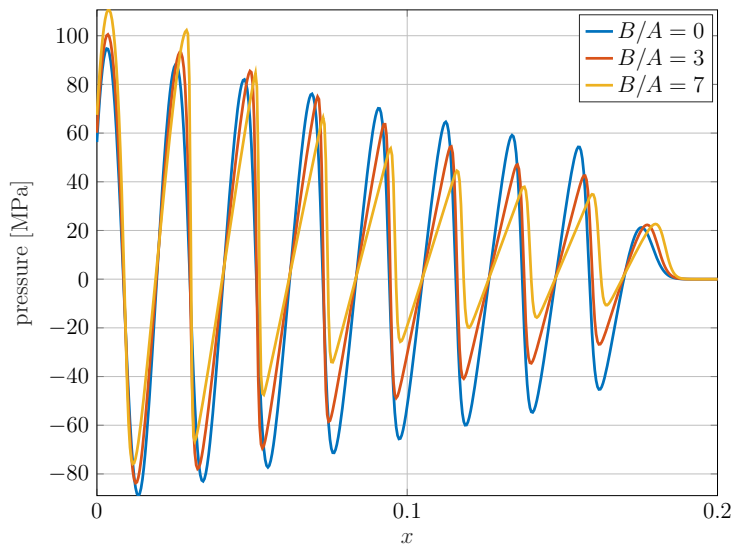}
\caption{Influence of the parameter of nonlinearity $B/A$ on the behavior of the model  \label{fig:channel_different_BA}}
\end{figure}
Figure~\ref{fig:channel_different_BA} illustrates how the wave is affected by the material nonlinearity. As expected, the larger the parameter of nonlinearity is, the closer to the source the steepening of the wave begins. If the data is sufficiently large and not much dissipation is present, eventually the steepening becomes vertical and a wave form with a sawtooth shape develops.
\subsection{High-intensity focused ultrasound}
Finally, we plot the acoustic pressure a two-dimen\-sional domain, corresponding to a high-intensity focused ultrasound (HIFU) setting. The excitation and focusing in HIFU applications is often achieved by an array of transducers placed on a spherical surface; see \cite[Ch. 12]{manfred}. Therefore, for our domain we take a rectangle $[0, 0.08 \, \textup{m}] \times [0, 0.12 \, \textup{m}]$ with a curved lower boundary belonging to a circle with center $(0.04 \, \textup{m}, 0.03 \, \textup{m})$ and radius $R=0.05 \, \textup{m}$.
We employ quadratic splines and take $282 \times 452$ degrees of freedom and $1500$ time steps with the final time set to $0.1 \, \textup{ms}$. The excitation on the curved boundary is again taken to be the modulated sine wave \eqref{Neumann_excitation} with amplitude $g= 20 \, \textup{m}/\textup{s}$ and frequency $f=100 \, \textup{kHz}$, whereas we set Neumann conditions to zero on the other boundaries. \\
\indent Figure \ref{fig:2D_wave} shows the propagation and self-focusing of the pressure wave. Note that due to the lack of the absorbing conditions, reflections can be observed off the sides of the computational domain. In Figure~\ref{fig:pressure_axis}, we can see a snapshot of the pressure changes along the axis of symmetry of the domain. 
%%%%%%%%%%%%%%%%%%%%%%%%%%%%%%%%%%%%
\begin{figure}[H]
\begin{center}
\hspace{-18mm}
\begin{minipage}{8cm}
	\includegraphics[trim = 13cm 0cm 0cm 2cm, clip, scale=0.24]{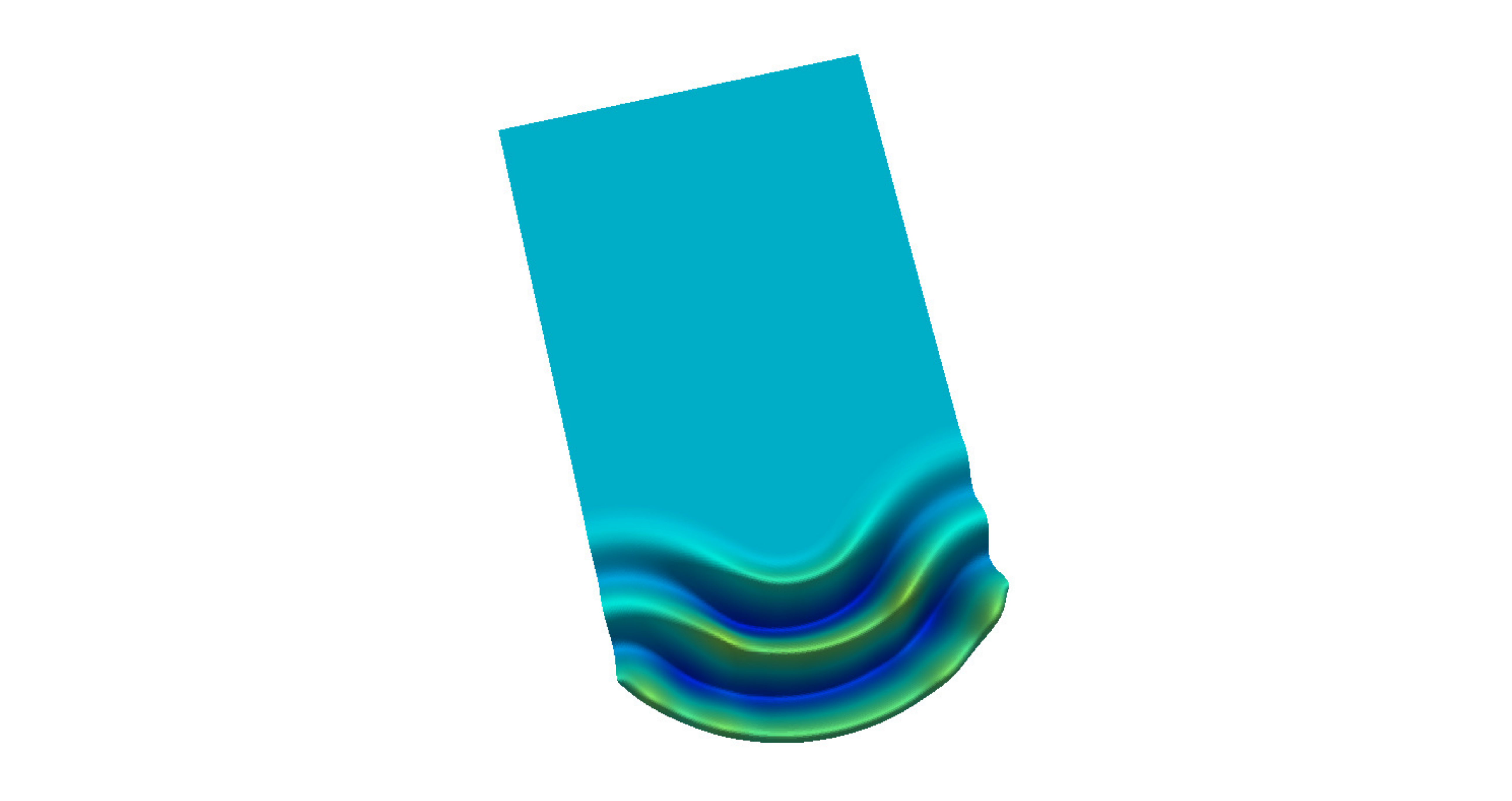}
\end{minipage}
\hspace{-18mm}
\begin{minipage}{6cm}
	\includegraphics[trim = 13cm 0cm 0cm 2cm, clip, scale=0.24]{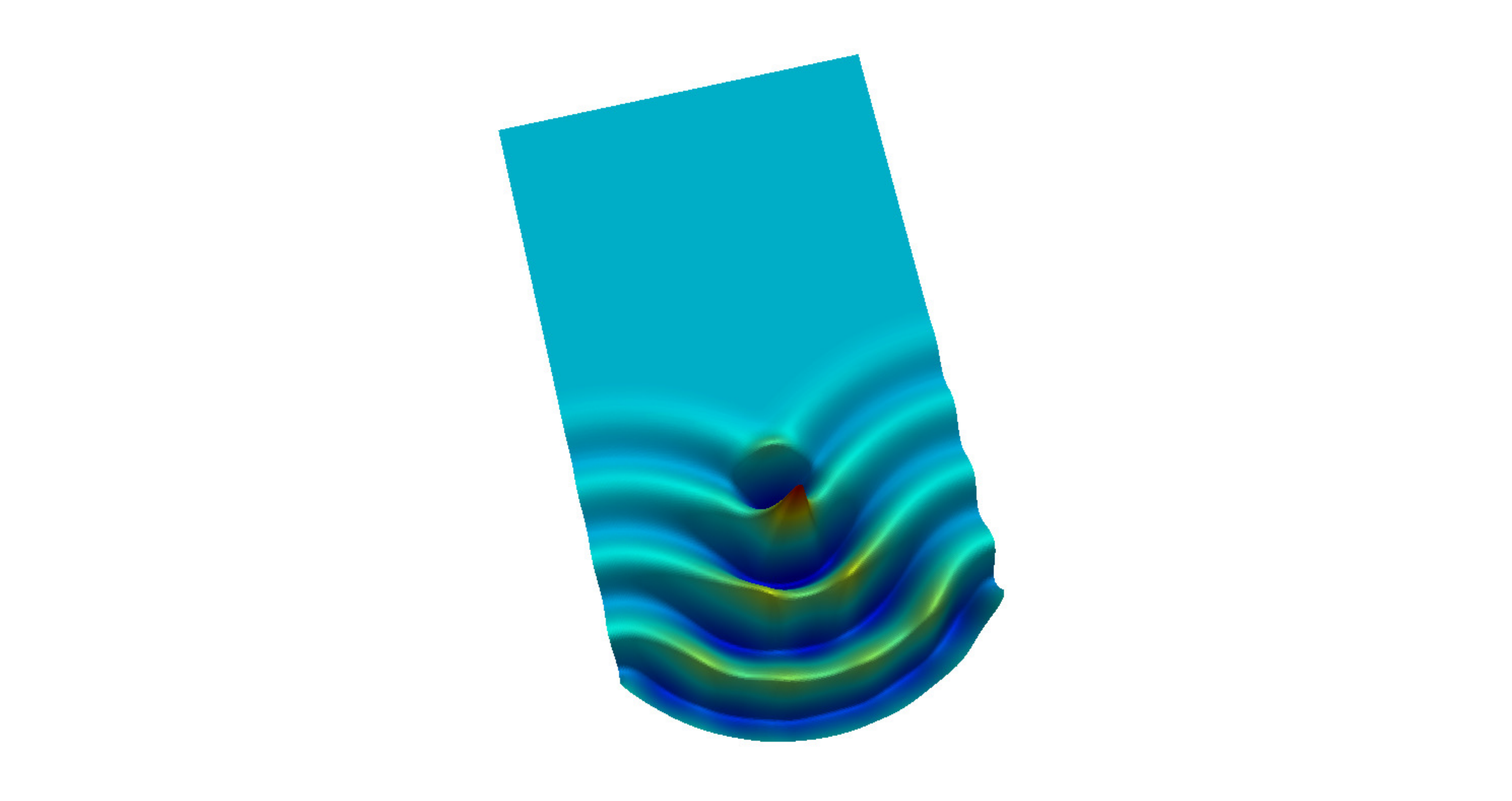}
\end{minipage}
\end{center}
\vspace{-5mm}
\begin{center}
\hspace{-18mm}
\begin{minipage}{8cm}
	\includegraphics[trim = 13cm 0cm 0cm 0cm, clip, scale=0.24]{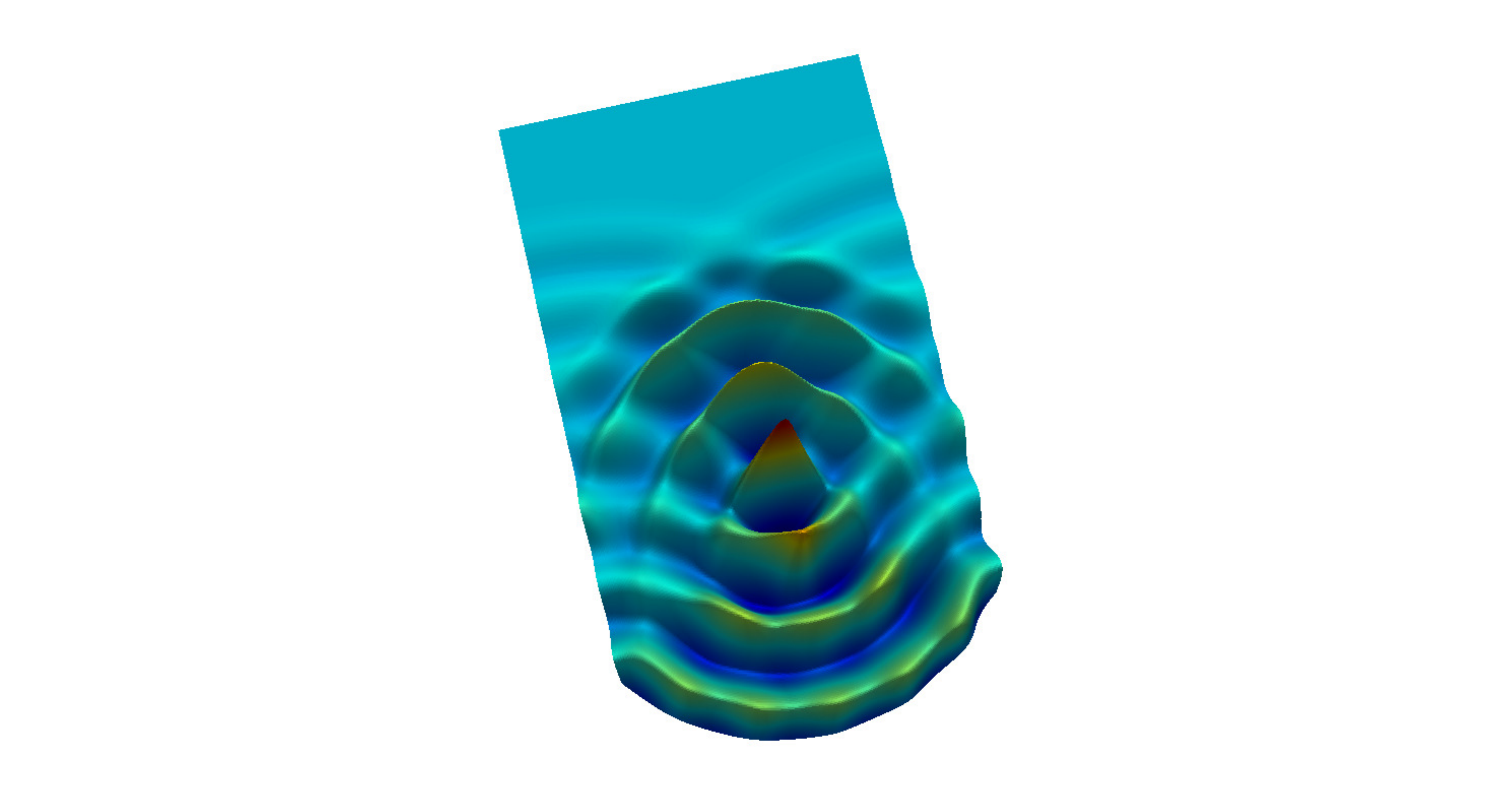}
\end{minipage}
\vspace{-5mm}
\hspace{-18mm}
\begin{minipage}{6cm}
\includegraphics[trim = 13cm 0cm 0cm 0cm, clip, scale=0.24]{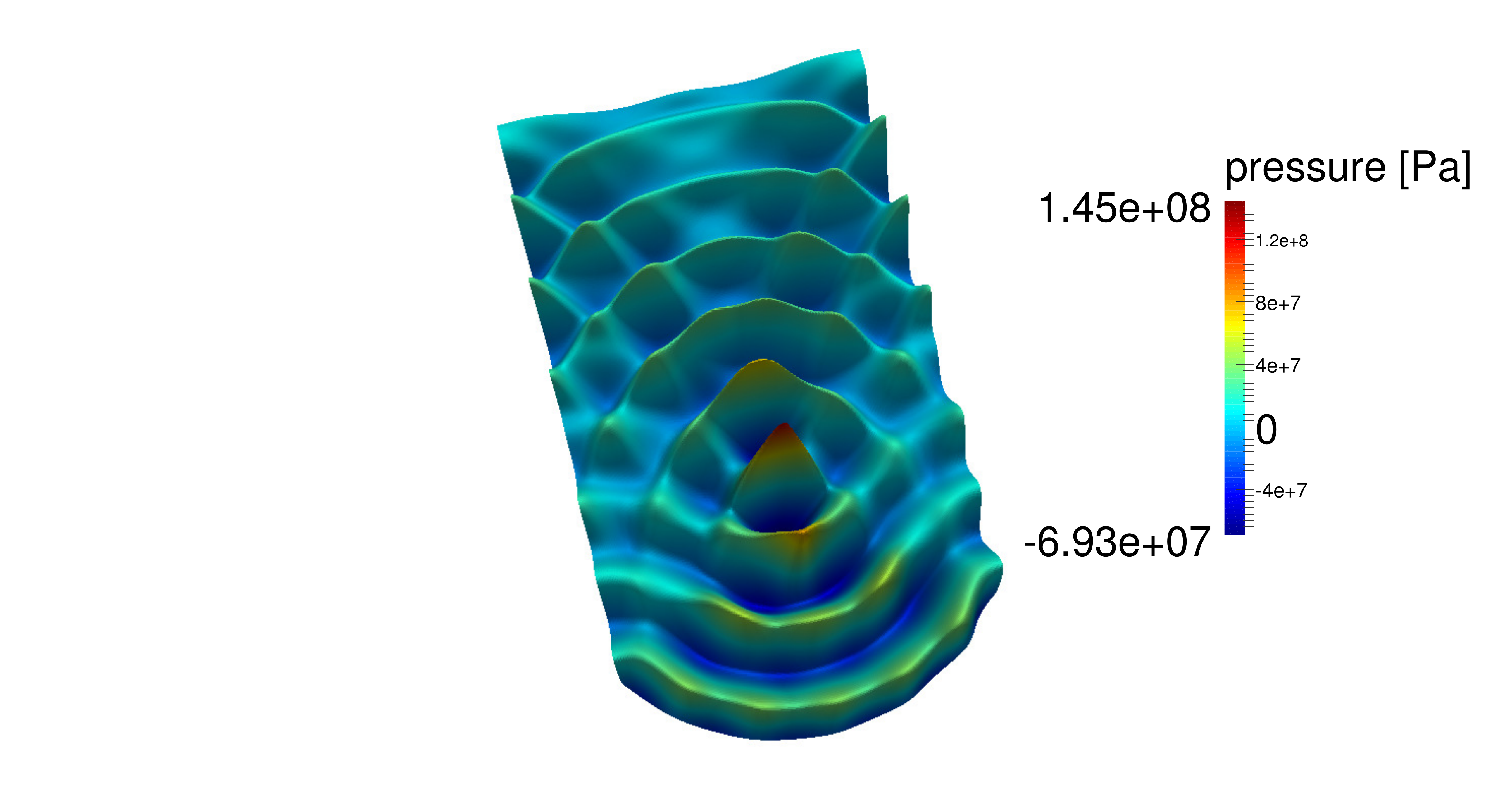}
\end{minipage}
\end{center}
\caption{Evolution of the pressure wave with focusing \label{fig:2D_wave}}
\end{figure}

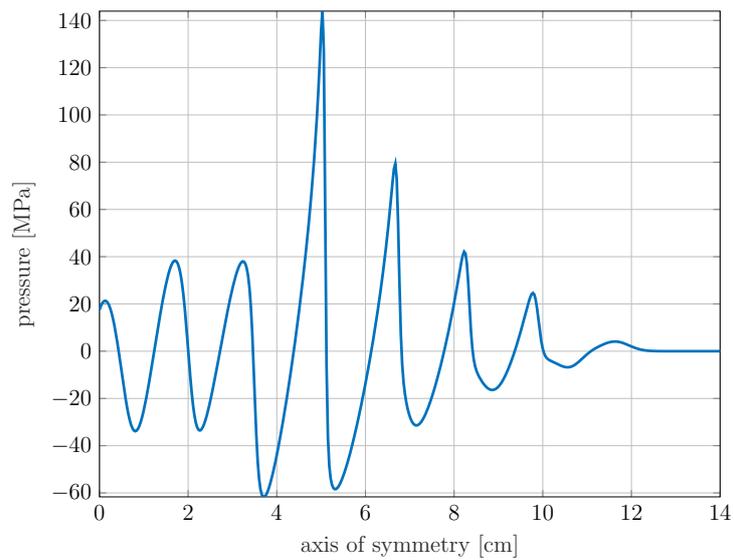
\begin{figure}[H]
\begin{center}
% This file was created by matlab2tikz.
%
\definecolor{mycolor1}{rgb}{0.00000,0.44700,0.74100}%
\begin{tikzpicture}[scale=0.6][font=\LARGE]

\begin{axis}[%
width=5.358in,
height=4.226in,
at={(0.899in,0.57in)},
scale only axis,
xmin=0,
xmax=14, xtick={0, 2, 4, 6, 8, 10, 12, 14},
xlabel style={font=\color{white!15!black}},
xlabel={axis of symmetry [cm]},
ymin=-61.665,
ymax=143.93,
ylabel style={font=\color{white!15!black}},
ylabel={pressure [MPa]},
axis background/.style={fill=white},
xmajorgrids,
ymajorgrids,
legend style={legend cell align=left, align=left, draw=white!15!black}
]
\addplot [color=mycolor1][line width=0.6mm]
  table[row sep=crcr]{%
0	17.486\\
0.0310421286031044	19.11\\
0.0620842572062088	20.307\\
0.0931263858093132	21.057\\
0.124168514412418	21.346\\
0.155210643015522	21.165\\
0.186252771618626	20.509\\
0.217294900221731	19.38\\
0.248337028824835	17.784\\
0.27937915742794	15.735\\
0.310421286031044	13.252\\
0.341463414634148	10.366\\
0.403547671840357	3.55170000000001\\
0.465631929046566	-4.25790000000001\\
0.589800443458984	-20.154\\
0.620842572062088	-23.601\\
0.651884700665192	-26.642\\
0.682926829268297	-29.205\\
0.713968957871401	-31.231\\
0.745011086474506	-32.689\\
0.77605321507761	-33.565\\
0.807095343680714	-33.866\\
0.838137472283819	-33.612\\
0.869179600886923	-32.839\\
0.900221729490028	-31.588\\
0.931263858093132	-29.904\\
0.962305986696236	-27.835\\
0.993348115299341	-25.429\\
1.05543237250555	-19.79\\
1.11751662971176	-13.329\\
1.21064301552107	-2.7559\\
1.36585365853659	15.1\\
1.4279379157428	21.652\\
1.49002217294901	27.517\\
1.55210643015522	32.435\\
1.58314855875832	34.444\\
1.61419068736143	36.095\\
1.64523281596453	37.338\\
1.67627494456764	38.114\\
1.70731707317074	38.358\\
1.73835920177385	37.993\\
1.76940133037695	36.933\\
1.80044345898006	35.082\\
1.83148558758316	32.34\\
1.86252771618626	28.613\\
1.89356984478937	23.838\\
1.92461197339247	18.024\\
1.98669623059868	3.9306\\
2.04878048780489	-10.917\\
2.07982261640799	-17.441\\
2.1108647450111	-22.898\\
2.1419068736142	-27.155\\
2.17294900221731	-30.225\\
2.20399113082041	-32.218\\
2.23503325942352	-33.277\\
2.26607538802662	-33.553\\
2.29711751662973	-33.178\\
2.32815964523283	-32.267\\
2.35920177383593	-30.913\\
2.39024390243904	-29.195\\
2.42128603104211	-27.175\\
2.48337028824832	-22.426\\
2.54545454545453	-16.987\\
2.63858093126385	-8.0307\\
2.88691796008868	16.702\\
2.94900221729489	22.325\\
3.0110864745011	27.434\\
3.07317073170731	31.851\\
3.10421286031041	33.73\\
3.13525498891352	35.342\\
3.16629711751662	36.638\\
3.19733924611972	37.557\\
3.22838137472283	38.014\\
3.25942350332593	37.891\\
3.29046563192904	37.021\\
3.32150776053214	35.155\\
3.35254988913525	31.926\\
3.38359201773835	26.793\\
3.41463414634146	19.054\\
3.44567627494456	8.0821\\
3.50776053215077	-21.494\\
3.53880266075387	-35.766\\
3.56984478935698	-46.754\\
3.60088691796008	-54.084\\
3.63192904656319	-58.46\\
3.66297117516629	-60.76\\
3.6940133037694	-61.665\\
3.7250554323725	-61.617\\
3.7560975609756	-60.895\\
3.78713968957871	-59.678\\
3.81818181818181	-58.078\\
3.84922394678492	-56.174\\
3.91130820399113	-51.654\\
3.97339246119734	-46.398\\
4.03547671840354	-40.562\\
4.12860310421286	-30.913\\
4.22172949002217	-20.33\\
4.31485587583148	-8.87020000000001\\
4.4079822616408	3.4913\\
4.50110864745011	16.862\\
4.59423503325942	31.471\\
4.65631929046563	42.097\\
4.71840354767184	53.671\\
4.78048780487805	66.537\\
4.84257206208426	81.29\\
4.90465631929047	99.022\\
4.93569844789357	109.69\\
4.96674057649668	121.68\\
4.99778270509978	135.2\\
5.02882483370288	143.93\\
5.05986696230599	127.64\\
5.1219512195122	-6.02019999999999\\
5.1529933481153	-36.264\\
5.18403547671841	-48.603\\
5.21507760532151	-53.918\\
5.24611973392462	-56.618\\
5.27716186252772	-57.929\\
5.30820399113082	-58.41\\
5.33924611973393	-58.316\\
5.37028824833703	-57.8\\
5.40133037694014	-56.958\\
5.43237250554324	-55.854\\
5.46341463414635	-54.535\\
5.52549889135256	-51.374\\
5.58758314855876	-47.655\\
5.64966740576497	-43.488\\
5.71175166297118	-38.94\\
5.80487804878049	-31.495\\
5.89800443458981	-23.368\\
5.99113082039912	-14.579\\
6.08425720620843	-5.09800000000001\\
6.17738359201775	5.15799999999999\\
6.27050997782706	16.346\\
6.33259423503327	24.449\\
6.39467849223948	33.206\\
6.45676274944569	42.793\\
6.5188470066519	53.466\\
6.5809312638581	65.44\\
6.61197339246121	71.687\\
6.64301552106431	77.176\\
6.67405764966742	79.338\\
6.70509977827052	73.178\\
6.73614190687363	54.705\\
6.76718403547673	29.156\\
6.79822616407984	7.61060000000001\\
6.82926829268294	-5.65020000000001\\
6.86031042128604	-13.279\\
6.89135254988915	-18.166\\
6.92239467849225	-21.728\\
6.95343680709536	-24.515\\
6.98447893569846	-26.722\\
7.01552106430157	-28.442\\
7.04656319290467	-29.728\\
7.07760532150778	-30.625\\
7.10864745011088	-31.171\\
7.13968957871398	-31.401\\
7.17073170731709	-31.349\\
7.20177383592016	-31.042\\
7.2328159645233	-30.508\\
7.26385809312637	-29.772\\
7.29490022172951	-28.852\\
7.32594235033258	-27.77\\
7.35698447893569	-26.539\\
7.4190687361419	-23.688\\
7.4811529933481	-20.387\\
7.54323725055431	-16.697\\
7.60532150776052	-12.662\\
7.69844789356983	-6.0171\\
7.79157427937915	1.31200000000001\\
7.88470066518846	9.34280000000001\\
7.97782705099777	18.143\\
8.07095343680709	27.791\\
8.1640798226164	37.799\\
8.19512195121951	40.55\\
8.22616407982261	42.123\\
8.25720620842571	41.379\\
8.28824833702882	37.031\\
8.31929046563192	28.714\\
8.38137472283813	8.2594\\
8.41241685144124	1.04669999999999\\
8.44345898004434	-3.55770000000001\\
8.47450110864744	-6.4248\\
8.50554323725055	-8.33449999999999\\
8.53658536585365	-9.7578\\
8.56762749445676	-10.93\\
8.62971175166297	-12.878\\
8.66075388026607	-13.706\\
8.69179600886918	-14.439\\
8.72283813747228	-15.07\\
8.75388026607538	-15.591\\
8.78492239467849	-15.991\\
8.81596452328159	-16.261\\
8.8470066518847	-16.396\\
8.8780487804878	-16.39\\
8.90909090909091	-16.24\\
8.94013303769401	-15.949\\
8.97117516629712	-15.516\\
9.00221729490022	-14.946\\
9.03325942350332	-14.245\\
9.06430155210643	-13.417\\
9.09534368070953	-12.471\\
9.15742793791574	-10.245\\
9.21951219512195	-7.6191\\
9.28159645232816	-4.63740000000001\\
9.34368070953437	-1.3355\\
9.40576496674058	2.25899999999999\\
9.49889135254989	8.15170000000001\\
9.5920177383592	14.554\\
9.68514412416852	20.969\\
9.71618625277162	22.781\\
9.74722838137473	24.121\\
9.77827050997783	24.655\\
9.80931263858093	23.98\\
9.84035476718404	21.771\\
9.87139689578714	18.045\\
9.96452328159646	4.41370000000001\\
9.99556541019956	1.34299999999999\\
10.0266075388027	-0.732609999999994\\
10.0576496674058	-2.05500000000001\\
10.0886917960089	-2.88380000000001\\
10.119733924612	-3.42140000000001\\
10.1507760532151	-3.80189999999999\\
10.2128603104213	-4.381\\
10.3059866962306	-5.20099999999999\\
10.3991130820399	-6.03370000000001\\
10.430155210643	-6.27940000000001\\
10.4611973392461	-6.49029999999999\\
10.4922394678492	-6.6551\\
10.5232815964523	-6.76349999999999\\
10.5543237250554	-6.80690000000001\\
10.5853658536585	-6.7792\\
10.6164079822616	-6.67689999999999\\
10.6474501108648	-6.49930000000001\\
10.6784922394679	-6.24850000000001\\
10.709534368071	-5.929\\
10.7405764966741	-5.5478\\
10.8026607538803	-4.6353\\
10.8957871396896	-3.05160000000001\\
10.9889135254989	-1.46369999999999\\
11.0509977827051	-0.508829999999989\\
11.1130820399113	0.33198999999999\\
11.1751662971175	1.06540000000001\\
11.2372505543237	1.71440000000001\\
11.2993348115299	2.30119999999999\\
11.3614190687362	2.83369999999999\\
11.4235033259424	3.30250000000001\\
11.4545454545455	3.5061\\
11.4855875831486	3.68440000000001\\
11.5166297117517	3.834\\
11.5476718403548	3.95150000000001\\
11.5787139689579	4.03389999999999\\
11.609756097561	4.07910000000001\\
11.6407982261641	4.0855\\
11.6718403547672	4.05289999999999\\
11.7028824833703	3.98169999999999\\
11.7339246119734	3.87360000000001\\
11.7649667405765	3.7312\\
11.7960088691796	3.5581\\
11.8580931263858	3.13820000000001\\
11.9512195121951	2.4049\\
12.0443458980045	1.67609999999999\\
12.1064301552106	1.2501\\
12.1685144124168	0.893540000000002\\
12.1995565410199	0.743380000000002\\
12.230598669623	0.612159999999989\\
12.2616407982262	0.498809999999992\\
12.2926829268293	0.402170000000012\\
12.3237250554324	0.320750000000004\\
12.3547671840355	0.253050000000002\\
12.3858093126386	0.197620000000001\\
12.4168514412417	0.152690000000007\\
12.4478935698448	0.116710000000012\\
12.4789356984479	0.0882310000000075\\
12.509977827051	0.0659609999999873\\
12.5410199556541	0.0488080000000082\\
12.6031042128603	0.0258629999999869\\
12.6651884700665	0.0131020000000035\\
12.7583148558758	0.00435500000000388\\
12.9135254988913	0.000555469999994784\\
13.5033259423503	1.80340009592328e-08\\
14	3.12638803734444e-13\\
};
%\addlegendentry{data1}

\end{axis}
\end{tikzpicture}%
	\end{center}
	\caption{Acoustic pressure along the axis of symmetry at $t=76 \, \mu \textup{s}$ \label{fig:pressure_axis}}
\end{figure}
%%%%%%%%%%%%%%%%%%%%%%%%%%%%%%%%%%%%%%%%%%%%%%%%%%%%%%%%
\section{Conclusion}
In this work, we studied both analytically and numerically the Blackstock equation, which arises in nonlinear acoustics as a model of ultrasound propagation. We showed local-in-time well-posedness of the equation with homogeneous Dirichlet boundary conditions and small initial data. We then proved global well-posedness and exponential decay for the energy of the solution.\\
\indent In addition, we presented a numerical treatment of the equation within the framework of Isogeometric Analysis, where we employed B-Splines as basis functions. Our numerical results confirm the theoretical findings and, in particular, illustrate the influence of the size of the data and the level of dissipation on the behavior of the solution.
%%%%%%%%%%%%%%%%%%%%%%%%%%%%%%%%%%%%%%%%%%%%%%%%%%%%%%%%%%%%%%%%%%%%%%%%%%%%%%%%%%%%%%%%%%%%%%%%%%%%%%%%%%%%%%%%%
\section*{Acknowledgements}
 The authors would like to thank Dr. Pedro Jordan for drawing their attention to the Blackstock equation and Markus Muhr for interesting discussions. We also thank Professor Amiya K. Pani for pointing out a mistake in a previous version of this manuscript. The funds provided by the Deutsche Forschungsgemeinschaft under the grant number WO 671/11-1 are gratefully acknowledged. 
%%%%%%%%%%%%%%%%%%%%%%%%%%%%%%% BIBLIOGRAPHY %%%%%%%%%%%%%%%%%%%%%%%%%%%%%%%%%%%%%%
\bibliography{references}{}

\begin{thebibliography}{10}

\bibitem{berenger1994perfectly}
{\sc J.-P. Berenger}, {\em A perfectly matched layer for the absorption of
  electromagnetic waves}, Journal of Computational Physics, 114 (1994),
  pp.~185--200.

\bibitem{Blackstock}
{\sc D.~T. Blackstock}, {\em Approximate equations governing finite-amplitude
  soud in thermoviscous fluids}, {T}echnical {R}eport, General Dynamics GD/E
  Report GD-1463-52, 1963.

\bibitem{blana2004high}
{\sc A.~Blana, B.~Walter, S.~Rogenhofer, and W.~F. Wieland}, {\em
  High-intensity focused ultrasound for the treatment of localized prostate
  cancer: 5-year experience}, Urology, 63 (2004), pp.~297--300.

\bibitem{chaussy1980extracorporeally}
{\sc C.~Chaussy, W.~Brendel, and E.~Schmiedt}, {\em Extracorporeally induced
  destruction of kidney stones by shock waves}, The Lancet, 316 (1980),
  pp.~1265--1268.

\bibitem{ChristovJordan}
{\sc I.~Christov, C.~Christov, and P.~Jordan}, {\em Modeling weakly nonlinear
  acoustic wave propagation}, The Quarterly Journal of Mechanics \& Applied
  Mathematics, 60 (2007), pp.~473--495.

\bibitem{ChristovJordan_Corrigendum}
{\sc I.~Christov, C.~Christov, and P.~Jordan}, {\em Corrigendum and addendum:
  Modeling weakly nonlinear acoustic wave propagation}, Quarterly Journal of
  Mechanics and Applied Mathematics, 68 (2014), pp.~231--233.

\bibitem{christov2016acoustic}
{\sc I.~Christov, P.~Jordan, S.~Chin-Bing, and A.~Warn-Varnas}, {\em Acoustic
  traveling waves in thermoviscous perfect gases: {K}inks, acceleration waves,
  and shocks under the {T}aylor--{L}ighthill balance}, Mathematics and
  Computers in Simulation, 127 (2016), pp.~2--18.

\bibitem{chung1993time}
{\sc J.~Chung and G.~Hulbert}, {\em A time integration algorithm for structural
  dynamics with improved numerical dissipation: {T}he {G}eneralized-$\alpha$
  method}, Journal of Applied Mechanics, 60 (1993), pp.~371--375.

\bibitem{coluccia2014first}
{\sc D.~Coluccia, J.~Fandino, L.~Schwyzer, R.~O’Gorman, L.~Remonda, J.~Anon,
  E.~Martin, and B.~Werner}, {\em First noninvasive thermal ablation of a brain
  tumor with {MR}-guided focused ultrasound}, Journal of Therapeutic
  Ultrasound, 2 (2014), p.~17.

\bibitem{cottrell2009isogeometric}
{\sc J.~A. Cottrell, T.~J. Hughes, and Y.~Bazilevs}, {\em Isogeometric
  analysis: Toward integration of {CAD} and {FEA}}, John Wiley \& Sons, 2009.

\bibitem{crighton1979model}
{\sc D.~G. Crighton}, {\em Model equations of nonlinear acoustics}, Annual
  Review of Fluid Mechanics, 11 (1979), pp.~11--33.

\bibitem{davies1978application}
{\sc A.~Davies}, {\em Application of the {G}alerkin method to the solution of
  {B}urgers' equation}, Computer Methods in Applied Mechanics and Engineering,
  14 (1978), pp.~305--321.

\bibitem{dunn2015springer}
{\sc F.~Dunn, W.~Hartmann, D.~Campbell, and N.~H. Fletcher}, {\em Springer
  handbook of acoustics}, Springer, 2015.

\bibitem{ebihara1979some}
{\sc Y.~Ebihara}, {\em On some nonlinear evolution equations with the strong
  dissipation, {I}{I}}, Journal of Differential Equations, 34 (1979),
  pp.~339--352.

\bibitem{engquist1977absorbing}
{\sc B.~Engquist and A.~Majda}, {\em Absorbing boundary conditions for
  numerical simulation of waves}, Proceedings of the National Academy of
  Sciences, 74 (1977), pp.~1765--1766.

\bibitem{Evans}
{\sc L.~C. Evans}, {\em Partial differential equations}, vol.~19 of Graduate
  Studies in Mathematics, American Mathematical Society, 2010.

\bibitem{garcke2017well}
{\sc H.~Garcke and K.~F. Lam}, {\em Well-posedness of a {C}ahn--{H}illiard
  system modelling tumour growth with chemotaxis and active transport},
  European Journal of Applied Mathematics, 28 (2017), pp.~284--316.

\bibitem{grisvard2011elliptic}
{\sc P.~Grisvard}, {\em Elliptic problems in nonsmooth domains}, vol.~69 of
  Classics in Applied Mathematics, SIAM, 2011.

\bibitem{hoffelner2002simulation}
{\sc J.~Hoffelner}, {\em Simulation, Erzeugung und Anwendung von hochintensivem
  Ultraschall}, VDI-Verlag, 2002.

\bibitem{Hoffelner}
{\sc J.~Hoffelner, H.~Landes, M.~Kaltenbacher, and R.~Lerch}, {\em Finite
  element simulation of nonlinear wave propagation in thermoviscous fluids
  including dissipation}, IEEE Transactions on Ultrasonics, Ferroelectrics, and
  Frequency Control, 48 (2001), pp.~779--786.

\bibitem{hughes2005isogeometric}
{\sc T.~J. Hughes, J.~A. Cottrell, and Y.~Bazilevs}, {\em Isogeometric
  analysis: {CAD}, finite elements, {NURBS}, exact geometry and mesh
  refinement}, Computer Methods in Applied Mechanics and Engineering, 194
  (2005), pp.~4135--4195.

\bibitem{ikeda2006cloud}
{\sc T.~Ikeda, S.~Yoshizawa, M.~Tosaki, J.~S. Allen, S.~Takagi, N.~Ohta,
  T.~Kitamura, and Y.~Matsumoto}, {\em Cloud cavitation control for lithotripsy
  using high intensity focused ultrasound}, Ultrasound in Medicine and Biology,
  32 (2006), pp.~1383--1397.

\bibitem{jordan2016survey}
{\sc P.~Jordan}, {\em A survey of weakly-nonlinear acoustic models:
  1910--2009}, Mechanics Research Communications, 73 (2016), pp.~127--139.

\bibitem{jordan2012propagation}
{\sc P.~Jordan, G.~Norton, S.~Chin-Bing, and A.~Warn-Varnas}, {\em On the
  propagation of nonlinear acoustic waves in viscous and thermoviscous fluids},
  European Journal of Mechanics-B/Fluids, 34 (2012), pp.~56--63.

\bibitem{kagawa1992finite}
{\sc Y.~Kagawa, T.~Tsuchiya, T.~Yamabuchi, H.~Kawabe, and T.~Fujii}, {\em
  Finite element simulation of non-linear sound wave propagation}, Journal of
  sound and vibration, 154 (1992), pp.~125--145.

\bibitem{KaltenbacherLasiecka_Westervelt}
{\sc B.~Kaltenbacher and I.~Lasiecka}, {\em Global existence and exponential
  decay rates for the {W}estervelt equation}, Discrete and Continuous Dynamical
  Systems Series S, 2 (2009), pp.~503--523.

\bibitem{KaltenbacherLasiecka_both}
\leavevmode\vrule height 2pt depth -1.6pt width 23pt, {\em Well-posedness of
  the {W}estervelt and the {K}uznetsov equation with nonhomogeneous {N}eumann
  boundary conditions}, in Dynamical Systems and Differential Equations,
  Proceedings of the 8th AIMS International Conference, 2011, pp.~763--773.

\bibitem{KaltenbacherLasiecka_Kuznetsov}
\leavevmode\vrule height 2pt depth -1.6pt width 23pt, {\em An analysis of
  nonhomogeneous {K}uznetsov's equation: Local and global well-posedness;
  exponential decay}, Mathematische Nachrichten, 285 (2012), pp.~295--321.

\bibitem{kaltenbacher2014efficient}
{\sc B.~Kaltenbacher, V.~Nikoli\'c, and M.~Thalhammer}, {\em Efficient time
  integration methods based on operator splitting and application to the
  {W}estervelt equation}, IMA Journal of Numerical Analysis, 35 (2014),
  pp.~1092--1124.

\bibitem{manfred}
{\sc M.~Kaltenbacher}, {\em Numerical simulation of mechatronic sensors and
  actuators}, Springer, 2015.

\bibitem{lesser1968structure}
{\sc M.~B. Lesser and R.~Seebass}, {\em The structure of a weak shock wave
  undergoing reflexion from a wall}, Journal of Fluid Mechanics, 31 (1968),
  pp.~501--528.

\bibitem{lions1969quelques}
{\sc J.-L. Lions}, {\em Quelques m{\'e}thodes de r{\'e}solution des problemes
  aux limites non lin{\'e}aires}, Dunod, 1969.

\bibitem{lions1972non}
{\sc J.-L. Lions and E.~Magenes}, {\em Non-homogeneous boundary value problems
  I}, vol.~181 of Die Grundlehren der mathematischen Wissenschaften, Springer,
  1972.

\bibitem{maloney2015emerging}
{\sc E.~Maloney and J.~H. Hwang}, {\em Emerging {HIFU} applications in cancer
  therapy}, International Journal of Hyperthermia, 31 (2015), pp.~302--309.

\bibitem{meyer2011optimal}
{\sc S.~Meyer and M.~Wilke}, {\em Optimal regularity and long-time behavior of
  solutions for the {W}estervelt equation}, Applied Mathematics \&
  Optimization, 64 (2011), pp.~257--271.

\bibitem{meyer2012global}
\leavevmode\vrule height 2pt depth -1.6pt width 23pt, {\em Global
  well-posedness and exponential stability for {K}uznetsov's equation in
  ${L}_p$-spaces}, Evolution Equations \& Control Theory, 2 (2013),
  pp.~365--378.

\bibitem{MizohataUkai}
{\sc K.~Mizohata and S.~Ukai}, {\em The global existence of small amplitude
  solutions to the nonlinear acoustic wave equation}, Journal of Mathematics of
  Kyoto University, 33 (1993), pp.~505--522.

\bibitem{muhr2017isogeometric}
{\sc M.~Muhr, V.~Nikoli{\'c}, B.~Wohlmuth, and L.~Wunderlich}, {\em
  Isogeometric shape optimization for nonlinear ultrasound focusing}, Evolution
  Equations \& Control Theory, to appear, arXiv preprint arXiv:1712.05228.

\bibitem{newmark1959method}
{\sc N.~M. Newmark}, {\em A method of computation for structural dynamics},
  Journal of the engineering mechanics division, 85 (1959), pp.~67--94.

\bibitem{poissonnier2007control}
{\sc L.~Poissonnier, J.-Y. Chapelon, O.~Rouviere, L.~Curiel, R.~Bouvier,
  X.~Martin, J.~M. Dubernard, and A.~Gelet}, {\em Control of prostate cancer by
  transrectal {HIFU} in 227 patients}, European Urology, 51 (2007),
  pp.~381--387.

\bibitem{qin2016analytic}
{\sc Y.~Qin}, {\em Analytic Inequalities and their Applications in {PDE}s},
  vol.~241 of Operator Theory: Advances and Applications, Springer Science \&
  Business Media, 2016.

\bibitem{rassmusen2011interacting}
{\sc A.~R. Rassmusen, M.~P. S{\o}rensen, Y.~B. Gaididei, and P.~L.
  Christiansen}, {\em Interacting wave fronts and rarefaction waves in a second
  order model of nonlinear thermoviscous fluids}, Acta applicandae
  mathematicae, 115 (2011), pp.~43--61.

\bibitem{robinson2001infinite}
{\sc J.~C. Robinson}, {\em Infinite-dimensional dynamical systems: An
  introduction to dissipative parabolic {PDE}s and the theory of global
  attractors}, vol.~28 of Cambridge Texts in Applied Mathematics, Cambridge
  University Press, 2001.

\bibitem{Roubicek}
{\sc T.~Roub{\'\i}{\v{c}}ek}, {\em Nonlinear partial differential equations
  with applications}, vol.~153 of International Series of Numerical
  Mathematics, Springer Science \& Business Media, 2013.

\bibitem{shevchenko2015absorbing}
{\sc I.~Shevchenko and B.~Kaltenbacher}, {\em Absorbing boundary conditions for
  nonlinear acoustics: The {W}estervelt equation}, Journal of Computational
  Physics, 302 (2015), pp.~200--221.

\bibitem{simon1986compact}
{\sc J.~Simon}, {\em Compact sets in the space ${L}^p({0},{T};{B})$}, Annali di
  Matematica pura ed applicata, 146 (1986), pp.~65--96.

\bibitem{Tani}
{\sc A.~Tani}, {\em Mathematical analysis in nonlinear acoustics}, in AIP
  Conference Proceedings, vol.~1907, AIP Publishing, 2017.

\bibitem{Temam}
{\sc R.~Temam}, {\em Infinite-dimensional dynamical systems in mechanics and
  physics}, vol.~68 of Applied Mathematical Sciences, Springer Science \&
  Business Media, 2012.

\bibitem{vazquez2016new}
{\sc R.~V{\'a}zquez}, {\em A new design for the implementation of isogeometric
  analysis in {O}ctave and {M}atlab: {G}eo{PDE}s 3.0}, Computers \& Mathematics
  with Applications, 72 (2016), pp.~523--554.

\bibitem{walsh2007finite}
{\sc T.~Walsh and M.~Torres}, {\em Finite element methods for nonlinear
  acoustics in fluids}, Journal of Computational Acoustics, 15 (2007),
  pp.~353--375.

\bibitem{yang1991high}
{\sc R.~Yang, C.~R. Reilly, F.~J. Rescorla, P.~R. Faught, N.~T. Sanghvi, F.~J.
  Fry, T.~D. Franklin, L.~Lumeng, and J.~L. Grosfeld}, {\em High-intensity
  focused ultrasound in the treatment of experimental liver cancer}, Archives
  of Surgery, 126 (1991), pp.~1002--1010.

\bibitem{yoshizawa2009high}
{\sc S.~Yoshizawa, T.~Ikeda, A.~Ito, R.~Ota, S.~Takagi, and Y.~Matsumoto}, {\em
  High intensity focused ultrasound lithotripsy with cavitating microbubbles},
  Medical \& Biological Engineering \& Computing, 47 (2009), pp.~851--860.

\bibitem{zhu2009numerical}
{\sc C.-G. Zhu and R.-H. Wang}, {\em Numerical solution of {B}urgers’
  equation by cubic {B}-spline quasi-interpolation}, Applied Mathematics and
  Computation, 208 (2009), pp.~260--272.

\end{thebibliography}
\bibliographystyle{siam} 
\end{document}